\numberwithin{equation}{section}
\newtheorem{corollary}{Corollary}
\newtheorem{physical conclusion}{Physical Conclusion}
\newtheorem{remark}{Remark}[section]
\title{Analyzing Turing's Systems via Dynamic Bifurcation Theory}
\author{Xige Yang and Dapeng Li
}
\begin{document}
\date{}
\maketitle
\begin{abstract}
\footnotesize{In this paper, we introduce a novel approach to study reaction diffusion systems -- dynamic transition theory approach developed in \cite{Ma2015}. This approach generalizes Turing's classical result (linear stability analysis) on pattern formation and cast some new insights into Turing's systems. Specifically, we studied the Turing's instability and dynamic transition phenomenon for a Turing's system, and expressions of the critical parameters $L(D_{u},D_{v})$,and $D_{v}^{*}$ are derived. These two simple parameters are sufficient to provide us enough information on the Turing's instability result as well as the dynamic transition behavior of the system. As an application, based on the method we establish in this paper, we found that the Schnakenberg system has two different transition types : single real eigenvalue transition and double real eigenvalues transition. These transition types are interpreted using phase diagrams.}
\end{abstract}
\tableofcontents

\section{Introduction}\label{sec:intro}
Patterns are universal phenomena in physics, chemistry, biology, geography, economics, and even sociology. In biology, there had been lots studies on pattern formation since Alan Turing published his celebrated paper \textit{The Chemical Basis of Morphogenesis}, which put forth a mathematical model for spatial pattern formation. Since then, the mathematical model, now known as a reaction-diffusion system, states a stable equilibrium solution without diffusion may become unstable because of diffusion.

One of the most popular methods to understand biological pattern formation is by using reaction diffusion systems, first introduced by Alan Turing in 1952 \cite{Turing1952}. In his paper, Turing introduced the concept of "pre-pattern" as a precursor of the real pattern we observe, and hypothesized that these spatial pre-patterns are generated by biochemicals, which he called morphogen. He did lots of experiments on his own, mixing morphogens in a well-stirred system, and found that the uniform steady state is stable to small perturbations. Moreover, he showed that two different morphogens in the same system can produce unstable patterns, now known as diffusion-driven instability (DDI). That is, under suitable choices of parameters, the homogenous steady state of the system will lose its stability.

Turing's work had a great influence on interdisciplinary subjects such as mathematical biology, biophysics, non-equilibrium physical chemistry and complexity science. For example, in 1960s, IIya Prigogine and his collaborators followed up Turing's work, formulated and analyzed a model for the Belousov-Zhabotinsky reaction, which is found in the 1950s an now known as a classical example of a self-organizing chemical reaction \cite{Winfree1984}. Fig. \ref{fig:BZ_exp} is an chemical experiment of Belousov-Zhabotinsky reaction, and Fig. \ref{fig:BZ} is a numerical simulation of BZ reaction using at different times (pictures generated by \textit{MATLAB}). In Prigogine's work, they proposed a mathematical model called \textit{Brusselator model} to explain how Turing's Pattern is generated, and their model also explained why the spontaneous creation of order (or spontaneous symmetry breaking) is not forbidden by the Second Law of Thermodynamics \cite{Prigogine1985, Pechenkin2009}. This work This work earned him the 1977 Nobel Prize in Chemistry.

 Fig. \ref{fig:BZ_exp} is an chemical experiment of Belousov-Zhabotinsky reaction, and Fig. \ref{fig:BZ} is a numerical simulation of BZ reaction using at different times (pictures generated by \textit{MATLAB}).
 \begin{figure}[!ht]
 	\centering
 	\includegraphics[width = .8\textwidth]{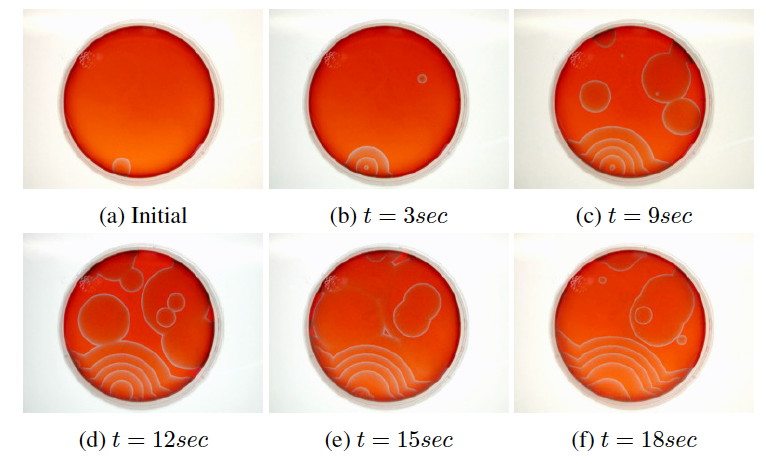}
 	\caption{An experiment of BZ reaction. Rotational chemical wave is observed in seconds, indicating chemical oscillation. Details of the experiment can be found at \href{http://www.pojman.com/NLCD-movies/NLCD-movies.html}{http://www.pojman.com/NLCD-movies/NLCD-movies.html}.}
	\label{fig:BZ_exp}
 \end{figure}

 \begin{figure}[!ht]
 	\centering
	\includegraphics[width = .8\textwidth]{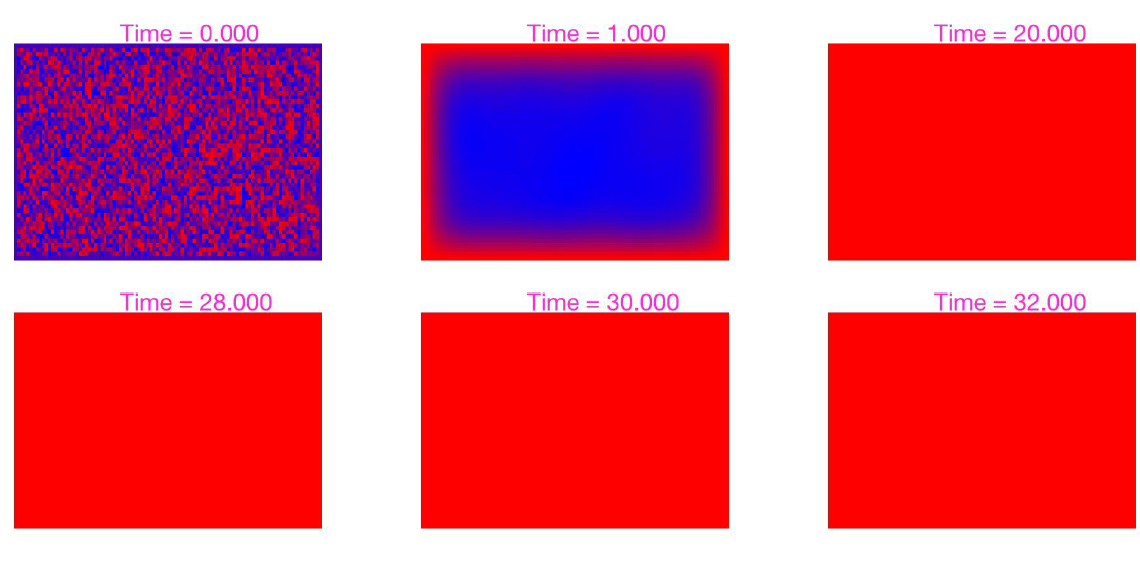}
 	\caption{A numerical simulation of BZ reaction using the Brusselator model at different time slice (time is nondimensionalized). Under a suitable set of parameters, rotation chemical waves are reproduced. The simulation is achieved combining method of line (MOL) with central difference spatial scheme.}
	\label{fig:BZ}
 \end{figure}
 
 Latter developments of Belousov-Zhabotinsky reaction are more or less extensions of Prigogine's pioneering works. For example, in Fields et. al developed a model involving 11 reactions and 12 species \cite{Field1974}, and can be further reduced in a equation system of three species \cite{Ma2011a, Ma2015}. In 1972, Alfred Gierer and Hans Meinhardt introduced the concept of activator-inhibitor system, a special kind of reaction diffusion system \cite{Gierer1972, Meinhardt1981}.  After linear analysis of the Brusselator model, they noticed that only activator-inhibitor type system may generate Turing's instability and pattern formation. Such activator-inhibitor system with the Neumann boundary condition is given and analysised in \cite{shi2009}, where $u$ is the activator satisfying $f'_{u} >0$, and $v$ is the inhibitor satisfying  $g'_{v} <0$. Besides, we also refer the readers to \cite{Maini2000}, which provided us with necessary conditions for instability of a higher-dimensional Turing's system.

Other researching directions on Turing's pattern consider more on agent-based models as well as stochastic mechanism. For example, \cite{Woolley2011b} partitioned the whole domain into small subdomains, and built up a master equation system based on these subdomains to interpret how domain size can affect the behavior of Turing's pattern.\cite{Woolley2011a} studied the robustness property of the pattern on each subdomain by applying spectral method on external noise. Results of both papers are validated by numerical methods, and summarized in a more general review article \cite{Maini2012}. 

On the other hand, the dynamic transition theory developed in \cite{Ma2015}, which is established by Ma and Wang, is a powerful mathematical tool to study the nonlinear dissipative system. Based on the dynamical phase transition theory, Ma and Wang have studied more than twenty kinds of phase transition phenomena, including the Taylor's instability, Rayleigh-B\'enard convection in the fluid dynamics, and ENSO phenomenon in atmospheric circulation. Based on the theory, we study the phase transition phenomena of the Schnakenberg system.

This paper is organized as follows. In Section \ref{sec:2}, we review some previous results and declare our main works. In Section  \ref{sec:3}, the dynamic transition theory will be introduced. In Section \ref{sec:turing}, the general method of analyzing Turing's systems will be addressed, including the necessity and sufficiency condition for Turing's instability, the method to derive the critical parameter, the classification of dynamic transitions, and center manifold reduction result. In the Section \ref{sec:5}, based on the method established in Section \ref{sec:turing}, we derive the critical parameter $D_{v}^{*}$ of the Schnakenberg system. Due to the principle of exchanges of stabilities of this system, it tells us in a clear way that under which condition the Schnakenberg system can generate the Turing's instability, which types of dynamic transitions the system may contain, and the physical meanings of the dynamic transitions.


\section{Statement of the problem and main results}\label{sec:2}
\subsection{Turing's instability}\label{sec:Turing}
The kind of instability driven by diffusion, which can be widely found in the activator-inhibitor systems, is so-called Turing's instability. Turing firstly noticed that such activator-inhibitor system can generate a stationary pattern, if the inhibitor diffuses faster than the activator. For over a half century, Turing's instability and pattern formation has been studied widely. In this paper, we study the following system
\bea
\begin{cases}
   \pd{u}{t}=D_{u}\Delta u + f(u,v),\\[10pt]
   \pd{v}{t}=D_{v}\Delta  v + g(u,v).
\end{cases}\label{eq:model}
\eea
where $u$ is the concentration of a short-range autocatalytic substance (i.e. activator), and $v$ is the long-range antagonist (i.e. inhibitor) of $u$.

 We obtain the necessary and sufficient condition of Turing's instability for the system \eqref{eq:model}.
The critical numbers $L_{1}$ and $D_{v}^{*}$ reflecting Turing's instability, which are determined by diffusion coefficient $D_{u}$ and $D_{v}$, are derived. These critical numbers $L_{1}$ and $D_{v}^{*}$ can clearly tell us in which condition the system can generate the Turing's instability, and which type of transition it has. The study in this paper can also point out that the phase transition of Turing's instability has two types-single real eigenvalue transition and double real eigenvalues transition.


\section{Statement of the problem and main results}\label{sec:2}
\subsection{Turing's instability}\label{sec:Turing}
The kind of instability driven by diffusion, which can be widely found in the activator-inhibitor systems, is so-called Turing's instability. Turing firstly noticed that such activator-inhibitor system can generate a stationary pattern, if the inhibitor diffuses faster than the activator. For over a half century, Turing's instability and pattern formation has been studied widely. In this paper, we study the following system
\bea
\begin{cases}
   \pd{u}{t}=D_{u}\Delta u + f(u,v),\\[10pt]
   \pd{v}{t}=D_{v}\Delta  v + g(u,v).
\end{cases}\label{eq:model}
\eea
where $u$ is the concentration of a short-range autocatalytic substance (i.e. activator), and $v$ is the long-range antagonist (i.e. inhibitor) of $u$.

 We obtain the necessary and sufficient condition of Turing's instability for the system \eqref{eq:model}.
The critical numbers $L_{1}$ and $D_{v}^{*}$ reflecting Turing's instability, which are determined by diffusion coefficient $D_{u}$ and $D_{v}$, are derived. These critical numbers $L_{1}$ and $D_{v}^{*}$ can clearly tell us in which condition the system can generate the Turing's instability, and which type of transition it has. The study in this paper can also point out that the phase transition of Turing's instability has two types-single real eigenvalue transition and double real eigenvalues transition.


\subsection{A glance into main results}
In \cite{Maini2000} studying Turning's instabilities, the authors derived the following necessary condition for Turing's instability. In their context, they considered the following n-dimensional reaction-diffusion system:
\bea
\begin{aligned}
&\pd{u_j}{t} = D_j \Delta u_j + f_j(u), \\
&u = (u_1, u_2, ... u_n), x \in \mathbb{R}, t\in [0,T), T>0 \nonumber
\end{aligned}\label{eq:Maini_1}
 \eea
 with initial condition
 \bea
 u_j(x,0) = u_{j0}(x), \quad j = 1,2,...,n.
 \eea
and the associated linearized equation of \eqref{eq:Maini_1} is 
\bea
u_t = Au, \quad u(x,0) = u_0(x)	\label{eq:Maini_2}
\eea
where $A = \{a_{ij}\}_{1\le i,j \le n}$ then we have the following result with respect to \eqref{eq:Maini_2} as follows:
\begin{thm}[Satnoianu et. al 2000] \label{thm:maini}
If the kinetic system \eqref{eq:Maini_2} of the problem \eqref{eq:Maini_1} is s-stable then no
Turing bifurcation is possible from the uniform steady state solution $u_s$ for any $n \ge 1$.
\end{thm}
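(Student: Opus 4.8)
The plan is to unwind the definition of $s$-stability and observe that the diffusion term perturbs the kinetic matrix $A$ only by a non-negative diagonal matrix, so that $s$-stability forbids any diffusion-driven loss of stability.

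First I would record the dispersion relation. Linearizing \eqref{eq:Maini_1} about the uniform steady state $u_s$ and decomposing a perturbation into spatial Fourier modes $\propto e^{ikx}$ (a Fourier integral over $k\in\mathbb{R}$, or a Fourier series over the admissible wavenumbers if one instead works on a bounded interval with Neumann or periodic conditions), each mode amplitude $c_k(t)$ solves $\dot c_k = M(k)\,c_k$, where
\[
M(k) \;=\; A \;-\; k^{2} D, \qquad D = \mathrm{diag}(D_1,\dots,D_n),\quad D_j>0 .
\]
By definition, a Turing bifurcation from $u_s$ is a \emph{diffusion-driven} destabilization: the kinetics \eqref{eq:Maini_2} are stable, $\sigma(A)\subset\{\mathrm{Re}\,z<0\}$, yet for some wavenumber $k\neq 0$ the matrix $M(k)$ acquires an eigenvalue with non-negative real part, the bifurcation occurring at the parameter value where the largest real part among the eigenvalues of $M(k)$, maximized over $k$, first reaches $0$ at some critical $k_c\neq 0$.

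Next I would invoke the notion of $s$-stability used in \cite{Maini2000}: the matrix $A$ is $s$-stable if $A-E$ is a Hurwitz (stable) matrix for \emph{every} diagonal matrix $E=\mathrm{diag}(e_1,\dots,e_n)$ with all $e_j\ge 0$. The single observation driving the proof is that, for each real $k$, the matrix $E:=k^{2}D$ is exactly such a non-negative diagonal matrix. Hence $s$-stability of $A$ immediately forces $M(k)=A-k^{2}D$ to be stable for every $k$, so no eigenvalue branch $\lambda(k)$ of $M(k)$ can reach the closed right half-plane. In particular there is no critical wavenumber $k_c$ with $\mathrm{Re}\,\lambda(k_c)=0$, so $u_s$ cannot be destabilized by diffusion and no Turing bifurcation occurs. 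The argument is uniform in the number of species: for $n=1$ it is trivial, since subtracting $k^{2}D_1\ge 0$ from a negative scalar keeps it negative, and for $n\ge 2$ the matrix computation above is identical; this gives the conclusion for every $n\ge 1$.

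The proof is short because essentially all of its content is already packaged in the definition of $s$-stability; consequently the points that really need care are stating that definition precisely and noting that the reduction to a non-negative diagonal perturbation $k^{2}D$ of $A$ is insensitive to the domain and boundary conditions. I would also stress that the hypothesis is genuinely necessary and not cosmetic: ordinary stability of $A$ alone does \emph{not} preclude Turing patterns --- this is exactly Turing's original discovery --- so the force of $s$-stability lies in ruling out precisely those stable matrices $A$ for which some $A-k^{2}D$ loses stability.
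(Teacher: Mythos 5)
The paper does not actually prove Theorem \ref{thm:maini}: it is imported verbatim from Satnoianu, Menzinger and Maini (2000) as background, with only a citation, so there is no internal proof to compare yours against. On its own terms your argument is correct and is the standard (essentially the only) proof of this implication. Once ``s-stable'' is taken to mean that $A-E$ is Hurwitz for \emph{every} non-negative diagonal matrix $E$, the theorem reduces to the single observation you make: decomposing the linearized problem into spatial eigenmodes replaces $A$ by $A-k^{2}D$ (or, on a bounded domain, by $A-\lambda_{k}D$ with $\lambda_{k}\ge 0$ the eigenvalues of $-\Delta$, exactly as in the matrix $M_{\mu_k}$ of \eqref{neq:turing_transition}), and $k^{2}D$ is precisely such an $E$; hence no mode can cross into the closed right half-plane and no diffusion-driven bifurcation can occur, uniformly in $n$.

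Two cautions. First, the only place your write-up could be attacked is the definition of s-stability itself: in the cited reference the nontrivial content is the \emph{characterization} of strong stability (via excitable principal subsystems and sign conditions on minors), not this implication, so you should verify that the definition you quote is the one actually adopted there --- if s-stability were instead \emph{defined} through those minor conditions, you would owe an additional (and not entirely trivial) equivalence step before your one-line argument applies. Second, a small precision issue: Hurwitz stability of every $A-k^{2}D$ excludes eigenvalues in the \emph{open} right half-plane and on the imaginary axis for each fixed $k$, which is exactly what is needed to rule out a critical wavenumber $k_{c}$ with $\mathrm{Re}\,\lambda(k_{c})=0$; you state this correctly, but it is worth keeping the open/closed distinction explicit since the bifurcation is defined by the first touching of the imaginary axis.
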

In particular, when $n=2$, the necessary conditions for Turing's instability are:

\bea
&&a_{11} + a_{22} < 0, \quad a_{11}a_{22} - a_{12}a_{21}> 0, \\
&&D_1 a_{11} + D_2 a_{22} > \sqrt{D_1 D_2 (a_{11}a_{22} - a_{12}a_{21})} > 0; 
\eea
which is the standard result of Turing's instability \cite{Turing1952,MurrayBook2001}. One of our main contributions is to generalize the above result, and to give a \textit{sufficient and necessary} condition for Turing's instability when $n=2$:
\begin{thm}[Theorem \ref{thm:main}]\label{thm:1} 
If
\begin{equation}
det(A)>0,~~Tr(A)<0
\end{equation}
 where $A$ is as in (\ref{ex:A}), and
 \bea
 &&L=-\frac{det(A)}{D_{u}D_{v}}
  +\frac{D^{2}}{4D_{u}^{2}D_{v}^{2}}, \\
 && D=D_{v}a_{11}+D_{u}a_{22}, \\
 &&det(A)=a_{11}a_{22}-a_{12}a_{21}.
 \eea
   then we get two conclusions:\\
(1) If $D \leq 0$ and $L \leq 0 $, system \eqref{eq:model} is Turing stable.\\
(2) If $L > 0 $, system \eqref{eq:model} generates Turing's instability if and only if
there exists an eigenvalue $\lambda_{k}$ of $ -\Delta $ such that
$-\sqrt{L}<\lambda_{k}-\frac{D}{2D_{u}D_{v}}<\sqrt{L}$.
 \end{thm}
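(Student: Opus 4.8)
\noindent\emph{Proof strategy.} The plan is to linearize \eqref{eq:model} about the homogeneous steady state and diagonalize the linearized operator in the eigenbasis of $-\Delta$. Writing the Jacobian as $A=(a_{ij})$, letting $0=\lambda_{0}\le\lambda_{1}\le\cdots$ be the eigenvalues of $-\Delta$ on the domain (with the boundary condition under consideration) and $\{e_{k}\}$ the corresponding complete orthonormal eigenfunctions, the perturbation $w=(u-u_{s},v-v_{s})$ expanded as $w=\sum_{k}c_{k}(t)e_{k}$ makes the linear flow split into the finite-dimensional systems $\dot c_{k}=M_{k}c_{k}$ with
\[
M_{k}=A-\lambda_{k}\,\mathrm{diag}(D_{u},D_{v}).
\]
Hence the homogeneous state is linearly unstable precisely when at least one $M_{k}$ fails to be a Hurwitz (stable) matrix, and the problem is reduced to a $2\times2$ sign analysis for each $k$.

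First I would record
\[
\mathrm{Tr}(M_{k})=\mathrm{Tr}(A)-\lambda_{k}(D_{u}+D_{v}),\qquad \det(M_{k})=D_{u}D_{v}\lambda_{k}^{2}-D\lambda_{k}+\det(A)=:q(\lambda_{k}),
\]
with $D=D_{v}a_{11}+D_{u}a_{22}$ as in the statement. Since $\mathrm{Tr}(A)<0$, $\lambda_{k}\ge0$ and $D_{u},D_{v}>0$, we get $\mathrm{Tr}(M_{k})<0$ for all $k$; therefore $M_{k}$ has an eigenvalue with positive real part if and only if $\det(M_{k})=q(\lambda_{k})<0$ (a negative determinant forces two real eigenvalues of opposite sign, while $\det\ge0$ together with negative trace forces both eigenvalues into the closed left half-plane). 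So the whole question becomes: does the upward parabola $q(\lambda)=D_{u}D_{v}\lambda^{2}-D\lambda+\det(A)$ take a negative value at one of the discrete points $\lambda_{k}$?

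Next I would analyze $q$. Its discriminant is $D^{2}-4D_{u}D_{v}\det(A)=4D_{u}^{2}D_{v}^{2}L$, so $L$ and the discriminant have the same sign, and when $L>0$ the two real roots are $\lambda_{\pm}=\frac{D}{2D_{u}D_{v}}\pm\sqrt{L}$ (using $D_{u}D_{v}>0$). For conclusion (1): when $L\le0$ one has $q\ge0$ on all of $\mathbb{R}$, hence $q(\lambda_{k})\ge0$ for every $k$ and every $M_{k}$ is stable, so the system is Turing stable; the extra hypothesis $D\le0$ only reinforces this, since then the vertex $\frac{D}{2D_{u}D_{v}}$ lies in $(-\infty,0]$ and $q$ is already increasing on $[0,\infty)$ with $q(0)=\det(A)>0$. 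For conclusion (2): when $L>0$, $q(\lambda)<0$ exactly for $\lambda\in(\lambda_{-},\lambda_{+})$, i.e.\ for $-\sqrt{L}<\lambda-\frac{D}{2D_{u}D_{v}}<\sqrt{L}$; combined with the previous paragraph this says that some mode is unstable iff some eigenvalue $\lambda_{k}$ of $-\Delta$ lies in that open interval, which is exactly the asserted condition. Since the standing hypotheses $\det(A)>0$, $\mathrm{Tr}(A)<0$ mean the kinetic ODE is stable, any instability produced this way is genuinely diffusion-driven, i.e.\ Turing's instability.

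The computation itself is elementary; what needs care rather than ingenuity is the bookkeeping: (i) justifying the eigenfunction expansion and the equivalence ``PDE linearization spectrally stable $\Leftrightarrow$ every $M_{k}$ Hurwitz'', which is standard because $-\Delta$ on a bounded domain with Neumann (or periodic) conditions has compact resolvent and a complete orthonormal eigenbasis; (ii) the borderline cases $q(\lambda_{k})=0$, which I would place on the critical/non-unstable side (consistent with later defining the threshold $D_{v}^{*}$ by $\max_{k}q(\lambda_{k})=0$); and (iii) the $k=0$ mode, where $M_{0}=A$ and $q(0)=\det(A)>0$, so the constant mode never destabilizes — as it must be for a true Turing instability. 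I do not expect a genuine obstacle here: the substance of the theorem is the clean dichotomy (1)--(2), and the ``hard'' part is really just organizing the parabola/spectrum comparison cleanly.
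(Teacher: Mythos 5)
Your proposal is correct and follows essentially the same route as the paper: reduce to the family of $2\times2$ matrices $M_{k}=A-\lambda_{k}\,\mathrm{diag}(D_{u},D_{v})$, observe $\mathrm{Tr}(M_{k})<0$ always, and decide stability by the sign of the upward parabola $\det(M_{k})=D_{u}D_{v}[(\lambda_{k}-\tfrac{D}{2D_{u}D_{v}})^{2}-L]$, whose roots are $\tfrac{D}{2D_{u}D_{v}}\pm\sqrt{L}$. If anything, your write-up is slightly tighter than the paper's: you make explicit the step (needed in the necessity direction) that a negative trace plus nonnegative determinant rules out a positive eigenvalue, and you correctly note that $L\le0$ alone already gives conclusion (1), the hypothesis $D\le0$ being redundant there.
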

 
Besides, the critical parameter $D_{v}^{*}$ defined in \eqref{eq:ex_critical} can be found as follows
 \begin{thm} [Theorem \ref{thm:main_cont}]
 Let $D_{u}$ be fixed, and (\ref{neq:turing_transition}) holds true,
then the critical value of the transition for  system \eqref{eq:model} is
$D_{v}^{*}$, i.e, the system \eqref{eq:model} generates the Turing's instability if and only if $D_{v} > D_{v}^{*}$.
\end{thm}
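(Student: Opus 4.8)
The plan is to leverage Theorem~\ref{thm:1} directly, treating it as a characterization of Turing's instability in terms of the position of the eigenvalues $\lambda_k$ of $-\Delta$ relative to the interval $\bigl(\tfrac{D}{2D_uD_v}-\sqrt{L},\,\tfrac{D}{2D_uD_v}+\sqrt{L}\bigr)$, and then to track how this interval moves as $D_v$ varies while $D_u$ and the kinetic data $a_{ij}$ are held fixed. The key observation is that both $L=L(D_u,D_v)$ and the center $c(D_v):=\tfrac{D}{2D_uD_v}=\tfrac{a_{11}}{2D_u}+\tfrac{a_{22}}{2D_v}$ are explicit rational functions of $D_v$, so the whole analysis reduces to a one-variable monotonicity/continuity argument.

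First I would record the hypotheses $\det(A)>0$, $\mathrm{Tr}(A)<0$, together with the standing assumption~(\ref{neq:turing_transition}) that guarantees Turing's instability is at all possible (so that the activator-inhibitor sign pattern $a_{11}>0$, $a_{22}<0$ holds, forcing $D=D_va_{11}+D_ua_{22}$ to become positive for $D_v$ large, and ensuring $L>0$ in the relevant regime). Next I would analyze the map $D_v\mapsto$ (the open interval $I(D_v)$ appearing in Theorem~\ref{thm:1}(2)): I would show that as $D_v$ increases, the right endpoint $c(D_v)+\sqrt{L(D_v)}$ is increasing (or at least that the interval eventually sweeps past the smallest positive eigenvalue $\lambda_1$ of $-\Delta$), so that there is a threshold value of $D_v$ below which no eigenvalue $\lambda_k$ lies in $I(D_v)$ and above which $\lambda_1\in I(D_v)$. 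Defining $D_v^{*}$ to be exactly this threshold — equivalently the smallest $D_v$ for which the discriminant-type condition $-\sqrt{L}<\lambda_k-c(D_v)<\sqrt{L}$ is met for some $k$, which by explicit computation is the root of a quadratic in $D_v$ coming from setting the principal minor condition $D_v a_{11}+D_u a_{22}-(D_u+D_v)\lambda_k$ equal to $2\sqrt{D_uD_v(\det A + (\text{lower order}))}$ — I would then invoke Theorem~\ref{thm:1}(2) to conclude that Turing's instability occurs if and only if $D_v>D_v^{*}$. I would also need to confirm that for $D_v\le D_v^{*}$ we are in case (1) of Theorem~\ref{thm:1} (i.e. $D\le 0$ and $L\le 0$) or else that $I(D_v)$ misses every eigenvalue, so that the system is genuinely Turing stable there; this is where the explicit sign bookkeeping on $D$ and $L$ as functions of $D_v$ does the work.

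The main obstacle I anticipate is the monotonicity step: $\sqrt{L(D_v)}$ is not obviously monotone in $D_v$ because $L$ combines a $-\det(A)/(D_uD_v)$ term (increasing in $D_v$) with a $D^2/(4D_u^2D_v^2)$ term whose numerator $D^2=(D_va_{11}+D_ua_{22})^2$ grows quadratically while the denominator grows quadratically too, so the behavior of the center-plus-radius endpoint requires a careful derivative computation or, more robustly, a direct argument that the \emph{set} $\{D_v : \exists k,\ \lambda_k\in I(D_v)\}$ is an interval of the form $(D_v^{*},\infty)$. I would handle this by reformulating the eigenvalue condition as the sign of the quadratic $q(\lambda_k;D_v) := D_uD_v\lambda_k^2 - (D_va_{11}+D_ua_{22})\lambda_k + \det(A)$ (whose negativity is exactly the instability criterion), noting that for each fixed eigenvalue $\lambda_k>0$ the map $D_v\mapsto q(\lambda_k;D_v)$ is affine in $D_v$ with negative slope $a_{11}\lambda_k$ wait — rather, grouping terms as $D_v(D_u\lambda_k^2 - a_{11}\lambda_k) + (\det(A)-D_ua_{22}\lambda_k)$, which is affine in $D_v$ and, under (\ref{neq:turing_transition}), has the sign pattern making it positive for small $D_v$ and negative for large $D_v$, crossing zero exactly once; taking $D_v^{*}$ to be the smallest such crossing over the (finitely many relevant, since $q$ is eventually positive for large $\lambda_k$) eigenvalues gives the threshold and the iff statement follows. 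The continuity of $q$ in $D_v$ then upgrades this to the clean statement that instability holds precisely for $D_v>D_v^{*}$.
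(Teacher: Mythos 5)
Your proposal follows essentially the same route as the paper: both reduce Turing's instability to the condition $h(\lambda_k;D_v)=\det(M_{\lambda_k})<0$ for some eigenvalue $\lambda_k$ of $-\Delta$, solve $h(\lambda_k;D_v)=0$ for $D_v$ to obtain the crossing values $D_v^{\lambda_i}$, $D_v^{\lambda_{i+1}}$, and take their minimum as the critical value. Your additional observation that $h(\lambda_k;\cdot)$ is affine in $D_v$ with at most one sign change for each relevant eigenvalue, so that the instability set $\{D_v:\exists k,\ h(\lambda_k;D_v)<0\}$ is genuinely a ray $(D_v^{*},\infty)$, is in fact a cleaner justification of the threshold structure than the paper's own proof, which disposes of this step with an ``obviously.''
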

 
Using a new technique called dynamic bifurcation theory as in Section \ref{sec:gemetric}., we will also provide a geometric explanation for Turing's instability.

Specifically, we give an application of the above result to study solution behaviors of Schnakenberg reaction diffusion system \cite{Schnakenberg1979}. This simple system can produce a great amount of different dynamic patterns with modifications of several simple parameters. Nevertheless, we propose the following necessary and sufficient condition for Turing's instability using \textit{center manifold reduction}.

 \begin{thm}[Theorem \ref{thm:sch_1}]
Assume $\frac{b-a}{a+b}-(a+b)^{2}<0$\\
(1) if
$-\frac{\sqrt{4d}}{r(a+b)}<d\frac{b-a}{a+b}-(a+b)^{2}<0$, i.e.
\bea
\max{(d,1)}(b-a) < (a+b)^3 < (d+\frac{\sqrt{4d}}{r})(b-a)
\eea
hen system (\ref{eq:sch}) is Turing stable.\\
(2) if
$d\frac{b-a}{a+b}-(a+b)^{2}>\frac{\sqrt{4d}}{r(a+b)}$ or $d\frac{b-a}{a+b}-(a+b)^{2}<-\frac{\sqrt{4d}}{r(a+b)}$
\bea
b-a < (a+b)^3 < (d-\frac{\sqrt{4d}}{r})(b-a)
\eea
\bea
\text{or~}\max{(1,d+\frac{\sqrt{4d}}{r})}(b-a) < (a+b)^3 
\eea
then system (\ref{eq:sch}) generates Tuing's instability if and only if there exists an eigenvalue $\lambda_{k_{0}}$ of $-\Delta$ such that $\beta_{-}<\lambda_{k_{0}}<\beta_{+}$.
\end{thm}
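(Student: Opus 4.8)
The plan is to apply the general framework of Theorem \ref{thm:main} and Theorem \ref{thm:main_cont} directly to the Schnakenberg system \eqref{eq:sch}, with the linearization matrix $A$ computed at the unique positive homogeneous steady state. First I would record the steady state $(u_s,v_s)$ and the entries $a_{11},a_{12},a_{21},a_{22}$ of the Jacobian of the Schnakenberg kinetics; a standard computation gives $u_s=a+b$, $v_s=b/(a+b)^2$, and $a_{11}=\frac{b-a}{a+b}$, $a_{22}=-(a+b)^2$ (up to the reaction-rate scaling $r$), with $a_{12}=(a+b)^2>0$ and $a_{21}=-\frac{2b}{a+b}<0$ so that $a_{12}a_{21}<0$. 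One then checks $\det(A)=r^2(a+b)^2>0$ automatically, so the first hypothesis $\det(A)>0$ of Theorem \ref{thm:main} is free, and the hypothesis $\operatorname{Tr}(A)<0$ is exactly the stated assumption $\frac{b-a}{a+b}-(a+b)^2<0$ (after dividing by $r$). So the abstract hypothesis $\det(A)>0,\ \operatorname{Tr}(A)<0$ of Theorem \ref{thm:main} holds precisely under the stated assumption.

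Next I would substitute these entries into the quantities $D=D_v a_{11}+D_u a_{22}$ and $L=-\det(A)/(D_uD_v)+D^2/(4D_u^2D_v^2)$ from Theorem \ref{thm:main}, using the nondimensional diffusion ratio $d=D_v/D_u$. Here $D=D_u\big(d\frac{b-a}{a+b}-(a+b)^2\big)r$ up to the rate scaling, so the sign of $D$ is the sign of $d\frac{b-a}{a+b}-(a+b)^2$, and a direct algebraic manipulation shows that the condition $L>0$ is equivalent to $\big|d\frac{b-a}{a+b}-(a+b)^2\big|>\frac{\sqrt{4d}}{r(a+b)}$ — the two alternatives in part (2) — while $L\le 0$ together with $D\le 0$ is the complementary case, which under the standing assumption $\operatorname{Tr}(A)<0$ reduces to the sandwich inequality $-\frac{\sqrt{4d}}{r(a+b)}<d\frac{b-a}{a+b}-(a+b)^2<0$ of part (1). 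Then I would invoke Theorem \ref{thm:main}(1) to conclude Turing stability in case (1), and Theorem \ref{thm:main}(2) in case (2): instability holds iff some eigenvalue $\lambda_k$ of $-\Delta$ satisfies $-\sqrt{L}<\lambda_k-\frac{D}{2D_uD_v}<\sqrt{L}$, which I would rewrite as $\beta_-<\lambda_{k_0}<\beta_+$ with $\beta_\pm:=\frac{D}{2D_uD_v}\pm\sqrt{L}$; the explicit forms of $\beta_\pm$ in terms of $a,b,d,r$ follow by back-substitution.

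The last bookkeeping step is to translate the two sign conditions on $d\frac{b-a}{a+b}-(a+b)^2$ and $\operatorname{Tr}(A)$ into the polynomial inequalities on $(a+b)^3$ displayed in the statement. Since $b-a$ can be of either sign, one has to track cases: when $b>a$, dividing $\frac{b-a}{a+b}<(a+b)^2$ by the positive quantity $\frac{b-a}{a+b}$ gives $(a+b)^3>b-a$, and similarly $d\frac{b-a}{a+b}<(a+b)^2$ gives $(a+b)^3>d(b-a)$, so together $(a+b)^3>\max(d,1)(b-a)$; the upper bounds in (1) and (2) come from the $\pm\frac{\sqrt{4d}}{r(a+b)}$ terms after clearing the $(a+b)$ denominator, i.e. from $d(b-a)\lessgtr (a+b)^3\mp\frac{\sqrt{4d}}{r}(a+b)\cdot(a+b)^2/(a+b)^2$, which simplifies to the stated $(d\mp\frac{\sqrt{4d}}{r})(b-a)$ bounds; when $b\le a$ the steady state analysis still forces the standing assumption, and one checks the inequalities collapse consistently. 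Finally, the "if and only if" in case (2) requires the principle of exchange of stabilities for this system — that the first eigenvalue to cross does so as a real eigenvalue through zero with the real part governing the transition — which is part of what Theorem \ref{thm:main}/\ref{thm:main_cont} already encodes for $2\times2$ systems, so I would simply cite it rather than re-prove it. The main obstacle I anticipate is not conceptual but purely the algebra of part (3): keeping the reaction-rate scaling $r$, the diffusion ratio $d$, and the two possible signs of $b-a$ all consistent while converting the $L>0$ condition into the clean cubic inequalities — it is easy to drop a factor or mishandle the case $b<a$, so I would verify the equivalences by checking a boundary point ($L=0$, i.e. $d\frac{b-a}{a+b}-(a+b)^2=\pm\frac{\sqrt{4d}}{r(a+b)}$) numerically against the displayed cubics.
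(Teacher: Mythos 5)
Your proposal follows essentially the same route as the paper, which gives no more detail than this: compute the Jacobian at the steady state $(a+b,\,b/(a+b)^2)$, obtain $\det(A)=r^{2}(a+b)^{2}$, $D=r\bigl(d\tfrac{b-a}{a+b}-(a+b)^{2}\bigr)$ and $L=-\tfrac{r^{2}(a+b)^{2}}{d}+\tfrac{D^{2}}{4d^{2}}$, and read off both parts from Theorem \ref{thm:main} with $\beta_{\pm}$ the roots of $h(\lambda)=0$. The one point to recheck is your asserted equivalence of $L>0$ with the threshold $\tfrac{\sqrt{4d}}{r(a+b)}$: the algebra actually yields $\bigl|d\tfrac{b-a}{a+b}-(a+b)^{2}\bigr|>\sqrt{4d}\,(a+b)$ (the $r$ cancels), so the displayed threshold, which you inherited from the statement, appears to carry exactly the kind of bookkeeping slip you flagged and proposed to verify numerically.
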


Further more, we discussed the case of single and double eigenvalue bifurcation respectively. For example, in the case of double eigenvalue bifurcation, we are able to get the explicit expression for the system above
\begin{thm}[Theorem \ref{thm:sch_2}]
The system \eqref{eq:sch}  has a stable steady state $(0,0)$ for $d<d_{0}$, and bifurcates a new steady state $(x_{0},y_{0})$  for $d>d_{0}$, if and only if the following conditions hold true
\bea
  A_{1}+A_{4}<0,
  A_{1}A_{4}-A_{2}A_{3}>0,
\eea
where
\beaa
  &A_{1}=\beta_{\lambda_{i_{1}}1}+2a_{20}x_{0}+a_{11}y_{0}, &A_{2}=2y_{0}a_{02}+a_{11}y_{0},\\
  &A_{3}=2b_{02}x_{0}^{2}+b_{11}y_{0},&A_{4}=\beta_{\lambda_{i_{2}}1}+2b_{20}y_{0}+b_{11}x_{0}.
\eeaa
and there is a stable attractor $u^{d}$ of the system bifurcated from $d>d_{0}$  as follows
\beaa
  u^{d}=x_{0}\xi e_{\lambda_{i_{1}}}+y_{0}\eta e_{\lambda_{i_{2}}}
 \eeaa
\end{thm}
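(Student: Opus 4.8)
The plan is to treat this as a standard center manifold reduction in the double-eigenvalue regime, building directly on the general Turing framework of Section~\ref{sec:turing} and the Schnakenberg-specific instability criterion of Theorem~\ref{thm:sch_1}. First I would set up the situation: at $d = d_0$ the linearized operator has a two-dimensional kernel spanned by $e_{\lambda_{i_1}}$ and $e_{\lambda_{i_2}}$ (the two eigenfunctions of $-\Delta$ whose eigenvalues sit in the instability window $(\beta_-,\beta_+)$), with all other modes strictly stable. The principle of exchange of stabilities, established earlier for system~\eqref{eq:model}, guarantees that the critical eigenvalues $\beta_{\lambda_{i_1}}(d)$ and $\beta_{\lambda_{i_2}}(d)$ cross zero transversally at $d = d_0$, so the spectral gap condition needed for the center manifold theorem holds. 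I would then write $u = x\,\xi e_{\lambda_{i_1}} + y\,\eta e_{\lambda_{i_2}} + \Phi(x,y,d)$, where $\xi,\eta$ are the relevant components of the critical eigenvectors of the matrix $A$ and $\Phi$ is the (at least quadratic) center manifold correction.

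Next I would carry out the reduction explicitly. Projecting the Schnakenberg kinetics onto the critical subspace and expanding $f,g$ to the needed order around $(0,0)$ produces the quadratic coefficients $a_{20},a_{11},a_{02},b_{20},b_{11},b_{02}$ appearing in the statement. Substituting the ansatz and collecting terms gives a planar reduced system
\begin{equation}
\dot x = \beta_{\lambda_{i_1}1}(d)\,x + (\text{quadratic in }x,y), \qquad
\dot y = \beta_{\lambda_{i_2}1}(d)\,y + (\text{quadratic in }x,y),
\end{equation}
where $\beta_{\lambda_{i_j}1}$ denotes the leading (linear-in-$d-d_0$) part of the $j$-th critical eigenvalue. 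Setting the right-hand side to zero and discarding the trivial solution yields the bifurcated steady state $(x_0,y_0)$; its stability is governed by the Jacobian of the reduced vector field evaluated at $(x_0,y_0)$, whose entries are exactly $A_1,A_2,A_3,A_4$ as written. The steady state $(x_0,y_0)$ is then asymptotically stable iff the $2\times2$ trace is negative and the determinant positive, i.e. $A_1+A_4<0$ and $A_1A_4 - A_2A_3 > 0$, and lifting this attractor back through the center manifold gives $u^d = x_0\xi e_{\lambda_{i_1}} + y_0\eta e_{\lambda_{i_2}}$ up to higher-order terms, which is the claimed form.

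The main obstacle is bookkeeping rather than conceptual: one must verify that the contribution of the center manifold function $\Phi$ to the reduced quadratic terms is genuinely higher order and does not alter the stated coefficients $A_1,\dots,A_4$ — equivalently, that the quadratic truncation of the reduced dynamics is faithful. Since $\Phi = O(|(x,y)|^2)$ and the nonlinearity is quadratic, $\Phi$ only feeds into cubic and higher terms of the reduced equations, so at the order determining the bifurcating branch and its linearized stability it may be neglected; I would state this carefully and invoke the approximation theorem for center manifolds (as in \cite{Ma2015}) rather than computing $\Phi$ explicitly. A secondary point to handle is the genericity assumption that keeps the two critical modes nondegenerate — i.e. that $\lambda_{i_1} \neq \lambda_{i_2}$ are the \emph{only} eigenvalues of $-\Delta$ in the window at $d = d_0$, and that the corresponding eigenvectors $(\xi,\cdot)$, $(\eta,\cdot)$ of $A$ are well-defined — which is exactly the scenario isolated by Theorem~\ref{thm:sch_1}(2) and the classification of transition types; I would flag where this hypothesis enters. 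Finally, the ``only if'' direction follows because when $A_1+A_4 \geq 0$ or $A_1A_4-A_2A_3 \leq 0$ the reduced Jacobian has an eigenvalue with nonnegative real part, so $(x_0,y_0)$ fails to attract, and by the reduction this transfers back to the full PDE.
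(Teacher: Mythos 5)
Your proposal follows essentially the same route as the paper: reduce to the planar quadratic system on the two-dimensional critical eigenspace, identify the bifurcated equilibrium $(x_0,y_0)$, and read off its stability from the trace and determinant of the reduced Jacobian, whose entries are exactly $A_1,\dots,A_4$. The paper's own proof is merely a terser version of this (it records the Jacobian and invokes the condition \eqref{382}), so your additional remarks on transversality and on $\Phi$ contributing only at cubic order are harmless elaborations of the same argument.
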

besides, we are also able to provide a geometric interpretation of the above bifurcation results, using phase plane diagram.

\section{Key results in dynamic transition theory}\label{sec:3}

\subsection{Dynamic transition theory -- basic setup}
Transitions are found throughout our everyday lives. Before studying details of transition problems, we need a good understanding about the nature world in advance. The laws of nature are
usually represented by differential equations, which can be regarded as dynamical
systems -- both finite and infinite-dimensional. In this section, we briefly introduce the key ingredients of dynamic transition theory developed by Ma and Wang \cite{Ma2015}.

We start with the reaction-diffusion system proposed in section \ref{sec:Turing}, with Dirichlet/Neumann boundary condition. Without loss of generality, suppose the steady state $(u_{0},v_{0})$ =(0,0), and take Taylor expansion of \eqref{eq:model} at $(0,0)$ as follows:
\bea
\begin{cases}
   \pd{u}{t}=D_{u}\Delta u+a_{11} u+a_{12}v+g_{1}(u,v),\\[10pt]
   \pd{v}{t}=D_{v}\Delta v+a_{21} u+a_{22}v+g_{2}(u,v),
\end{cases}\label{eq:linearized}
\eea
where
\begin{equation}\label{ex:A}
A:=
\left(
\begin{array}{cc}
  a_{11}& a_{12}\\
  a_{21}& a_{22}
\end{array}
\right)
=\left(\begin{array}{cc}
   f_{u}(0,0) & f_{v}(0,0) \\
   g_{u}(0,0) & g_{v}(0,0)
 \end{array}\right).
\end{equation}
Let
\begin{align}\label{202}
 &H =L ^{2}(\Omega, \mathbb{R}^{2}),\\
\label{203}
   &H_{1}=\{(u,v)\in H ^{2}(\Omega,\mathbb{R}^{2}):\frac{\partial u}{\partial n}=0 ~\text{on}~  \partial \Omega \}.
\end{align}
Define operator $ L_{\mu}=A_{\mu}+B_{\mu}$ and $ G_{\mu}$ as  follows 
\begin{align}\label{205}
&L_{\mu},G_{\mu} : H_{1}\rightarrow H,  \\
 \label{206}&
  A_{\mu}u=(D_{u}\Delta u,D_{v}\Delta v),
\end{align}

\begin{equation}\label{207}
B_{\mu}u=
\left(
\begin{array}{cc}
  a_{11}& a_{12}\\
  a_{21}& a_{22}
\end{array}
\right)u,
\end{equation}
\begin{equation}\label{208}
G_{\mu}(u)=(g_{1}(u),g_{2}(u)).
\end{equation}
So $L_{\mu}$ and $G_{\mu}$ are respectively the linear and nonlinear part of \eqref{eq:linearized}. Therefore, system \eqref{eq:linearized} can be rewritten as
\begin{eqnarray}\label{eq:operator}
\left\{
   \begin{array}{ll}
   \pd{u}{t}=L_{\mu}u + G_{\mu}(u),
   \\u(0)=\varphi,
\end{array}
\right.
\end{eqnarray}
where $u=(u,v)$ , $\mu=(D_{u},D_{v},r)\in \mathbb{R}_{3}^{+}=\{ (x_{1},x_{2},x_{3})\in \mathbb{R}^{3}:x_{i}\geq 0,i=1,2,3 \}$
and $ G_{\mu}( u )=o(\|u\|_{X_{\alpha}})$.\\

In the next section, we will introduce our main result. We start with solving eigenvalue problem of the system \eqref{eq:operator}.

\subsection{Exchange of stability} \label{sec:exchange}
Bifurcation theory in ODE is already well-developed. It is well-know in the context of ODE bifurcation theory that a bifurcation happens when the max real part of the eigenvalues of the right hand side Jacobian matrix changes sign when some parameter $\mu$ passes a critical value $\mu_0$.

A natural question arises: how can we extend the ODE bifurcation theory to PDE case? Without too many doubts, the answer is still related to max real part of the eigenvalues. However, we need to handle with the potential difficulty that PDE problems are of infinitely dimensional.

Luckily enough, as we will see in Section \ref{sec:eigenvalue}, under certain conditions, a large group of infinite dimensional operators have nice spectral properties. Therefore, for this kind of systems, we can introduce the \textit{Exchange of stability} property (first coined by Davis \cite{Davis1969} and formally explored by Ma and Wang \cite{Ma2005_evolution,Ma2015}),.
\begin{definition}[Principle of Exchange of Stability, PES]\label{def:PES}
Consider the system \eqref{eq:operator} let $\{\beta_i \in \mathbb{C} | i = 1,2,....\}$ be all eigenvalues of $L_\mu$ (counting multiplicities), and suppose that they satisfy
\bea
&Re~ \beta_i(\mu)
\begin{cases}
	< 0 \quad \textbf{if~} \mu < \mu_0,\\
	= 0 \quad \textbf{if~} \mu = \mu_0,\\
	> 0 \quad \textbf{if~} \mu > \mu_0,
\end{cases} & 1\le i \le m \\[10pt]
&Re~ \beta_i(\mu) \neq 0 \hspace{80pt} & j\ge m+1
\label{eq:exchange}
\eea
for some $\mu_0\in\mathbb{R}^1$. Then we say the system \eqref{eq:operator} satisfies Principle of Exchange of Stability or PES for short.
\end{definition}
Definition. \ref{def:PES} provided us a natural way to divide the eigenvalues of \eqref{eq:operator} into two different ways. More specifically, let $\{e_1(\mu),e_2(\mu),...,e_m(\mu) \}$ and $\{e^*_1(\mu),e^*_2(\mu),...,e^*_m(\mu) \}$ be the eigenvectors of $L_\mu$ and its conjugate operator $L^*_\mu$ corresponding to the eigenvalues $\beta_i(\mu), 1\le i \le m$ respectively. According to Theorem 3.4 in \cite{Ma2005} (to my knowledge, this had been the first time that the theorem ever appeared)
\begin{thm}[Spectral decomposition of a linear completely continuous field]
	\label{thm:spectral}
Let $L = A + B : X_1 \to X$ be a linear completely continuous field, then we have following results:
\begin{enumerate}
	\item if $\{\beta_k | k\ge 1\} \in\mathbb{C}$ are eigenvalues of $L$ and $\{e_1,e_2,...,e_m \}$,  $\{e^*_1,e^*_2,...,e^*_m \}$ be the corresponding eigenvectors of $L_\mu$ and its conjugate operator $L^*_\mu$, then 
	\bea
	\braket{e_i,e^*_j} = \delta_{ij}
	\eea
	
	\item $X$ can be decomposed into the following direct sum
	\bea
	\begin{cases}
		X = \bar{E_1} \oplus E_2,\\
		E_1 = \text{span~}\{e_k | k\ge 1\} \cap X,\\
		E_2 = \{\bv \in X | \braket{\bv, e_k^*}_X = 0  \forall k\ge 1\};
	\end{cases}
	\eea
	
	\item $E_1$ and $E_2$ are invariant spaces of $L^{-1}$ and
	\bea
	\lim_{n\to\infty} ||L^{-1}\bv||_X^{1/n} = 0, \forall \bv \in E_2;
	\eea
	
	\item Let $\{\gamma_1,\gamma_2,...,\gamma_k\} \subset\mathbb{C}$ be eigenvalues of $L^{-1}$ (counting multiplicity) in the order $|\gamma_1| \ge |\gamma_2| \ge ...\ge |\gamma_k|$, $\{f^*_1,f^*_2,...,f^*_k \}\subset{X\otimes\mathbb{C}} \ $ be corresponding eigenvalues ($X\otimes\mathbb{C}$ is the complexification of $X$) of $(L^{-1})^* = L^{*-1}$, and let $E_k^* = \text{span}\{f^*_1,f^*_2,...,f^*_k \}$ ($E_k^* = \emptyset$ if $k=0$). If
	\bea
	\rho_{k+1} := \sup_{u\in X\otimes\mathbb{C}} \lim_{n\to\infty}|\braket{L^{-n}u,u}^{1/n} > 0|
	\eea
	then there is an eigenvalue $gamma_{k+1}\in \mathbb{C}$ of $L^{-1}$ with $|\gamma_{k+1}| = \rho_{k+1}$ and $|\gamma_{k+1}|\le|\gamma_k|$.
\end{enumerate}
\end{thm}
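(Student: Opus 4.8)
The statement is, in essence, the Riesz--Schauder spectral theory for compact operators applied to the resolvent of $L$, together with the non-Hermitian biorthogonality that plays the role self-adjointness would play in the symmetric case; in the literature it is Theorem~3.4 of \cite{Ma2005}, so the plan is to recall how such a proof proceeds. The first step is to reduce everything to the compact operator $T := L^{-1}$: by definition of a linear completely continuous field, $A : X_{1} \to X$ is an isomorphism and $BA^{-1}$ is compact on $X$, so (after replacing $L$ by $L - cI$ with $c$ in the resolvent set of $L$ if $0 \in \sigma(L)$, which only shifts eigenvalues and changes nothing structural) $I + BA^{-1}$ is invertible and $L^{-1} = A^{-1}(I + BA^{-1})^{-1}$ is compact on $X$. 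Consequently the nonzero spectrum of $T$ consists of isolated eigenvalues $\gamma_{k}$ of finite algebraic multiplicity accumulating only at $0$, each carrying a finite-dimensional generalized eigenspace and a spectral (Riesz) projection $P_{k}$, and the eigenvalues of $L$ are precisely the numbers $\beta_{k} = \gamma_{k}^{-1}$.

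For part (1), eigenvectors attached to distinct eigenvalues pair to zero by the elementary computation $\beta_{i}\braket{e_{i},e_{j}^{*}} = \braket{Le_{i},e_{j}^{*}} = \braket{e_{i},L^{*}e_{j}^{*}} = \beta_{j}\braket{e_{i},e_{j}^{*}}$, so that $(\beta_{i}-\beta_{j})\braket{e_{i},e_{j}^{*}} = 0$. Within a single eigenvalue the pairing between the generalized eigenspace of $L$ and that of $L^{*}$ is non-degenerate, since $P_{k}^{*}$ is the adjoint of $P_{k}$ and $\braket{P_{k}x, e^{*}} = \braket{x, P_{k}^{*}e^{*}}$ forces any vector in the kernel of the pairing to be annihilated by $P_{k}^{*}$, hence to vanish on the finite-dimensional adjoint eigenspace; one then re-selects dual bases so that the Gram matrix is the identity, which gives $\braket{e_{i},e_{j}^{*}} = \delta_{ij}$.

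For parts (2) and (3), take $E_{1}$ to be the closed span of all generalized eigenvectors of $L$ and $E_{2} = \{\bv \in X \mid \braket{\bv, e_{k}^{*}}_{X} = 0 \ \forall k\}$ the annihilator of all generalized eigenvectors of $L^{*}$. For each $k$ the Fredholm alternative gives $X = \mathrm{Ran}\,P_{k} \oplus \mathrm{Ker}\,P_{k}$ with $\mathrm{Ker}\,P_{k}$ equal to the annihilator of the corresponding adjoint eigenspace; assembling over $k$ yields $E_{1} \cap E_{2} = \{0\}$ (a nonzero element of $E_{1}$ pairs nontrivially with some $e_{k}^{*}$) and $\overline{E_{1} + E_{2}} = X$ (the closed subspace $E_{1} \oplus E_{2}$ has trivial annihilator), the closure on $E_{1}$ being genuinely needed because for elliptic $A$ there are infinitely many eigenvalues with $|\beta_{k}| \to \infty$. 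Invariance of $E_{1}$ and $E_{2}$ under $L^{-1}$ is immediate since they are $T$-spectral subspaces; and $T|_{E_{2}}$ is compact with no nonzero eigenvalue by construction, so (every nonzero spectral value of a compact operator being an eigenvalue) $\sigma(T|_{E_{2}}) = \{0\}$, its spectral radius is $0$, and Gelfand's formula gives $\lim_{n\to\infty}\|(L^{-1})^{n}\bv\|_{X}^{1/n} = 0$ for every $\bv \in E_{2}$.

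Part (4) is the step I expect to be the main obstacle: the quantity $\rho_{k+1} = \sup_{u}\lim_{n\to\infty}|\braket{T^{n}u,u}|^{1/n}$ is an ``outer spectral radius'' measured after the first $k$ eigenvalues have been peeled off, and the claim that it is realized by an honest eigenvalue is a power-method argument disguised in Gelfand-formula language. I would introduce the $T$-invariant subspace $Y_{k}$ complementary to the spectral subspaces of $\gamma_{1},\dots,\gamma_{k}$ (equivalently the annihilator of $E_{k}^{*} = \mathrm{span}\{f_{1}^{*},\dots,f_{k}^{*}\}$), check that $\lim_{n}|\braket{T^{n}u,u}|^{1/n}$ is controlled by the spectral radius of $T|_{Y_{k}}$, and use that $T|_{Y_{k}}$ is compact — so its spectrum outside any circle is finite — to conclude that the supremum is attained and equals $|\gamma_{k+1}|$ for the next eigenvalue in modulus order, with $|\gamma_{k+1}| \le |\gamma_{k}|$ built into the ordering and $\rho_{k+1} > 0$ exactly the hypothesis ensuring $T|_{Y_{k}}$ is not quasi-nilpotent. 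The delicate points are verifying that the supremum over $u \in X \otimes \mathbb{C}$ is not killed by the biorthogonal splitting (one needs a $u$ with a nonzero component along the prospective eigenvector) and controlling the non-Hermitian Jordan blocks so that $|\braket{T^{n}u,u}|^{1/n}$ genuinely detects the dominant modulus rather than decaying; both are handled by choosing $u$ generically and invoking the finite-dimensionality of each generalized eigenspace.
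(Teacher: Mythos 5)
Your proposal is correct and follows essentially the same route the paper's own (three-line) proof sketch gestures at and delegates to Theorem 3.4 of the cited Ma--Wang reference: reduce to the compact resolvent $T=L^{-1}$, apply Riesz--Schauder theory with biorthogonal Riesz projections for conclusions (1)--(2), obtain the quasi-nilpotence of $T|_{E_2}$ via Gelfand's formula for (3), and peel off the leading spectral subspaces for (4). You simply supply the standard details the paper omits, and your identification of (3) as a spectral-radius statement rather than mere ``orthogonality'' is in fact a more accurate justification than the paper's own one-liner.
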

\begin{spf}
Conclusion 1 and 2 are analogs to Jordan's Decomposition of a finite dimension matrix (Theorem 3.3 of \cite{Ma2005}); Conclusion 3 is a direct outcome of orthogonality between the two spaces $E_1$ and $E_2$; Finally, combining Conclusion 1-3 together, we can decompose any vector $\bullet\in X\otimes\mathbb{C}$ in a proper way, yielding Conclusion 4. Details of the proof can be found in \cite{Ma2005}.
\end{spf}
Applying Theorem \eqref{thm:spectral} to \eqref{eq:operator}, we see that
\bea
X_1 = E_1 \oplus E_2, \quad X = \bar{E_1} \oplus E_2,
\eea
where
\bea
\begin{cases}
	E_1 = \text{span~}\{e_k | k\ge 1\} \cap X,\\
	\bar{E_1} = \text{~closure~of~}E_1,\\
	E_2 = \{\bv \in X | \braket{\bv, e_k^*}_X  \forall k\ge 1\}.
\end{cases}
\eea

Another important element in dynamic bifurcation theory is how to classify transition types. This can be done by making use of Principle \ref{def:PES} (PES). The following theorem is a basic principle of transitions from equilibrium states. It provides sufficient conditions and a basic classification for transitions of nonlinear dissipative systems (see Theorem 2.1.3 of \cite{Ma2015}).
\begin{thm}[Classification of transition types]
Consider the system \eqref{eq:operator} with , if it satisfies PES then it always undergoes a dynamic transition from $(\bu_0, \mu) = (0,\mu_0)$ (w.l.o.g, we can set $0$ to be its steady state), and there is a neighborhood $U\subset X$ of $\bu = 0$ such that the transition is one of the following three types:
\begin{enumerate}
\item \textbf{Continuous Transition}: there exists an open and dense set $\tilde{U}_{\mu}\subset U$ such that for every $\phi \in \tilde{U}_{\mu}$ , the solution $\bu_{\mu}$ of \eqref{eq:operator} satisfies
\beaa
\lim_{\mu\to\mu_0} \limsup_{t\to\infty}||\bu_{\mu}(t,\phi)|| = 0.
\eeaa
\item \textbf{Jump Transition}: for every $\mu_0 < \mu < \mu_0  \epsilon$ with some $\epsilon > 0$, there is an open and dense set $\tilde{U}_{\mu}\subset U$ and a number $\delta > 0$ independent of $\mu$ such that for any $\phi \in \tilde{U}_{\mu}$, 
\beaa
\limsup_{t\to\infty}||\bu_{\mu}(t,\phi)|| \ge \delta > 0,
\eeaa
\item \textbf{Mixed Transition}:  for every $\mu_0 < \mu < \mu_0  \epsilon$ with some $\epsilon > 0$, $U$ can be decomposed into two open (not necessarily connected) sets $U_1^{\mu}$ and $U_2^{\mu}$: $\bar{U} = \bar{U}^{\mu}_1	\cup \bar{U}^{\mu}_2$, $\o = U^{\mu}_1 \cap U^{\mu}_2$ such that
\beaa
&\lim_{\mu\to\mu_0} \limsup_{t\to\infty}||\bu_{\mu}(t,\phi)|| = 0, & \forall \phi \in U_1^{\mu}\\
&\limsup_{t\to\infty}||\bu_{\mu}(t,\phi)|| \ge \delta > 0 & \forall \phi \in U_1^{\mu},
\eeaa
here $U_1^{\mu}$ and $U_2^{\mu}$ are called metastable domains.
\end{enumerate}
\end{thm}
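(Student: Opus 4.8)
\begin{spf}
The plan is to reduce the infinite-dimensional problem \eqref{eq:operator} to a finite-dimensional ordinary differential equation on the critical space $E_1$ by a center manifold theorem, and then to obtain the trichotomy by reading off the local phase portrait of the reduced flow at the critical value $\mu_0$. The three types in the statement will turn out to be precisely the three possible behaviors of a bifurcated local attractor of that reduced flow as $\mu \to \mu_0^+$.

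\textbf{Reduction.} Since $L_\mu = A_\mu + B_\mu$ is a linear completely continuous field, Theorem \ref{thm:spectral} tells us its spectrum is a sequence of isolated eigenvalues of finite multiplicity with $\mathrm{Re}\,\beta_i \to -\infty$. Combining this with PES (Definition \ref{def:PES}) and the fact that $\bu = 0$ is stable for $\mu$ slightly below $\mu_0$, continuity of the spectrum gives, for $\mu$ in a neighborhood of $\mu_0$, a uniform spectral gap: $\{\beta_1,\dots,\beta_m\}$ stay in a small ball around the imaginary axis while $\mathrm{Re}\,\beta_i \le -\gamma < 0$ for all $i \ge m+1$. This is exactly the hypothesis of the center manifold theorem of \cite{Ma2015}; it produces a $C^r$ locally invariant manifold $\bu = \Phi_\mu(x)$, $x \in E_1$, with $\Phi_\mu(0)=0$ and $D\Phi_\mu(0)=0$, toward which $E_2$ is exponentially attracting and which depends continuously on $\mu$ near $\mu_0$. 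Writing $P_1$ for the projection onto $E_1$ along $E_2$ and substituting, \eqref{eq:operator} reduces on the manifold to
\[
\frac{dx}{dt} = L_\mu x + P_1 G_\mu\big(x + \Phi_\mu(x)\big) =: \mathcal{F}_\mu(x), \qquad x \in E_1 .
\]
Because $E_2$ is exponentially stable and the manifold attracts a full neighborhood $U$ of $\bu = 0$, every orbit of \eqref{eq:operator} starting in $U$ shadows an orbit of the reduced equation up to an exponentially decaying error, so $\limsup_{t\to\infty}\|\bu_\mu(t,\phi)\|_X$ and $\limsup_{t\to\infty}\|x_\mu(t,P_1\phi)\|_{E_1}$ are comparable and the transition type of the full system at $(0,\mu_0)$ equals that of the reduced finite-dimensional system.

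\textbf{Classification.} At $\mu = \mu_0$ the linear part $L_{\mu_0}|_{E_1}$ has all eigenvalues on the imaginary axis, so the stability of the origin of $\dot x = \mathcal F_{\mu_0}(x)$ is decided by higher-order terms. By the attractor bifurcation theorem of \cite{Ma2015}, for $\mu_0 < \mu < \mu_0 + \epsilon$ the reduced system carries a local attractor $\Sigma_\mu \subset U$ bifurcated from $(0,\mu_0)$, attracting an open and dense set $\tilde U_\mu \subset U$. Set $d(\mu) = \max_{x \in \Sigma_\mu}\|x\|$. I would then argue that there are three mutually exclusive and exhaustive cases:
\begin{enumerate}
\item if the origin is Lyapunov asymptotically stable for $\dot x = \mathcal F_{\mu_0}(x)$, then $d(\mu)\to 0$ as $\mu\to\mu_0^+$, which yields the continuous transition with metastable set $\tilde U_\mu$;
\item if $\bu = 0$ does not lie in the closure of $\bigcup_{\mu\downarrow\mu_0}\Sigma_\mu$, i.e.\ $\liminf_{\mu\to\mu_0^+} d(\mu)=\delta>0$, one gets the jump transition;
\item otherwise $\Sigma_\mu$ splits, for small $\mu-\mu_0$, into a piece contracting to $\bu=0$ and a piece bounded away from it, whose basins furnish the metastable domains $U_1^\mu$, $U_2^\mu$ and the transition is mixed.
\end{enumerate}
Transporting these statements back through the center manifold, and using the $C^1$-continuity of $\Phi_\mu$ for the uniformity needed in the $\mu\to\mu_0$ limit of case 1, reproduces exactly the three alternatives in the theorem.

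\textbf{Main obstacle.} The reduction is routine once the spectral gap is established; the genuine difficulty is the exhaustiveness together with the topological quality of the basins in the classification step. One must show that near a possibly highly degenerate equilibrium of the reduced vector field the bifurcated invariant set always falls into one of the three patterns above, and that the attracting sets $\tilde U_\mu$, $U_1^\mu$, $U_2^\mu$ are indeed open and dense. This relies on the upper/lower semicontinuity of the bifurcated attractor in $\mu$ and on the dissipative structure of \eqref{eq:operator}, rather than on any explicit computation.
\end{spf}
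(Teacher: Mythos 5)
The paper does not prove this statement at all: it is quoted verbatim as Theorem 2.1.3 of the Ma--Wang monograph \cite{Ma2015}, so there is no in-paper proof to compare against. Judged on its own terms, your sketch starts along the standard lines (center manifold reduction to $E_1$, then read off the reduced flow), and that first half is sound given the spectral gap supplied by Theorem \ref{thm:spectral} and PES. The problem is in the classification step.

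The genuine gap is your invocation of the attractor bifurcation theorem to produce a bifurcated local attractor $\Sigma_\mu$ in \emph{all} cases. That theorem requires the additional hypothesis that $\bu=0$ be locally asymptotically stable for the reduced flow at $\mu=\mu_0$; this hypothesis is exactly what characterizes the continuous (Type-I) transition. In the jump case no local attractor bifurcates from the origin inside $U$ --- orbits simply leave the neighborhood --- so your case (2), phrased as ``$\bu=0$ does not lie in the closure of $\bigcup_{\mu\downarrow\mu_0}\Sigma_\mu$,'' presupposes an object that need not exist, and your trichotomy becomes circular. Relatedly, you flag the exhaustiveness of the three alternatives and the open-and-dense / uniform-$\delta$ structure of the basins as ``the main obstacle'' and then do not address it; but that exhaustiveness \emph{is} the content of the theorem. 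The actual argument in \cite{Ma2015} does not route through attractor bifurcation: it first shows the transition must occur (instability of $\bu=0$ for $\mu>\mu_0$ from PES), and then establishes the trichotomy directly from the dichotomy, for each initial datum in $U$, between $\limsup_{t\to\infty}\|\bu_\mu(t,\phi)\|$ tending to $0$ uniformly as $\mu\to\mu_0^+$ or staying bounded below, together with a genericity argument giving the open dense sets. You would need to supply that argument, or restrict your attractor-based reasoning to the continuous case only.
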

 \begin{figure}[!ht]
	\label{fig:GS_2d}
 	\centering
 	\includegraphics[width = .9\textwidth]{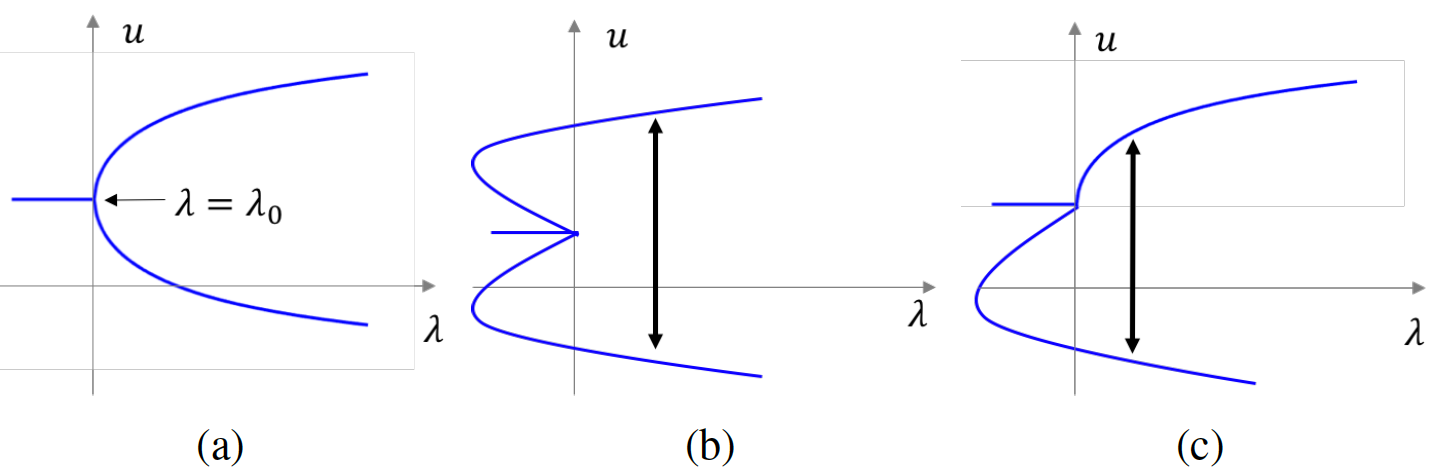}
 	\caption{\textbf{An illustration of different transition types}. (a): continuous; (b): jump and (c): mixed.}
 \end{figure}

\subsection{The eigenvalue problem}\label{sec:eigenvalue}
Since the operator $L_{\mu}$ is defined on a Hilbert space, and its range is on another Hilbert space, we need to introduce how to define and solve eigenvalues problem of a infinite dimensional space.

Denote $\mu_{k}$ and $e_{k}$ to be eigenvalues and eigenvectors of $-\Delta$ on $H$ respectively, due to the spectral decomposition theory \cite{Ma2005}, $\{e_{k}\}$ is a base of $H_{1}$, where $e_{k}$ and $\mu_{k}$ satisfy the following equations
\bea
\begin{cases}
   -\Delta e_{k}=\mu_{k}e_{k},
   \\[5pt] e_{k}=0\text{~or~}\pd{e_k}{t}=0~on~\partial\Omega, \quad k \ge 0
\end{cases}
\eea
and $0=\mu_{0}<\mu_{1}<\mu_{2}< \ldots$ . By the eigenvalue $ \{\mu_{k}\}$ of $-\Delta$,
 we can get the eigenvalues $\beta_{k}^{(i)}$ and the eigenvectors of $L_{\mu}$ satisfying the following equations
\begin{eqnarray*}
\left\{
   \begin{array}{ll}
   D_{u}\Delta( \varepsilon_{1i}^{k}e_{k})+a_{11} \varepsilon_{1i}^{k}e_{k}+a_{12}\varepsilon_{2i}^{k}e_{k}=\beta_{k}^{(i)}\varepsilon_{1i}^{k}e_{k}
   \\[5pt] D_{v}\Delta( \varepsilon_{2}^{k}e_{k})+a_{21} \varepsilon_{1i}^{k}e_{k}+a_{22}\varepsilon_{2i}^{k}e_{k}=\beta_{k}^{(i)}\varepsilon_{2i}^{k}e_{k}
\end{array}
\right.
\end{eqnarray*}
where $k=0,1,2,3,4,\ldots$, and $i=1,2$.

Denote
\begin{equation}\label{neq:turing_transition}
M_{\mu_{k}}=
\left(
  \begin{array}{cc}
    a_{11}-\mu_{k}D_{u} & a_{12} \\
    a_{21} & a_{22}-\mu_{k}D_{v} \\
  \end{array}
\right),
\end{equation}
then all the eigenvalues $\beta_{k}^{(i)}$ of $L_{\mu}$ are all eigenvalues of $ M_{\mu_{k}} $. It is easy to see that $\beta_{k}^{(i)}(i=1,2)$ are the two solutions of the following equation
\begin{align}\label{root}
  \mu^{2}-Tr(M_{\mu_{k}})\mu+
  det(M_{\mu_{k}})=0,
 \end{align}
and
\begin{align*}
&\beta_{k}^{(2)}=\frac{Tr(M_{\mu_{k}})+
\sqrt{Tr(M_{\mu_{k}})^{2}-4det(M_{\mu_{k}})}}{2},\\
&\beta_{k}^{(1)}=\frac{Tr(M_{\mu_{k}})-
\sqrt{Tr(M_{\mu_{k}})^{2}-4det(M_{\mu_{k}})}}{2},
\end{align*}
where
\begin{align}\label{eq:turing_trace}
   &Tr(M_{\mu_{k}})=a_{11}+a_{22} -\mu_{k}(D_{u}+D_{v}),
  \\ &\label{eq:turing_det}det(M_{\mu_{k}})=D_{u}D_{v}\mu_{k}^{2}-(a_{22}D_{u}
  +a_{11}D_{v})\mu_{k}+det(A).
\end{align}


\subsection{Center manifold reduction}\label{sec:cmr}
In physical science, it is often crucial to determine the asymptotic behavior of a system at the critical threshold. For this purpose and for determining the structure of the local attractor representing the transition states, the most natural approach is to project the underlying system to the space generated by the most unstable modes, preserving the dynamic transition properties. This is achieved with center manifold reduction \cite{TemamBook1997,WigginsBook2003,GuckenheimerBook2013,Ma2015}.

Let $X_1$ and $X$ be two Banach spaces with $X_1 \subset X$ a dense and compact inclusion. Consider the following one-parameter nonlinear evolution equation again (in the remaining of this chapter, we do not distinguish $G(\cdot, \mu)$ and $G_{\mu}(\cdot)$):
\bea
\begin{aligned}
&\pd{\bu}{t} = L_{\mu} \bu +G(\bullet,\mu),\\
&\bu(0,t) = \bu_0(t), \quad \bx \in \Omega, \mu \in \mathbb{R}^1
\end{aligned}\label{eq:cmr}
\eea
where $L_{\mu} = A_\mu + B_\mu : X_1 \to X$ is a \textit{linear completely continuous fields}, (i.e. $A$ is a linear homeomorphism and $B_\mu$ a linearly compact operator) depending continuously on $\mu$. Suppose that the system \eqref{eq:cmr} satisfies PES \ref{def:PES},  by the \textit{Spectral Decomposition Theorem} \ref{thm:spectral},  $L_{\mu}$ can be decomposed into $L_{\mu} = L^{(1)}_{\mu} + L^{(2)}_{\mu}$ such that for every $\mu$ sufficiently close to $\mu_0$,
\bea
\begin{aligned}\label{eq:cmr_cond}
L^{(1)}_{\mu} = L_{\mu}|_{E_1} : E_1 \to \tilde{E}_1,\\
L^{(2)}_{\mu} = L_{\mu}|_{E_2} : E_2 \to \tilde{E}_2,
\end{aligned}
\eea
where $\tilde{E}_i$ is the completion of $E_i \subset X_1$ in $X$, the eigenvalues of $L^{(1)}_{\mu}$ have nonnegative real parts and $L^{(2)}_{\mu}$ have negative real parts at $\mu = \mu_0$. Therefore, \eqref{eq:cmr} can be written as
\bea
\begin{aligned}\label{eq:cmr_decompose}
\pd{\bu_1}{t} =  L^{(1)}_{\mu} \bu_1 + G_1(\bu_1, \bu_2, \mu),\\
\pd{\bu_2}{t} =  L^{(2)}_{\mu} \bu_2 + G_2(\bu_1, \bu_2, \mu).
\end{aligned}
\eea
where $\bu = \bu_1 + \bu_2$, $\bu_1 \in E_1$, $\bu_2 \in E_2$, $G_i(\bu_1, \bu_2, \mu) = P_i G(\bu, \mu)$ and $P_i : X \to E_i$ are canonical projections.

Clearly, in finite dimensional case, $L_{\mu}$ is just a $n \times n$ matrix, denote it by $A$. So \eqref{eq:cmr_decompose} can be expressed as
\bea
\begin{aligned}\label{eq:cmr_decompose_finite}
\frac{d \bu_1}{dt} =  A^{(1)}_{\mu} \bu_1 + G_1(\bu_1, \bu_2, \mu),\\
\frac{d \bu_2}{dt} =  A^{(2)}_{\mu} \bu_2 + G_2(\bu_1, \bu_2, \mu).
\end{aligned}
\eea

The followings are the well-known \textit{center manifold theorems}. Theorem \ref{thm:cmr_finite} can be found in standard books for bifurcation theory, e.g. \cite{WigginsBook2003,GuckenheimerBook2013}. Theorem \ref{thm:cmr_infinite} can be found in \cite{TemamBook1997,Ma2005}.

\begin{thm}[Finite dimensional case]\label{thm:cmr_finite}
Suppose that all the eigenvalues of A have non-negative real parts, and all the eigenvalues of B have negative (or positive) real parts. Then, for the system \label{eq:cmr_decompose_finite} with the condition that all eigenvalues of $A^{(1)}$ is non-negative (resp. non-positive) and eigenvalues of $A^{(2)}$ negative (resp. positive), then there exists a $C^r$ function
\beaa
\Phi(\cdot, \mu) : \Omega \to \mathbb{R}^{n-m}; \Omega \subset \mathbb{R}^{m} \text{~a~neighborhood~of~} \bx = 0,
\eeaa
such that $\Phi(\bx, \mu)$ is continuous on $\mu$ and
\begin{enumerate}
\item $\Phi(0, \mu) = 0, \pd{\Phi}{x}(0,\mu) = 0$;
\item the set
\beaa
M_{\mu} := \{(\bx, \by) | \bx\in \Omega\subset \mathbb{R}^{m}, \by = \Phi(\bx, \mu)\}
\eeaa
called the \text{(local) center manifold}, is a local invariant manifold of \eqref{eq:cmr_decompose_finite};
\item if $M_{\mu}$ is positive invariant (or negative invariant), namely $\bz(t, \phi) \in M_{\mu}(\bz(-t,  \phi) \in M_{\mu}), \forall t \ge 0 $provided $\phi \in M_{\mu}$, then $M_{\mu}$ is an attracting set of \eqref{eq:cmr_decompose_finite}(or a repelling set) , i.e. there is a neighborhood $U \subset \mathbb{R}^n$ of $M_{\mu}$, as $\phi \in U$, we have 
\beaa
\lim_{t\to\infty (\text{~or~}\infty)}dist(\bz(t,\phi), M_{\mu}) = 0
\eeaa
where $\bz(t,\phi) = \{\bx(t,\phi),\by(t,\phi)\}$ is the solution of (5.2.3) with the initial condition $z(0, \phi) = \phi$.
\end{enumerate}
\end{thm}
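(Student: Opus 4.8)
The plan is to follow the classical Lyapunov--Perron construction adapted to the block form \eqref{eq:cmr_decompose_finite}, indicating only the main steps since the details are standard \cite{WigginsBook2003,GuckenheimerBook2013,TemamBook1997}. First I would \emph{localize}: multiply $G_1,G_2$ by a smooth cut-off $\chi(\bx/\delta)$ that equals $1$ on $\{|\bx|\le\delta\}$ and is supported in $\{|\bx|\le 2\delta\}$, obtaining modified nonlinearities $\tilde{G}_i$ that coincide with $G_i$ near $\bx=0$, are globally $C^r$, and --- since $G_i(0,\mu)=0$ and $D_{\bx}G_i(0,\mu)=0$ --- have global Lipschitz constant $\varepsilon(\delta)\to 0$ as $\delta\to 0$. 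Because the center manifold is a local object, any $\Phi$ constructed for the modified system restricts to one for \eqref{eq:cmr_decompose_finite} on a neighborhood of $\bx=0$.

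Next I would record the \emph{spectral gap} implied by the hypotheses: since the eigenvalues of $A^{(1)}$ have non-negative real parts and those of $A^{(2)}$ negative real parts, there are constants $C\ge 1$, $\beta>0$ and an arbitrarily small $\eta>0$ with $\|e^{A^{(1)}t}\|\le C e^{\eta|t|}$ for $t\le 0$ and $\|e^{A^{(2)}t}\|\le C e^{-\beta t}$ for $t\ge 0$. For a candidate $\Phi:\Omega\to\mathbb{R}^{n-m}$ with $\Phi(0,\mu)=0$, $D_{\bx}\Phi(0,\mu)=0$ and Lipschitz constant at most $1$, let $\bx(\cdot)$ be the solution of the reduced equation $\dot{\bx}=A^{(1)}\bx+\tilde{G}_1(\bx,\Phi(\bx,\mu),\mu)$ with $\bx(0)=\bx_0$, and define
\[
\mathcal{T}[\Phi](\bx_0,\mu)=\int_{-\infty}^{0}e^{-A^{(2)}s}\,\tilde{G}_2\bigl(\bx(s),\Phi(\bx(s),\mu),\mu\bigr)\,ds .
\]
The decay of $e^{-A^{(2)}s}$ makes the integral converge, and the estimates above together with the smallness of $\varepsilon(\delta)$ show, for $\delta$ small enough, that $\mathcal{T}$ maps the complete metric space of such Lipschitz functions into itself and is a contraction in the sup norm. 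Its unique fixed point is the desired $\Phi$, and $\Phi(0,\mu)=0$, $D_{\bx}\Phi(0,\mu)=0$ hold because $\tilde{G}_2$ and its differential vanish at the origin; this is conclusion (1).

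Conclusions (2) and (3) are then read off from the fixed-point identity. For local invariance, if $\bx(t)$ solves the reduced equation on $E_1$ then the identity above and the semigroup property of $e^{A^{(2)}t}$ force $\by(t):=\Phi(\bx(t),\mu)$ to solve the $E_2$-component of \eqref{eq:cmr_decompose_finite}; since the nonlinearities are unmodified on $\{|\bx|\le\delta\}$, $M_\mu=\{(\bx,\Phi(\bx,\mu))\}$ is genuinely invariant there. For the attracting (resp.\ repelling) property, write a general solution as $\bz(t,\phi)=(\bx(t),\by(t))$ and estimate $\by(t)-\Phi(\bx(t),\mu)$ by variation of parameters: the exponential decay $e^{-\beta t}$ on $E_2$ (resp.\ the corresponding decay on $E_1$ as $t\to-\infty$ in the ``resp.''\ case) makes $\mathrm{dist}(\bz(t,\phi),M_\mu)\to 0$ for $\phi$ near $M_\mu$.

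The step I expect to be the main obstacle is the $C^r$ regularity of $\Phi$: the contraction above only lives in the $C^0$ topology, so one must either differentiate the fixed-point equation and check that the induced equations for $D\Phi,\dots,D^r\Phi$ are again contractions --- which is where a small exponent is lost at each differentiation, so a gap condition of the form $\beta-(k+1)\eta>0$ for all $k\le r$ must be arranged by shrinking $\eta$ --- or invoke the fiber contraction theorem on the bundle of $r$-jets. Everything else (the dichotomy estimate and the contraction bound) is routine once $C,\beta,\eta,\delta$ are fixed, so in the final write-up I would carry out those two estimates explicitly and cite \cite{WigginsBook2003,GuckenheimerBook2013} for the regularity bookkeeping.
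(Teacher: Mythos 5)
The paper does not prove this theorem itself --- it states it as a classical result and defers to \cite{WigginsBook2003,GuckenheimerBook2013,TemamBook1997} --- and your sketch is a faithful outline of exactly the Lyapunov--Perron argument those references use: localize by a cut-off, establish the exponential dichotomy estimates, contract on the space of small-Lipschitz graphs via the integral operator $\mathcal{T}$, read off invariance and attraction from the fixed-point identity, and obtain $C^r$ regularity by fiber contraction under a spectral gap condition. Your identification of the smoothness step as the one delicate point (the condition $\beta-(k+1)\eta>0$ for $k\le r$) is accurate, so the proposal is correct and takes essentially the same route as the proof the paper cites.
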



\begin{thm}[Infinite dimensional case]\label{thm:cmr_infinite}
Suppose \eqref{eq:cmr}-\eqref{eq:cmr_decompose}, and assume $X_1$ and $X$ are Hilbert spaces, then there exists a neighborhood of $\mu$ given by $|\mu-\mu_0| < \delta$ for some $\delta > 0$, a neighborhood $O_{\mu} \subset E^{\mu}_1$ of $x = 0$, and a $C^1$ function $\Phi(\cdot,X) : O_{\mu} \to E^{\mu}_2(\alpha)$ depending continuously on $\mu$, where $\bar{E}^{\mu}_2(\alpha)$ is the completion of $E^{\mu}_2(\alpha)$ in the $X_{\alpha}$-norm, with $0 < \alpha < 1$ such that
\begin{enumerate}
\item $\Phi(0, \mu) = 0, \pd{\Phi}{x}(0,\mu) = 0$;
\item the set
\beaa
M_{\mu} := \{(\bx, \by) | \bx\in O_{\mu}, \by = \Phi(\bx, \mu) \in E^{\mu}_2(\alpha)\}
\eeaa
called the \text{(local) center manifold}, is a local invariant manifold of \eqref{eq:cmr};
\item $(\bx_{\mu}(t), \by_{\mu}(t))$ is a solution to \eqref{eq:cmr}, then there is a $\beta_\mu > 0 $ and $C_\mu > 0$ with $k_\mu$ depending on $(\bx_{\mu}(0), \by_{\mu}(0))$ such that 
\bea
||\by_{\mu}(t) - \Phi(\bx_{\mu}(t),\mu)||_{X} \le C_{\mu} \exp{(-\beta_\mu t)}
\eea
\end{enumerate}
\end{thm}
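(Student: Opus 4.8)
\begin{spf}
The plan is to realize the center manifold as the graph of a fixed point of the Lyapunov--Perron integral operator, following the parabolic framework of \cite{TemamBook1997,Ma2005}. First I would exploit the structure $L_\mu = A_\mu + B_\mu$: since $A_\mu$ is a sectorial linear homeomorphism and $B_\mu$ is compact, $L_\mu$ generates an analytic semigroup on $X$, and by the spectral decomposition of Theorem \ref{thm:spectral} together with PES (Definition \ref{def:PES}) we get the splitting $X = \bar E_1 \oplus E_2$ with $\dim E_1 = m < \infty$, hence the form \eqref{eq:cmr_decompose}. The two estimates I need are: on the finite--dimensional part $\|e^{tL^{(1)}_\mu}\|$ is at most polynomial in $t$ at $\mu = \mu_0$ (hence $O(e^{\gamma|t|})$ for any $\gamma>0$, with growth rate $O(|\mu-\mu_0|)$ for $\mu$ nearby); and on the stable part the analytic smoothing estimate $\|e^{tL^{(2)}_\mu}\|_{\mathcal L(X, X_\alpha)} \le C\, t^{-\alpha} e^{-\beta t}$ holds for $t>0$, where $\beta > 0$ is the stable spectral gap and $0 < \alpha < 1$. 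The factor $t^{-\alpha}$ is integrable near $0$, which is exactly what lets the scheme close in the $X_\alpha$-norm.

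Next I would globally truncate the nonlinearity. Since $G(\cdot,\mu) = o(\|u\|_{X_\alpha})$, replacing $G$ by $\tilde G(u,\mu) = \chi(\|u\|_{X_\alpha}/\varepsilon)\,G(u,\mu)$ with a smooth cut-off $\chi$ gives $\tilde G$ that coincides with $G$ near $0$, vanishes outside $B_{2\varepsilon}$, and has global Lipschitz constant $\kappa(\varepsilon)$ with $\kappa(\varepsilon)\to 0$ as $\varepsilon\to 0$; a local center manifold of \eqref{eq:cmr} is then just the restriction of a global center manifold of the truncated system. On the weighted space $C_\eta = \{ w \in C((-\infty,0],X_\alpha) : \sup_{t\le 0} e^{-\eta t}\|w(t)\|_{X_\alpha} < \infty\}$, with $\eta$ fixed strictly between the (small) growth rate of $e^{tL^{(1)}_\mu}$ and $\beta$, I would define for each small $x \in E^{\mu}_1$
\[
(\mathcal T_x w)(t) = e^{tL^{(1)}_\mu}x + \int_0^t e^{(t-s)L^{(1)}_\mu} P_1\tilde G(w(s),\mu)\,ds + \int_{-\infty}^t e^{(t-s)L^{(2)}_\mu} P_2\tilde G(w(s),\mu)\,ds,
\]
and show, using the two semigroup bounds and the integrability of $s^{-\alpha}e^{-\beta s}$, that $\mathcal T_x$ maps a small ball of $C_\eta$ into itself and is a contraction there, provided $\varepsilon$ and $\delta := |\mu-\mu_0|$ are small enough. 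The unique fixed point $w_x$ is a solution of \eqref{eq:cmr_decompose} bounded in the $\eta$-weighted norm on $(-\infty,0]$ with $P_1 w_x(0) = x$, and I set $\Phi(x,\mu) := P_2 w_x(0) \in \bar E^{\mu}_2(\alpha)$. Conclusion (1), $\Phi(0,\mu) = 0$ and $\pd{\Phi}{x}(0,\mu) = 0$, follows because $\tilde G$ has vanishing value and differential at $u=0$; conclusion (2), local invariance of $M_\mu$, follows because the Lyapunov--Perron characterization is invariant under time translation, so an orbit starting on $M_\mu$ stays on it as long as it remains in $O_\mu$.

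For the $C^1$ regularity and the continuous dependence on $\mu$ I would use a uniform (fiber) contraction argument: differentiating the fixed-point identity formally in $x$ produces a second contraction whose fixed point is the candidate $\pd{w_x}{x}$, and the uniform contraction principle (as in \cite{TemamBook1997}) upgrades this to genuine $C^1$ dependence of $w_x$, hence of $\Phi$, on $(x,\mu)$. Finally, conclusion (3), the exponential tracking estimate, is obtained by setting $z(t) = \bu_2(t) - \Phi(\bu_1(t),\mu)$ for an arbitrary solution of \eqref{eq:cmr_decompose} near $0$: using the invariance of $M_\mu$ one derives a variation-of-constants identity for $z$ on $[0,\infty)$ whose linear flow decays like $e^{-\beta t}$ and whose forcing is Lipschitz-small in $z$, and a Gronwall-type estimate then yields $\|z(t)\|_X \le C_\mu e^{-\beta_\mu t}$ for some $0 < \beta_\mu < \beta$. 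I expect the main obstacle to be the bookkeeping of the estimates in the second step --- choosing $\eta$ strictly inside the spectral gap, bounding the $s^{-\alpha}$-singular convolution uniformly for $\mu$ near $\mu_0$, and calibrating $\varepsilon$ so that $\mathcal T_x$ and its formally differentiated version are simultaneously contractions --- since the remaining points (invariance, $\Phi(0)=0$, and the final Gronwall argument) are comparatively soft.
\end{spf}
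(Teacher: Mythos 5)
The paper itself offers no proof of this theorem: it is imported as a known result, with the reader directed to \cite{TemamBook1997,Ma2005}, so there is no in-paper argument to compare yours against. Your Lyapunov--Perron sketch (cut-off of the nonlinearity, dichotomy estimates with the integrable singularity $t^{-\alpha}e^{-\beta t}$ on the stable part, fixed point in a weighted space, fiber contraction for $C^1$ regularity, and a Gronwall argument for the exponential tracking) is precisely the standard proof given in those references, and the outline is sound. The one concrete slip is the sign of the weight defining $C_\eta$: with $\eta\in(0,\beta)$ and $t\le 0$, the condition $\sup_{t\le 0}e^{-\eta t}\|w(t)\|_{X_\alpha}<\infty$ forces $\|w(t)\|\lesssim e^{-\eta|t|}\to 0$ as $t\to-\infty$, which excludes exactly the slowly growing backward orbits that make up the center manifold; you want $\sup_{t\le 0}e^{\eta t}\|w(t)\|_{X_\alpha}<\infty$, permitting growth of order $e^{\eta|t|}$, so that the contraction constant involves the gap $\beta-\eta$ rather than collapsing the space to the stable manifold.
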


\begin{rmk}[1]
It is noticeable that, Theorem \ref{thm:cmr_infinite} only works when $X$ and $X_1$ are Hilbert spaces, due to their spectral properties. By the \textit{spectral decomposition theorem} \ref{thm:spectral}, however, we can extend the result of Theorem \ref{thm:cmr_infinite} to any totally continuous field $L$ in Banach spaces. Details can be found in Ma and Wang's book \cite{Ma2005}.
\end{rmk}

\begin{rmk}[2]
Both Theorem \ref{thm:cmr_finite} and \ref{thm:cmr_infinite} ascertained the existence of center manifold functions for finite and infinite dimensional dynamical systems. However, they do not provide a explicit way for constructing the central manifold. A systematic construction can be found in Section 3 of \cite{Ma2005} and Appendix A of \cite{Ma2015}, which is skipped in this context.
\end{rmk}

While Theorems \ref{thm:cmr_finite} and \ref{thm:cmr_infinite} show the significance of a center manifold function, they do not tell how to explicitly find these functions. In general, due to our knowledge, the only systematic way of calculating center manifold functions are to assume polynomial structures of $Gs'$ in \eqref {eq:cmr}, or to use Taylor's expansion near some steady state and critical parameter. For detailed calculation, please refer to Chapter 18 of \cite{WigginsBook2003} for finite dimensional cases, and Section 3.2 of \cite{Ma2005} or Appendix A of \cite{Ma2015}. For example, if $G$ in \eqref{eq:cmr} has the following form
\bea\label{eq:ex_Gn}
G(\bu,\mu) = \sum_{n=k}^\infty G_n(\bu,\mu),
\eea
for some $k\ge 2$, where $G_n : \prod_{i=1}^n X_1 \to X$ is an $n$-multiple linear mapping, and $G_n(\bu,\mu):=G_n(\bu,...,\bu,\mu)$. Then we have

\begin{thm}[Theorem 3.8 of \cite{Ma2005}]
If $L_\mu$ is a sectorial operator and $G$ satisfies \eqref{eq:ex_Gn}, then the center manifold function $\Phi(\bx,\mu)$ in Theorem \ref{thm:cmr_infinite} can be expressed as
\bea\label{eq:cmr_fun}
\Phi(\bx,\mu) = (-L^{(2)}_{\mu})^{-1} P_2 G_k(\bx,\mu) + O(|Re~\beta(\mu)|\cdot ||\bx||^k) + o(||\bx||^k)
\eea
where $ P_2 : X_1 \to \tilde{E}_2$ is the canonical projection, $\bx \in E_1$ and $\beta(\mu) = (\beta_1(\mu),...,\beta_m(\mu))$ are the eigenvalues of $L^{(1)}_{\mu}$.
\end{thm}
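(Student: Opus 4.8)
\medskip
\noindent\textbf{Proof proposal.}
The plan is to read the approximation off the invariance identity that the center manifold $\by=\Phi(\bx,\mu)$ of Theorem \ref{thm:cmr_infinite} must satisfy. Since $M_\mu=\{(\bx,\by):\by=\Phi(\bx,\mu)\}$ is a local invariant manifold of \eqref{eq:cmr_decompose}, every solution lying on it obeys $\bu_2(t)=\Phi(\bu_1(t),\mu)$; differentiating in $t$ and substituting both lines of \eqref{eq:cmr_decompose} gives, with $\bx$ denoting the $E_1$-variable,
\bea
\pd{\Phi}{x}(\bx,\mu)\bigl(L^{(1)}_\mu\bx+G_1(\bx,\Phi,\mu)\bigr)=L^{(2)}_\mu\Phi+G_2(\bx,\Phi,\mu).\label{eq:pp_inv}
\eea
By PES (Definition \ref{def:PES}) the spectrum of $L^{(2)}_{\mu_0}$ lies in $\{Re\,z<0\}$, so $0$ belongs to its resolvent set; since $L_\mu$ is sectorial, $(-L^{(2)}_\mu)^{-1}$ is then bounded from $X$ into the fractional power space $X_\alpha$, with a bound uniform for $\mu$ near $\mu_0$. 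Hence \eqref{eq:pp_inv} rearranges to the fixed-point relation
\bea
\Phi=(-L^{(2)}_\mu)^{-1}\bigl[G_2(\bx,\Phi,\mu)-\pd{\Phi}{x}(\bx,\mu)\bigl(L^{(1)}_\mu\bx+G_1(\bx,\Phi,\mu)\bigr)\bigr].\label{eq:pp_fix}
\eea

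The second step is a degree-by-degree order count in $\|\bx\|$. Theorem \ref{thm:cmr_infinite} gives $\Phi(0,\mu)=0$ and $\pd{\Phi}{x}(0,\mu)=0$, so $\Phi(\bx,\mu)=O(\|\bx\|^2)$ to start with, and the hypothesis $G=\sum_{n\ge k}G_n$ with $k\ge2$ makes $G_1,G_2$ evaluated on the manifold $O(\|\bx\|^k)$. I would then bootstrap the vanishing of the lower-order jets of $\Phi$: equating homogeneous parts of degree $j$ in \eqref{eq:pp_inv} for $2\le j<k$, every contribution of $G$ has degree $\ge k>j$, so the degree-$j$ term $\Phi_j$ satisfies the \emph{homogeneous} homological equation $\pd{\Phi_j}{x}(\bx)\,L^{(1)}_\mu\bx=L^{(2)}_\mu\Phi_j(\bx)$; since $\sigma(L^{(2)}_\mu)$ stays off the imaginary axis and the critical eigenvalues $\beta_i(\mu)$ of $L^{(1)}_\mu$ are small near $\mu_0$ (real, vanishing at $\mu_0$ in the Turing setting), the induced operator on degree-$j$, $E_2$-valued homogeneous polynomials is invertible, forcing $\Phi_j\equiv0$. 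Iterating over $j=2,\dots,k-1$ yields $\Phi(\bx,\mu)=O(\|\bx\|^k)$, hence $\pd{\Phi}{x}(\bx,\mu)=O(\|\bx\|^{k-1})$, all uniformly in $\mu$ near $\mu_0$.

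Feeding these bounds into \eqref{eq:pp_fix}: inside the bracket one has $\pd{\Phi}{x}\,G_1(\bx,\Phi,\mu)=O(\|\bx\|^{k-1})\cdot O(\|\bx\|^k)=o(\|\bx\|^k)$, while $\pd{\Phi}{x}\,L^{(1)}_\mu\bx=O(\|\bx\|^{k-1})\cdot O(|Re\,\beta(\mu)|\,\|\bx\|)=O(|Re\,\beta(\mu)|\,\|\bx\|^k)$; and since $G_2(\bx,\by,\mu)=P_2G(\bx+\by,\mu)$ with $\bu=\bx+\Phi$, expanding the $k$-linear form $G_k$ and using $\Phi=O(\|\bx\|^k)$ gives $G_2(\bx,\Phi,\mu)=P_2G_k(\bx,\mu)+O(\|\bx\|^{k+1})+O(\|\bx\|^{k-1}\|\Phi\|)=P_2G_k(\bx,\mu)+o(\|\bx\|^k)$. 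Applying the bounded operator $(-L^{(2)}_\mu)^{-1}$ preserves all these orders, which is exactly \eqref{eq:cmr_fun}.

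I expect the main obstacle to be this jet bootstrap together with the regularity it tacitly presupposes: Theorem \ref{thm:cmr_infinite} only furnishes $\Phi\in C^1$, so before $\Phi$ may be expanded in homogeneous polynomials up to degree $k$ one must first upgrade its smoothness to $C^k$, which is precisely where the \emph{sectoriality} of $L_\mu$ (analytic-semigroup smoothing for $(-L^{(2)}_\mu)^{-1}$ on the scale $X_\alpha$) and the polynomial structure \eqref{eq:ex_Gn} of $G$ are genuinely needed — the bare center-manifold hypotheses do not suffice for this. A secondary, more bookkeeping-type difficulty is keeping every $O(\cdot)$-constant uniform in $\mu$ on a fixed neighborhood of $\mu_0$; this rests on the continuous dependence of $\Phi$ on $\mu$ asserted in Theorem \ref{thm:cmr_infinite}, propagated through the composition estimates for the multilinear maps $G_n$ in the $X_\alpha$-norm.
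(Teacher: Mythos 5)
The paper offers no proof of this statement: it is imported verbatim as Theorem 3.8 of \cite{Ma2005}, so there is no in-paper argument to compare yours against. Judged on its own terms, your proposal takes the invariance-equation route: differentiate $\bu_2=\Phi(\bu_1,\mu)$ along trajectories, invert $-L^{(2)}_\mu$, and match orders in $\|\bx\|$. This is a legitimate strategy, but as written it has a genuine gap, one you partly diagnose yourself. The order count hinges on showing $\Phi(\bx,\mu)=O(\|\bx\|^k)$ and $\pd{\Phi}{x}=O(\|\bx\|^{k-1})$ via a degree-by-degree jet bootstrap, yet Theorem \ref{thm:cmr_infinite} only gives $\Phi\in C^1$; a $C^1$ function with $\Phi(0,\mu)=0$ and $\pd{\Phi}{x}(0,\mu)=0$ satisfies only $\Phi=o(\|\bx\|)$ and $\pd{\Phi}{x}=o(1)$, so the homogeneous parts $\Phi_j$ for $2\le j<k$ do not exist a priori and the homological equations you solve for them are not available. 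Without that bootstrap the term $\pd{\Phi}{x}\,L^{(1)}_\mu\bx$ in your fixed-point relation is controlled only to order $o(\|\bx\|)$, far short of the $O(|Re\,\beta(\mu)|\,\|\bx\|^k)+o(\|\bx\|^k)$ you need, and the argument does not close. A second, independent defect: you bound $L^{(1)}_\mu\bx$ by $O(|Re\,\beta(\mu)|\,\|\bx\|)$, which fails when the critical eigenvalues cross the imaginary axis with nonzero imaginary part (there $|\beta_i(\mu_0)|\neq0$ while $Re\,\beta_i(\mu_0)=0$); you acknowledge this by restricting to real eigenvalues, but the theorem is stated for general $\beta(\mu)$.

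The source's proof avoids both problems by working from the Lyapunov--Perron integral representation of the center manifold function, $\Phi(\bx,\mu)=\int_{-\infty}^{0}e^{-\tau L^{(2)}_\mu}P_2\,G(\bu(\tau,\bx),\mu)\,d\tau$, where $\bu(\tau,\bx)$ is the reduced flow on $E_1$: sectoriality gives convergence of the integral, and the backward-in-time growth estimate $\|\bu(\tau,\bx)\|\le Ce^{|Re\,\beta(\mu)|\,|\tau|}\|\bx\|$ produces the leading term $(-L^{(2)}_\mu)^{-1}P_2G_k(\bx,\mu)$ together with the $O(|Re\,\beta(\mu)|\cdot\|\bx\|^k)$ correction directly, with no need for higher-order jets of $\Phi$. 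If you want to keep your route, you must first establish $C^k$ regularity of the center manifold (a separate fixed-point argument in spaces of $k$-times differentiable maps) before the jet expansion becomes meaningful; only then does your order count go through.
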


By \eqref{eq:cmr_fun} or equation (A.1.14) of \cite{Ma2015}, it can be calculated that
\bea
\begin{aligned}\label{eq:cmr_higher}
&\Phi(\bx,\mu) = \sum_{j=m+1}^{\infty} \Phi_j(\bx,\mu) e_j\\
&\Phi_j(\bx,\mu) = -\frac{1}{\beta_j(\mu)<e_j,e_j^*>_{X,X^*}} <G_j(\bx,\mu),e^*_j>_{X,X^*} + o(k)
\end{aligned}
\eea
where $\bx = \sum_{i=1}^m x_i e_i$ and $\beta(\mu) = (\beta_1(\mu),...,\beta_m(\mu))$, $<\cdot,\cdot>_{X,X^*}$ denotes the dual product between the function space $X$ and its dual $X^*$.

%


\section{Stability and dynamic transition of Turing's systems}\label{sec:turing}
In this section, we study the stability and dynamic transition behavior of the  Turing's system \eqref{eq:model}.

\subsection{Critical parameters of the Turing's system}
The following theorem is the necessary and sufficient condition for Turing's instability of system \eqref{eq:model}. Let
\begin{align}\label{eq:ex_transition_L}
  L=-\frac{det(A)}{D_{u}D_{v}}
  +\frac{D^{2}}{4D_{u}^{2}D_{v}^{2}},
\end{align}
and assume that
\begin{equation}\label{neq:tr_det}
det(A)>0,~~Tr(A)<0
\end{equation}
 where $A$ is as in (\ref{ex:A}), and
 \begin{align}\label{eq:ex_turing_D}
 & D=D_{v}a_{11}+D_{u}a_{22}, \\
  \label{eq:A_det}&det(A)=a_{11}a_{22}-a_{12}a_{21}.
 \end{align}
\begin{thm}\label{thm:main} Under the assumption (\ref{neq:tr_det}), we obtain the following two conclusions:
(1) If $D \leq 0$ and $L \leq 0 $, system (\ref{eq:model} ) is Turing stable.
(2) If $L > 0 $, system \eqref{eq:model} generates Turing's instability if and only if
there exists an eigenvalue $\lambda_{k}$ of $ -\Delta $ such that
$-\sqrt{L}<\lambda_{k}-\frac{D}{2D_{u}D_{v}}<\sqrt{L}$.
\end{thm}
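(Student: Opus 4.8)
The plan is to reduce Turing (in)stability of \eqref{eq:model} to an elementary, mode‑by‑mode sign analysis of the roots of the characteristic quadratic \eqref{root}. Recall from Section \ref{sec:eigenvalue} that the spectrum of $L_{\mu}$ is the union over $k\ge 0$ of the two eigenvalues $\beta_{k}^{(1)},\beta_{k}^{(2)}$ of the $2\times 2$ matrix $M_{\mu_{k}}$, and that $\{e_{k}\}$ is a complete basis of $H$; hence, by the principle of linearized stability, the homogeneous state $(0,0)$ loses stability exactly when $Re\,\beta_{k}^{(i)}>0$ for some $k$ and some $i$. Since by assumption $Tr(A)<0$ and $D_{u},D_{v}>0$, we have $Tr(M_{\mu_{k}})=Tr(A)-\mu_{k}(D_{u}+D_{v})<0$ for every $k$, so the two roots always have negative sum.

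First I would discard every mode except those with $det(M_{\mu_{k}})<0$. If the discriminant $Tr(M_{\mu_{k}})^{2}-4\,det(M_{\mu_{k}})$ is negative, the roots are complex conjugates with common real part $Tr(M_{\mu_{k}})/2<0$; if it is nonnegative and $det(M_{\mu_{k}})>0$, the roots are real with the same sign, hence both negative (their sum is negative); if $det(M_{\mu_{k}})=0$, the roots are $0$ and $Tr(M_{\mu_{k}})<0$. Only when $det(M_{\mu_{k}})<0$ are the roots real of opposite sign, so that $\beta_{k}^{(2)}>0$. Therefore \eqref{eq:model} exhibits Turing's instability if and only if $det(M_{\mu_{k}})<0$ for some $k$.

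Next I would make this sign explicit. Writing $D=D_{v}a_{11}+D_{u}a_{22}$, formula \eqref{eq:turing_det} becomes $det(M_{\mu_{k}})=q(\mu_{k})$ with $q(t):=D_{u}D_{v}\,t^{2}-Dt+det(A)$, an upward parabola whose discriminant equals $D^{2}-4D_{u}D_{v}\,det(A)=4D_{u}^{2}D_{v}^{2}L$. If $L\le 0$ (in particular under the hypotheses of part (1)) this discriminant is $\le 0$, so $q(t)\ge 0$ for all $t\ge 0$ and no mode is destabilized, giving conclusion (1); alternatively, $D\le 0$ puts the vertex of $q$ at $t\le 0$, so $q$ is increasing on $[0,\infty)$ and $q(\mu_{k})\ge q(0)=det(A)>0$. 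If instead $L>0$, the roots of $q$ are $t_{\pm}=\tfrac{D}{2D_{u}D_{v}}\pm\sqrt{L}$ — a one‑line computation since $\big(\tfrac{\sqrt{D^{2}-4D_{u}D_{v}\,det(A)}}{2D_{u}D_{v}}\big)^{2}=L$ — and $q(t)<0$ precisely for $t\in(t_{-},t_{+})$. Combining, \eqref{eq:model} generates Turing's instability iff some eigenvalue $\lambda_{k}$ of $-\Delta$ satisfies $t_{-}<\lambda_{k}<t_{+}$, i.e. $-\sqrt{L}<\lambda_{k}-\tfrac{D}{2D_{u}D_{v}}<\sqrt{L}$, which is conclusion (2). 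As a consistency check, $det(A)>0$ forces $q(0)>0$, so the homogeneous mode $\lambda_{0}=0$ never lies in $(t_{-},t_{+})$, as it must not.

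The parabola/discriminant manipulations are routine; the delicate point is the very first reduction — that instability of the infinite‑dimensional evolution \eqref{eq:model} is controlled entirely by the signs $Re\,\beta_{k}^{(i)}$. Here I would invoke the spectral‑decomposition machinery of Section \ref{sec:exchange} (Theorem \ref{thm:spectral}), which endows $L_{\mu}$ with a discrete spectrum, a complete set of eigenmodes, and real parts bounded above, together with linearized stability. I would also observe that the borderline case $det(M_{\mu_{k}})=0$ is exactly the PES threshold of Definition \ref{def:PES}: at $D_{v}=D_{v}^{*}$ the critical mode satisfies $det(M_{\mu_{k}})=0$, and increasing $D_{v}$ pushes it into $det(M_{\mu_{k}})<0$, which is what the subsequent theorem on $D_{v}^{*}$ formalizes.
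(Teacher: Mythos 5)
Your proposal is correct and follows essentially the same route as the paper: both reduce the question to the sign of $det(M_{\mu_{k}})$ via the upward parabola $h(\lambda,L)=D_{u}D_{v}\bigl[(\lambda-\tfrac{D}{2D_{u}D_{v}})^{2}-L\bigr]$ and locate its roots at $\tfrac{D}{2D_{u}D_{v}}\pm\sqrt{L}$. Your version is in fact slightly more careful, since you explicitly use $Tr(M_{\mu_{k}})<0$ for every mode to justify that a positive real part can only arise from $det(M_{\mu_{k}})<0$ --- a step the paper's necessity argument glosses over (and where its printed ``$tr(M_{\lambda_{k}})>0$'' is evidently a sign typo).
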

 \begin{proof}
 First, we prove the first assertion.
 Let
\bea\label{eq:turing:center}
\begin{aligned}
h(\lambda,L )&=D_{u}D_{v}\lambda^{2}-(D_{u}a_{22}+D_{v}a_{11})\lambda
+a_{11}a_{22}-a_{12}a_{21} \nonumber \\
&=D_{u}D_{v}[(\lambda-\frac{D}{2D_{u}D_{v}})^{2}-L ]
\end{aligned}
\eea
If $D \leq 0$ and $L \leq 0 $, combining with (\ref{eq:turing_trace} ) and \eqref{eq:turing:center}, it is easy to check that $det(M_{\lambda_{k}})=h(\lambda_{k},L)>0$ and $tr(M_{\lambda_{k}})>0$ holds true for all the eigenvalues $\{\lambda_{k}\}$ of the operator $-\Delta$, that is, all eigenvalues of $L_{\lambda}$ have negative real part, which means that system \eqref{eq:model}is Turing stable.

Secondly, we prove the second assertion.

\textbf{Sufficiency}. The condition $Tr(A)=a_{11}+a_{22}<0 $ and $det(A)=a_{11}a_{22}-a_{12}a_{21}>0$ mean that $(0,0)$ is a stable steady state of the system \eqref{eq:model} without diffusion.
Let the solutions of $h(\lambda,L)=0$ be $k_{\pm}=\frac{D}{2D_{u}D_{v}}\pm\sqrt{L}$. We can deduce from $k_{-}< \lambda_{k} <k_{+}$ that $det(M_{\lambda_{k}})<0$ , that is, there exists eigenvalue $\beta_{k}^{i_{0}}$ of $L_{\lambda}$ such that $ \beta_{k}^{i_{0}}>0$. Then $(0,0)$ is unstable steady state of system \eqref{eq:model}, that is, system \eqref{eq:model}generates Turing's instability.

\textbf{Necessity}. Based on the definition of Turing's instability, $(0,0)$ should be the stable steady state of \eqref{eq:model} without diffusion, that is, $a_{11}+a_{22}<0$ and $a_{11}a_{22}-a_{12}a_{21}>0$ hold true. $(0,0)$ is not the stable steady state of \eqref{eq:model}, which means that there exists $ \beta_{k}^{i_{0}}>0$ of $L_{\lambda}$, such that $h(\lambda_{k},L)= det(M_{\lambda_{k}})<0$. In another word, there is a eigenvalue $\lambda_{k}$ of the operator $ -\Delta$ satisfying $k_{+}<\lambda_{k}<k_{-}$, where $k_{\pm}=\frac{D}{2D_{u}D_{v}}\pm\sqrt{L}$. The proof is complete.
\end{proof}

\begin{remark} \label{rmk:polynomial}
It is also important to see that, when $h(\lambda,L) < 0$, then the system \eqref{eq:model} is Turing unstable. This provide us a way to determine the transition parameter of  \eqref{eq:model} as will be shown in Remark \ref{rmk:calculate}.
\end{remark}

\begin{remark} $D > 0$ and $L > 0 $ means $\frac{D_{v}}{D_{u}}>1 $, which is a necessary condition of the Turing's instability. It means that the inhibitor diffuses faster than the activator  if Turing's instability is achieved.
\end{remark}

The eigenvalues of the operator $-\Delta$ with Dirichlet/Neumann boundary condition in the case that $\Omega=(0,s)$ is as follows
\begin{eqnarray*}
\{\frac{\pi^{2} m^{2}}{s^{2}}: m \in N_{+} \},
\end{eqnarray*}
then we can get the following corollary.
\begin{corollary}
Under the assumptions in Theorem \ref{thm:main}, assume that $D > 0$ and $L > 0$, and taking $\Omega=(0,s)$, then system \eqref{eq:model} generates Turing's instability if and only if there exists a integer $m\in N_{+}$ such that
$-\sqrt{L}<\frac{m^{2}\pi^{2}}{s^{2}}-\frac{D}{2D_{u}D_{v}}<\sqrt{L}$.
\end{corollary}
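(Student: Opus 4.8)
The plan is to reduce the corollary to a direct substitution into Theorem \ref{thm:main}. Since the corollary explicitly assumes the hypotheses of Theorem \ref{thm:main}, namely (\ref{neq:tr_det}), together with the extra standing assumptions $D>0$ and $L>0$, part (2) of that theorem applies verbatim: system \eqref{eq:model} generates Turing's instability if and only if there exists an eigenvalue $\lambda_k$ of $-\Delta$ on $\Omega$ satisfying $-\sqrt{L}<\lambda_k-\frac{D}{2D_uD_v}<\sqrt{L}$. So the entire content of the corollary is to make the abstract statement ``there exists an eigenvalue $\lambda_k$ of $-\Delta$'' concrete by plugging in the known spectrum of the Neumann/Dirichlet Laplacian on the interval $\Omega=(0,s)$.

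First I would recall (or cite the standard computation) that the eigenvalues of $-\Delta$ with homogeneous Neumann (or Dirichlet) boundary conditions on $\Omega=(0,s)$ are exactly $\{\pi^2 m^2/s^2\}$, which the excerpt has already stated just above the corollary; for Neumann one includes $m=0$ and for Dirichlet one takes $m\in\mathbb{N}_+$, but since the corollary only asserts the existence of some eigenvalue in the open interval, the distinction is immaterial and I would simply index by $m\in\mathbb{N}_+$. Then the phrase ``there exists an eigenvalue $\lambda_k$ of $-\Delta$ with $-\sqrt{L}<\lambda_k-\frac{D}{2D_uD_v}<\sqrt{L}$'' translates term-for-term into ``there exists $m\in\mathbb{N}_+$ with $-\sqrt{L}<\frac{m^2\pi^2}{s^2}-\frac{D}{2D_uD_v}<\sqrt{L}$,'' which is precisely the conclusion to be proved.

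The write-up would therefore be essentially one sentence: apply Theorem \ref{thm:main}(2) and substitute $\lambda_k=m^2\pi^2/s^2$. If one wants to say slightly more, I would note that the condition picks out exactly those mode numbers $m$ for which the quadratic $h(\lambda,L)$ of \eqref{eq:turing:center} is negative at $\lambda=m^2\pi^2/s^2$, i.e.\ for which $\det(M_{\lambda_k})<0$ and hence $L_\mu$ has a positive eigenvalue; this is just Remark \ref{rmk:polynomial} specialized to the interval. There is no real obstacle here — the only thing to be careful about is making sure the boundary condition used for the spectrum is consistent with the space $H_1$ defined in \eqref{203} (Neumann), and being explicit that including or excluding $m=0$ does not affect the ``there exists'' statement since $m=0$ would require $-\sqrt{L}<-\frac{D}{2D_uD_v}<\sqrt{L}$, a possibility already covered by the general existential clause. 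Hence the main ``work'' is purely notational bookkeeping, and I would keep the proof to two or three lines.
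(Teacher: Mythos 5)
Your proposal is correct and matches the paper's treatment exactly: the paper offers no separate argument beyond listing the spectrum $\{\pi^2 m^2/s^2 : m\in N_{+}\}$ of $-\Delta$ on $(0,s)$ and then invoking Theorem \ref{thm:main}(2), which is precisely your one-line substitution. Your additional remarks about the boundary conditions and the $m=0$ mode are reasonable bookkeeping but not needed for the existential statement.
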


Assume that there exists some $i\geq0$  and $D_{v}^{0}$ such that
\begin{eqnarray}\label{neq:turing_transition}
\lambda_{i}<k_{c}^{0}<\lambda_{i+1},
~~detM_{\lambda_{i}}>0, ~~detM_{\lambda_{i+1}}>0,
\end{eqnarray}
where $\lambda_{i}$ and $\lambda_{i+1}$ are the eigenvalues of the operator $-\Delta$,  $M_{\lambda_{i}}(k=i)$ is as (\ref{neq:turing_transition}), and
\begin{equation}\label{}
k_{c}^{0}=\frac{a_{11}D_{v}^{0}+a_{22}D_{u}}{2D_{u}D_{v}^{0}}.
\end{equation}
where $D_{v}^{0}$ is defined in \eqref{eq:ex_dv0}. It is easy to check that, $k_{c}^{0}$ is the minimal point of the polynomial $h(\lambda,L)$.

Further denote that
\begin{align}\label{}
   &D_{v}^{\lambda_{i}}=\frac{a_{22}D_{u}\lambda_{i}-det(A)}{D_{u}\lambda_{i}^{2}-a_{11}} ,\\
 &D_{v}^{\lambda_{i+1}}=\frac{a_{22}D_{u}\lambda_{i+1}-det(A)}{D_{u}\lambda_{i+1}^{2}-a_{11}}
 ,\\
 &D_{v}^{*}=\min \{D_{v}^{\lambda_{i}},D_{v}^{\lambda_{i+1}}\}.\label{eq:ex_critical}
\end{align}

Note that since $\lambda_i, i\ge 1$ are bounded below, it is easy to see that $D_{v}^{*}$ is well defined. Theorem \ref{thm:main} can then be improved as follows
\begin{thm} \label{thm:main_cont}
Suppose the conditions in Theorem \ref{thm:main} and (\ref{neq:turing_transition}) holds true, let $D_{u}$ be fixed, then the critical value of the transition for  system \eqref{eq:model} is
$D_{v}^{*}$, i.e, the system \eqref{eq:model} generates the Turing's instability if and only if $D_{v} > D_{v}^{*}$.
\end{thm}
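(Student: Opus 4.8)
The plan is to turn the assertion into a one–parameter sign analysis. By the proof of Theorem \ref{thm:main}, under \eqref{neq:tr_det} every eigenvalue $\lambda_k$ of $-\Delta$ satisfies $Tr(M_{\lambda_k})=a_{11}+a_{22}-\lambda_k(D_u+D_v)<0$, so the only way for $L_\mu$ to acquire an eigenvalue with positive real part is $\det(M_{\lambda_k})<0$, equivalently $h(\lambda_k,L)<0$ for the function $h$ from the proof of Theorem \ref{thm:main}. Hence, for a fixed value of $D_v$, system \eqref{eq:model} exhibits Turing's instability if and only if $\det(M_{\lambda_k})<0$ for some $k$, and the theorem reduces to showing that the set $\{\,D_v>0:\ \exists k,\ \det(M_{\lambda_k})<0\,\}$ is exactly the half-line $(D_v^{*},\infty)$.

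For each fixed $D_u$ and $k$ I would regard $\phi_k(D_v):=\det(M_{\lambda_k})=\lambda_k(D_u\lambda_k-a_{11})\,D_v+\big(\det(A)-D_u a_{22}\lambda_k\big)$ as an affine function of $D_v$; its unique zero is precisely the number $D_v^{\lambda_k}$ occurring in \eqref{eq:ex_critical}. Thus the sign of the slope $s_k:=\lambda_k(D_u\lambda_k-a_{11})$ decides everything: if $s_k<0$ then $\{\phi_k<0\}=(D_v^{\lambda_k},\infty)$; if $s_k>0$ then $\{\phi_k<0\}\subset(0,D_v^{\lambda_k})$; and if $s_k=0$ then $\phi_k$ keeps a constant sign. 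So the issue is to show that the destabilizing modes are $\lambda_i$ and $\lambda_{i+1}$ and that their slopes are $\le 0$.

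Next I would invoke \eqref{neq:turing_transition}. At the reference value $D_v^0$ the vertex $k_c^0=\frac{a_{11}D_v^0+a_{22}D_u}{2D_uD_v^0}$ of the upward parabola $\lambda\mapsto h(\lambda,L)$ lies strictly between the consecutive eigenvalues $\lambda_i,\lambda_{i+1}$, with $\det M_{\lambda_i}>0$ and $\det M_{\lambda_{i+1}}>0$; since the two eigenvalues closest to the vertex already lie on the positive branch, $h(\lambda_k,L)>0$ for all $k$, so \eqref{eq:model} is Turing stable at $D_v^0$. From $0<\lambda_i<k_c^0<\frac{a_{11}}{2D_u}$ (the last inequality using the inhibitor condition $a_{22}<0$) one reads off $a_{11}>0$ and $s_i<0$. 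I would then show that, since $k_c(D_v)=\frac{a_{11}}{2D_u}+\frac{a_{22}}{2D_v}$ moves monotonically toward $\frac{a_{11}}{2D_u}$ and the half-width $\sqrt{L(D_v)}$ only grows as $D_v$ increases, the first eigenvalue of $-\Delta$ to enter the unstable band $(k_-,k_+)$ is one of $\lambda_i,\lambda_{i+1}$ — namely the one realizing $D_v^{*}=\min\{D_v^{\lambda_i},D_v^{\lambda_{i+1}}\}$ — and that every far mode $\lambda_k$ with $k\le i-1$ or $k\ge i+2$ keeps $\phi_k>0$ on $[D_v^0,\infty)$.

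With the critical mode isolated, monotonicity closes the argument: if $D_v^{*}=D_v^{\lambda_i}$ (the case $D_v^{*}=D_v^{\lambda_{i+1}}$ is symmetric) then $\phi_i$ is strictly decreasing with $\phi_i(D_v^0)>0$ and $\phi_i(D_v^{*})=0$, so $\phi_i(D_v)<0$ exactly for $D_v>D_v^{*}$, while no mode forces instability for $D_v\le D_v^{*}$; hence Turing's instability holds if and only if $D_v>D_v^{*}$. I expect the genuinely delicate step to be the third one: extracting the signs of the slopes $s_k$ — in particular $s_i<0$ and $s_{i+1}\le 0$ — and excluding that some mode $\lambda_k$ with $k\notin\{i,i+1\}$ destabilizes at a value $D_v\in(D_v^0,D_v^{*}]$. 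This amounts to showing that the most unstable wavenumber stays trapped near $(\lambda_i,\lambda_{i+1})$ over the whole relevant range of $D_v$, which is exactly where the monotonicity of $k_c(D_v)$ and of $L(D_v)$, together with \eqref{neq:turing_transition}, must be exploited carefully.
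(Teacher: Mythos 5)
Your proposal follows essentially the same route as the paper's own proof: both treat $\det(M_{\lambda_k})$ as a function of $D_v$ with $D_u$ fixed, solve $\det(M_{\lambda_s})=0$ for $D_v$ at $s=i,i+1$ to obtain the thresholds $D_v^{\lambda_i},D_v^{\lambda_{i+1}}$, and take their minimum. The difference is one of completeness: the paper stops after this computation and declares the conclusion ``obvious,'' whereas you isolate the two facts that actually carry the theorem --- that $\phi_k(D_v)=\det(M_{\lambda_k})$ is affine in $D_v$ with slope $s_k=\lambda_k(D_u\lambda_k-a_{11})$, and that no mode other than $\lambda_i,\lambda_{i+1}$ can cross zero on $(D_v^0,D_v^*]$. (Incidentally, your affine computation silently corrects a typo in \eqref{eq:ex_critical}: the zero of $\phi_k$ is $\frac{a_{22}D_u\lambda_k-\det(A)}{D_u\lambda_k^2-a_{11}\lambda_k}$, whereas the paper's denominator reads $D_u\lambda_k^2-a_{11}$.) Your deferred ``delicate step'' can be closed by the nestedness of the unstable band: on $[D_v^0,\infty)$ one has $k_c>0$, so $L'=2k_ck_c'+\det(A)/(D_uD_v^2)>0$ and hence $k_+$ is increasing, while $k_-k_+=\det(A)/(D_uD_v)$ is decreasing with $k_+>0$, so $k_-$ is decreasing; thus $(k_-,k_+)$ grows monotonically from the single point $\{k_c^0\}$ lying between $\lambda_i$ and $\lambda_{i+1}$, the first eigenvalue it absorbs is one of these two, and once absorbed a mode stays absorbed. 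This is more than the paper supplies.

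Two cautions. First, your intermediate claim that every far mode keeps $\phi_k>0$ on all of $[D_v^0,\infty)$ is too strong and in fact false: as $D_v\to\infty$ the band tends to $(0,a_{11}/D_u)$ and eventually swallows every positive eigenvalue below $a_{11}/D_u$, including those below $\lambda_i$. Only the weaker statement on $(D_v^0,D_v^*]$ --- which you also state, and which the nestedness argument delivers --- is needed or true. Second, your worry about the sign of $s_{i+1}$ is well founded and cannot be resolved in general: the hypotheses give $\lambda_i<k_c^0<a_{11}/(2D_u)$ and hence $s_i\le 0$, but nothing prevents $\lambda_{i+1}>a_{11}/D_u$, in which case $s_{i+1}>0$, $D_v^{\lambda_{i+1}}<0$, mode $i+1$ never destabilizes, and the literal minimum in \eqref{eq:ex_critical} is negative; the theorem then only survives if the minimum is read over the positive (admissible) thresholds. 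Neither the paper's statement nor its proof addresses this case; your analysis is what exposes it.
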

\begin{proof}
Consider the following polynomial
\begin{eqnarray}
h(\lambda)=D_{u}D_{v}\lambda^{2}-(a_{11}D_{v}+
a_{22}D_{u})\lambda + det(A),
\end{eqnarray}
and let $\lambda=\lambda_{i}$ or $\lambda=\lambda_{i+1}$ .
By solving the following equations
\begin{align}\label{}
&h(\lambda_{k})= D_{u}D_{v}\lambda_{i}^{2}-
(a_{11}D_{v}+a_{22}D_{u})\lambda_{i}+det(A)=0,
\\
&h(\lambda_{i+1})= D_{u}D_{v}\lambda_{i+1}^{2}-
(a_{11}D_{v}+a_{22}D_{u})
\lambda_{i+1}+det(A)=0,
\end{align}
we get
\begin{align}\label{}
&D_{v}^{\lambda_{i}}=\frac{a_{22}D_{u}\lambda_{i}
-det(A)}{D_{u}\lambda_{i}^{2}-a_{11}},
\\
&D_{v}^{\lambda_{i+1}}=\frac{a_{22}D_{u}\lambda_{k+1}
-det(A)}{D_{u}\lambda_{i+1}^{2}-a_{11}}.
\end{align}
Besides,
\begin{eqnarray}
\min_{\lambda\in R }\{h(\lambda)\}=det(A)-\frac{(a_{22}D_{u}+
a_{11}D_{v})^{2}}{4D_{u}D_{v}},
\end{eqnarray}
which is bounded below in variable $D_{v}$ for fixed $D_{u}$, so we can take
\begin{equation}\label{}
D_{v}^{(i)}=\inf \{D_{v}^{\lambda_{i}},D_{v}^{\lambda_{i+1}}\}.
\end{equation}
Based on (\ref{eq:turing_det}) and (\ref{neq:turing_transition}), obviously, $D_{v}^{*}$ is the critical point.
\end{proof}

\begin{remark}\label{rmk:calculate}
Here we give the method to find the $D_{v}^{0}$ and $k_{c}^{0}$.
Let
\begin{eqnarray}
\inf_{\lambda\in R }\{h(\lambda)\}=det(A)-\frac{(a_{22}D_{u}+a_{11}D_{v}^0)^{2}}{4D_{u}D_{v}^0}=0,
\end{eqnarray}
that is,
\begin{eqnarray}
a_{11}^{2}(D_{v}^0)^{2}-(4D_{u}det(A)-2a_{11}a_{22}D_{u})D_{v}^0+a_{22}^{2}D_{u}^{2}=0,
\end{eqnarray}
Then we choose the larger positive root, and get
\begin{align}
&k_{c}^{0}=\frac{a_{11}D_{v}^{0}+a_{22}D_{u}}{2D_{u}D_{v}^{0}},\\
&D_{v}^{0}=\frac{q+\sqrt{q^{2}-4a_{11}^{2}a_{22}^{2}D_{u}^{2}}} 
{2a_{11}^{2}}, \label{eq:ex_dv0}
\end{align}
where
\begin{equation*}
q=4D_{u}det(A)-2a_{11}a_{22}D_{u}.
\end{equation*}
\end{remark}

\begin{remark}
Without loss of generality, let $D_{*}=D_{v}^{(1)}$, then the $PES$ condition is shown as follows:

If~~$D_{v}^{\lambda_{i}}\neq D_{v}^{\lambda_{i+1}}$, then
\begin{align}\label{}
 &\beta_{i}^{(2)} \left\{
   \begin{array}{ll}
   <0, ~if ~D_{v}<D_{*},
   \\=0, ~if~ D_{v}=D_{*},
   \\>0, ~if~ D_{v}>D_{*}.
\end{array}
\right.\\
&\beta_{k}^{(j)}(D_{*})<0,~ for ~all~ (k,j)\neq(i,2).
\end{align}

If~~$D_{v}^{\lambda_{i}}= D_{v}^{\lambda_{i+1}}$, then
\begin{align}\label{}
 &\beta_{i}^{(2)}=\beta_{i+1}^{(2)} \left\{
   \begin{array}{ll}
   <0, ~if ~D_{v}<D_{*},
   \\=0, ~if~ D_{v}=D_{*},
   \\>0, ~if~ D_{v}>D_{*}.
\end{array}
\right.\\
&\beta_{k}^{(j)}(D_{*})<0,~ for ~all~ (k,j)\neq(i,2)~and~(i+1,2).
\end{align}
\end{remark}


\subsection{Geometric insights}\label{sec:gemetric}
In this section, we give the geometrical explanation for Turing's instability of system \eqref{eq:model}, which can help us understand the process of Turing's losing stability.
Let
\begin{align}\label{}
&k_{c}=\frac{D}{2D_{u}D_{v}},\\
\label{235}
&h(\lambda,L)=D_{u}D_{v}\lambda^{2}-D\lambda+det(A)\nonumber\\
&=D_{u}D_{v}[(\lambda-k_{c})^{2}-L ],\\
&L_{1}=\min\{L_{\lambda_{i}},L_{\lambda_{i+1}}\},
\end{align}
where
\begin{align}\label{}
  &L_{\lambda_{s}}=(k_{c}-\lambda_{i})^{2}, s=i, i+1,\\
  &\lambda_{i}<k_{c}<\lambda_{i+1},
~~detM_{\lambda_{i}}>0,~~ detM_{\lambda_{i+1}}>0,
\end{align}
 $L$ and $D$ are as in (\ref{eq:ex_transition_L}) and (\ref{eq:ex_turing_D}).
If $k_{c}=\frac{D}{2D_{u}D_{v}}$ is a constant, $\lambda_{i}<k_{c}<\lambda_{i+1}$,
based on (\ref{eq:turing_det}) and (\ref{neq:turing_transition}), obviously,
$L_{1}$ is the critical parameter reflecting the Turing's instability and transition.

\textbf{CASE 1}:$L_{\lambda_{i}} \neq L_{\lambda_{i+1}}$.
\begin{figure}[!htbp]
\begin{center}
\includegraphics[width = .6\textwidth]{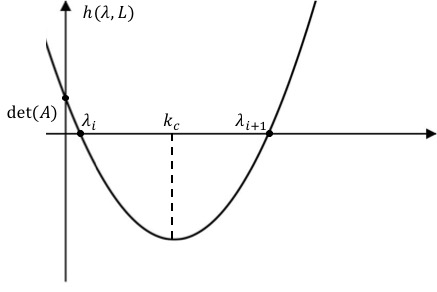}
\caption{A representative graph for Case 1. In this case, $(\lambda_{i}-k_c)^2 = (\lambda_{i+1}-k_c)^2$.}
\label{fig:case1}
\end{center}
\end{figure}

Without loss of generality, taking $L_{1}=L_{\lambda_{k}}$,
which means that $det(M_{\lambda_{k}})<0 $ for $L>L_{1}$, in particular,
\begin{align}\label{241}
 &\beta_{i}^{(2)} \left\{
   \begin{array}{ll}
   <0, ~if ~L<L_{1},
   \\=0, ~if ~L=L_{1},
   \\>0, ~if ~L>L_{1}.
\end{array}
\right.\\
&\beta_{k}^{(j)}(L_{1})<0, ~for ~all (k,j)\neq(i,2),
\end{align}
The transition of \textbf{case 1} is shown in Fig. \ref{fig:case1}.

The curves crossing the fixed point $(0,detA)$ in figure \ref{fig:case1} is determined by (\ref{235}). Based on Theorem \ref{thm:main}, system \eqref{eq:model}
generates Turing's instability if and only if there is a spectral point of $-\Delta$ falling into between the two intersection point of $h(\lambda,L)$ and k-axis. Obviously, Fig. \ref{fig:case1} shows that the spectral point $\lambda_{i}$ of $-\Delta$ is exactly a intersection point of $h(\lambda,L)$ and k-axis.

\textbf{CASE 2}:$L_{\lambda_{i}} = L_{\lambda_{i+1}}$.
\begin{figure}[!htbp]
\begin{center}
\includegraphics[width = .6\textwidth]{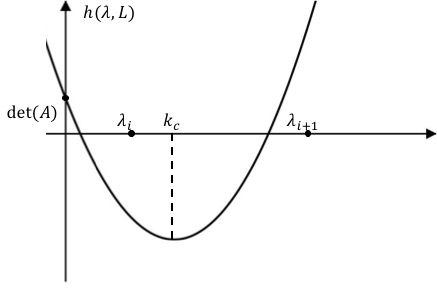}
\caption{A representative graph for Case 2. In this case, $(\lambda_{i}-k_c)^2 \neq (\lambda_{i+1}-k_c)^2$.}
\label{fig:case2}
\end{center}
\end{figure}
In the same way, we can get
\begin{align}\label{243}
 &\beta_{j}^{(2)} \left\{
   \begin{array}{ll}
   <0, if L<L_{1},
   \\=0, if L=L_{1},
   \\>0, if L>L_{1},
\end{array}
\right.
~for~ j=i ~and ~i+1,\\
&\beta_{k}^{(h)}(L_{1})<0, for~ all ~(k,h)\neq(j,2),
\end{align}
The transition of this case is shown in Fig. \ref{fig:case2},
and the $\lambda_{i_{2}}=\lambda_{k}=\lambda_{i_{1}+1}$ are
exactly the two intersection points of $h(\lambda,L)$ and $\lambda$-axis.

In fact, the curve determined by $h(\lambda,L)$ move down as the increasing of the parameter $L(D_{u},D_{v})$, and $L_{1}$ is the critical value of the $L(D_{u},D_{v})$ determined by $h(\lambda,L)$ touching the eigenvalue $\lambda_{i}$ or $ \lambda_{i+1}$.

The two cases mean that Turing's instability and phase transition of system ( \ref{eq:model} ) has only two types, the single real eigenvalue transition for the case that $L_{\lambda_{i}} \neq L_{\lambda_{i+1}}$, and the double real eigenvalue transition for the case that $L_{\lambda_{i}}=L_{\lambda_{i+1}}$.


\subsection{Center manifold reduction of Turing's system}
In addition to Section \ref{sec:cmr}, \textit{center manifold reduction} is also a basic tool to calculate the bifurcated solution in dynamic transition theory, which was established in \cite{Ni1998}. Here we show how this tool can help to derive the bifurcated solution.

  Let $L_{1}=L_{\lambda_{i}}$, the $\xi$ and $\xi_{*}$ be the eigenvectors of matrix $M_{\lambda_{i}}$ and $M_{\lambda_{i}}^{*}$ respectively, and the center manifold function is shown as the follows:
\begin{eqnarray}
\Phi=\Phi(x):\text{span}{e_{\lambda_{i}}}\rightarrow (\text{span}{e_{\lambda_{i}}})^{\perp}
\end{eqnarray}

 For CASE 1, the center manifold reduction system is shown as follows
\begin{eqnarray}\label{eq:cmr_case1}
\frac{dx}{dt}=\beta_{i}^{(2)}x+\frac{<G(x\xi e_{\lambda_{i}}+\Phi(x)),\xi^{*} e_{\lambda_{i}}>_{H}}{<\xi e_{\lambda_{i}},\xi^{*} e_{\lambda_{i}}>_{H}},
\end{eqnarray}
where $\beta_{i}^{(2)}$ is as in (\ref{241}), $e_{\lambda_{i}}$ is a eigenvector of $-\Delta$ corresponding to $\lambda_{i}$, $\xi$ and $\xi^{*}$ are the eigenvectors of the matrix $M_{\lambda_{i}}$ and $ M_{\lambda_{i}}^{*}$ respectively.

For CASE 2, the center manifold reduction system is shown as follows
\begin{align}\label{}
&\frac{dx}{dt}=\beta_{i}^{(2)}x+\frac{<G(x\xi e_{\lambda_{i}}+y \eta e_{\lambda_{i+1}}+\Phi(x,y)),\xi^{*} e_{\lambda_{i}}>_{H}}{<\xi e_{\lambda_{i}},\xi^{*}e_{\lambda_{i}}>_{H}},
 \\
&\frac{dy}{dt}=\beta_{i+1}^{(2)}y+\frac{<G(x\xi e_{\lambda_{i}}+y\eta e_{\lambda_{i+1}}+\Phi(x,y)), \eta^{*} e_{\lambda_{i+1}}>_{H}}{<\eta e_{\lambda_{i+1}},\eta^{*}e_{\lambda_{i+1}}>_{H}}, \label{eq:cmr_case2}
\end{align}
where $\beta_{i}^{(2)}$ and $\beta_{i+1}^{(2)}$ are as (\ref{243}), $e_{\lambda_{i}}$ and $e_{\lambda_{i+1}}$ are the eigenvectors of $-\Delta$ corresponding to $\lambda_{i}$ and $\lambda_{i+1}$ respectively, $\xi$ and $\xi^{*}$ are the eigenvectors of the matrix $M_{\lambda_{i}}$ and
$ M_{\lambda_{i}}^{*}$ respectively, $\eta$ and $\eta^{*}$ are the eigenvectors (a $2\times1$ vector of functions) of the matrix $M_{\lambda_{i+1}}$ and $M_{\lambda_{i+1}}^{*}$ respectively, and $\Phi(x,y):span\{e_{\lambda_{i}},e_{\lambda_{i+1}}\}\rightarrow (span\{e_{\lambda_{i}},e_{\lambda_{i+1}}\})^{\perp}$
is the center manifold function.

Therefore, we basically reduced an infinite dimensional system \eqref{eq:model} to an one or two-dimensional dynamical system, depending on the $i-$th eigenspace of \eqref{eq:model}. \eqref{eq:cmr_case1}-\eqref{eq:cmr_case2} total determines all the bifurcated solutions of \eqref{eq:model} by expressing the bifurcated solutions locally using a center manifold function defined in Theorem \ref{thm:cmr_infinite}.
\newline


\section{Application to the Schnakenberg system}\label{sec:5}
The Schnakenberg system is a well-studied reaction-diffusion systems. It is a classical example of non-equilibrium thermodynamics resulting in the establishment of a nonlinear chemical oscillator \cite{Schnakenberg1979}. It has also been used to model the spatial distribution of a morphogen, e.g., the distribution of calcium in the tips and whorl in Acetabularia \cite{Goodwin1985}. As reviewed at the beginning of this paper, morphogen-based mechanisms have been widely proposed for tissue patterning, but only recently have there been sufficient experimental data and adequate modeling for us to begin to understand how various morphogens interact with cells and emergent patterns \cite{Gurdon2001,Arafa2012}.

Denote $X$, $A$, $B$ and $Y$ to be four different chemicals, Schnakenberg considered the following chemical reaction
\bea\label{eq:sch_reaction}
X \xrightleftharpoons[k_1]{k_{-1}} A,\quad B \overset{k_2}\rightarrow Y,\quad 2X + Y \overset{k_3}\rightarrow X
\eea

\subsection{Mathematical form of the Schnakenberg system }
If concentrations $A$ and $B$ are approximately constants (e.g. $A$ and $B$ are abundant in the system), after proper nondimensionalization and impose Dirichlet/Neumann boundary condition on a bounded domain $\Omega\subset \mathbb{R}^n$, then the mathematical form of the Schnakenberg model is given by
 \begin{eqnarray}\label{eq:sch}
\left\{
   \begin{array}{ll}
   \frac{\partial u}{\partial t}=\Delta u +r(a- u+u^{2}v),
   \\\frac{\partial v}{\partial t}=d \Delta v +r(b- u^{2}v) .
\end{array}
\right.
\end{eqnarray}
 where $u$ and $ v $ are all the concentrations of the chemicals, $u$ is activator, and $v$ is inhibitor.
Obviously, the steady state of \eqref{eq:sch} is as follows
\begin{equation}\label{eq:sch_sss}
 (u_{0},v_{0})=(a+b,\frac{b}{(a+b)^{2}}).
\end{equation}

Make the following substitution
\begin{equation*}
  u=a+b+u_{1}, ~v=\frac{b}{(a+b)^{2}}+v_{1},
\end{equation*}
then equation \eqref{eq:sch} can be rewritten as
 \begin{eqnarray}\label{eq:model_sch}
\left\{
   \begin{array}{ll}
   \frac{\partial u_{1}}{\partial t}=\Delta u_{1} +r( \frac{b-a}{a+b}u_{1}+(a+b)^{2}v_{1}+
   2(a+b)u_{1}v_{1}-\frac{b}{(a+b)^{2}}u_{1}^{2}+u_{1}^{2}v_{1}),
   \\\frac{\partial v_{1}}{\partial t}=d\Delta v_{1} -r( \frac{2b}{a+b}u_{1}+(a+b)^{2}v_{1}+
   2(a+b)u_{1}v_{1}-\frac{b}{(a+b)^{2}}u_{1}^{2}+u_{1}^{2}v_{1}) .
\end{array}
\right.
\end{eqnarray}

By (\ref{205})-(\ref{eq:cmr_cond}), equation (\ref{eq:model_sch}) is equivalents to following operator equation
\begin{eqnarray}\label{eq:sch_abstract}
\left\{
   \begin{array}{ll}
   \frac{\partial w}{\partial t}=L_{\lambda}w + G(w,\lambda),
   \\v(0)=\varphi,
\end{array}
\right.
\end{eqnarray}
where
\begin{align}\label{305}
&L_{\lambda}=A_{\lambda}+B_{\lambda},G(w,\lambda):H_{1}\rightarrow H,\\
&w=(u_{1},v_{1})^{T},
\\ &B_{\lambda}u=(\Delta u_{1},d\Delta v_{1}),\\
 &A_{\lambda}v=
\left(
\begin{array}{cc}
  r\frac{b-a}{a+b}& r(a+b)^{2}\\
  -r\frac{2b}{a+b}& -r(a+b)^{2}
\end{array}
\right)w,
\\& G(w,\lambda)=(g_{1}(w,r),g_{2}(w,r))
\\&g_{1}(w,r)=r(2(a+b)u_{1}v_{1}
-\frac{b}{(a+b)^{2}}u_{1}^{2}+u_{1}^{2}v_{1}),
\\&g_{2}(w,r)=-r(2(a+b)u_{1}v_{1}-
\frac{b}{(a+b)^{2}}u_{1}^{2}+u_{1}^{2}v_{1}),
\\&\lambda=(1,\alpha,r)\in R_{3}^{+}=\{ (x_{1},x_{2},x_{3})\in R^{3}:x_{i}\geq 0,i=1,2,3 \}\nonumber.
\end{align}
and also
\begin{align}\label{}
 &a_{11}= r\frac{b-a}{a+b},a_{12}= r(a+b)^{2}, \\
 &a_{21}= -r\frac{2b}{a+b},a_{22}= -r(a+b)^{2}.
\end{align}

\subsection{A necessary and sufficient condition for Turing's instability }
For system (\ref{eq:sch}), based on (\ref{eq:ex_transition_L}) and (\ref{eq:ex_turing_D}), we have
\begin{align}
& D=rd\frac{b-a}{a+b}-r(a+b)^{2},\\
 &det(A)=r^{2}(a+b)^{2},\\
 &L=-\frac{r^{2}(a+b)^{2}}{d} +\frac{D^{2}}{4d^{2}},
\end{align}
Let
\begin{equation}\label{}
 \beta_{\pm}=\frac{D\pm\sqrt{D^{2}-4d ~det(A)}}{2d},
\end{equation}
based on Theorem (\ref{thm:main}) , we get the following conclusion
\begin{thm}\label{thm:sch_1}
Assume $\frac{b-a}{a+b}-(a+b)^{2}<0$\\
(1) if
$-\frac{\sqrt{4d}}{r(a+b)}<d\frac{b-a}{a+b}-(a+b)^{2}<0$, i.e.
\bea
\max{(d,1)}(b-a) < (a+b)^3 < (d+\frac{\sqrt{4d}}{r})(b-a)
\eea
hen system (\ref{eq:sch}) is Turing stable.\\
(2) if
$d\frac{b-a}{a+b}-(a+b)^{2}>\frac{\sqrt{4d}}{r(a+b)}$ or $d\frac{b-a}{a+b}-(a+b)^{2}<-\frac{\sqrt{4d}}{r(a+b)}$, i.e.
\bea
(b-a) < (a+b)^3 < (d-\frac{\sqrt{4d}}{r})(b-a) 
\eea
\bea
\text{or~}\max{(1,d+\frac{\sqrt{4d}}{r})}(b-a) < (a+b)^3 
\eea
then system (\ref{eq:sch}) generates Tuing's instability if and only if there exists an eigenvalue $\lambda_{k_{0}}$ of $-\Delta$ such that $\beta_{-}<\lambda_{k_{0}}<\beta_{+}$.
\end{thm}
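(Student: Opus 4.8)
The plan is to apply Theorem \ref{thm:main} directly, having already computed the three structural quantities $D$, $\det(A)$, and $L$ for the Schnakenberg kinetics in the paragraph immediately preceding the statement. So the proof reduces to two tasks: (i) verify the standing hypothesis $\det(A) > 0$, $\mathrm{Tr}(A) < 0$ of Theorem \ref{thm:main}, and (ii) translate the dichotomy ``$D \le 0$ and $L \le 0$'' versus ``$L > 0$'' into explicit inequalities on the parameters $a$, $b$, $d$, $r$, and then rewrite those inequalities in the cubic form $\max(d,1)(b-a) < (a+b)^3 < \cdots$ displayed in the theorem.

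First I would check the hypothesis. We have $\det(A) = r^2(a+b)^2 > 0$ automatically (for $r \neq 0$, $a + b \neq 0$), and $\mathrm{Tr}(A) = a_{11} + a_{22} = r\bigl(\tfrac{b-a}{a+b} - (a+b)^2\bigr)$, which is negative precisely under the stated assumption $\tfrac{b-a}{a+b} - (a+b)^2 < 0$. So the standing assumption \eqref{neq:tr_det} is exactly the displayed hypothesis of Theorem \ref{thm:sch_1}, and the constant-kinetics steady state $(u_0,v_0)$ is stable without diffusion.

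Next I would unwind the parameter $L$. From $L = -\tfrac{r^2(a+b)^2}{d} + \tfrac{D^2}{4d^2}$ one sees $L \le 0 \iff D^2 \le 4d\,r^2(a+b)^2 \iff |D| \le 2\sqrt{d}\,r(a+b)$ (all positive), i.e. $-\tfrac{2\sqrt d}{r(a+b)}\cdot r(a+b)\le D \le \cdots$; dividing $D = r\bigl(d\tfrac{b-a}{a+b} - (a+b)^2\bigr)$ through by $r(a+b)$ gives the condition $-\tfrac{\sqrt{4d}}{r(a+b)} < d\tfrac{b-a}{a+b} - (a+b)^2 < \tfrac{\sqrt{4d}}{r(a+b)}$. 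Part (1): since we are already under $\tfrac{b-a}{a+b} - (a+b)^2 < 0$ one checks $d\tfrac{b-a}{a+b} - (a+b)^2 < 0$ as well (when $d \le 1$ this is immediate; when $d>1$ one must be slightly careful, but the hypothesis in (1) supplies the needed bound directly), so $D < 0$, and the extra hypothesis of (1) is exactly $L \le 0$; Theorem \ref{thm:main}(1) then gives Turing stability. Part (2) is the complement: the two displayed alternatives are precisely ``$d\tfrac{b-a}{a+b} - (a+b)^2$ lies outside $(-\tfrac{\sqrt{4d}}{r(a+b)},\tfrac{\sqrt{4d}}{r(a+b)})$'', which is $L > 0$, so Theorem \ref{thm:main}(2) applies and Turing instability occurs iff some eigenvalue $\lambda_{k_0}$ of $-\Delta$ satisfies $-\sqrt L < \lambda_{k_0} - \tfrac{D}{2D_uD_v} < \sqrt L$; with $D_u = 1$, $D_v = d$ this band is exactly $(\beta_-,\beta_+)$ since $\beta_\pm = \tfrac{D}{2d} \pm \sqrt{\tfrac{D^2}{4d^2} - \tfrac{r^2(a+b)^2}{d}} = \tfrac{D}{2d}\pm\sqrt L$.

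The only genuinely fiddly part — and the place I would be most careful — is the conversion of the inequalities on $d\tfrac{b-a}{a+b} - (a+b)^2$ into the cubic-in-$(a+b)$ form with the $\max$'s. Multiplying by $(a+b) > 0$ turns $d\tfrac{b-a}{a+b} - (a+b)^2 \lessgtr \pm\tfrac{\sqrt{4d}}{r(a+b)}$ into $d(b-a) - (a+b)^3 \lessgtr \pm\tfrac{\sqrt{4d}}{r}$, i.e. $(a+b)^3 \gtrless (d \mp \tfrac{\sqrt{4d}}{r})(b-a)$; one then has to intersect with the standing hypothesis $(a+b)^3 > (b-a)$ (from $\tfrac{b-a}{a+b} < (a+b)^2$, assuming $b > a$ so both sides are positive — the case $b \le a$ making the standing hypothesis automatic should be noted) and with the sign constraints, which is where the $\max(d,1)$ and $\max(1, d+\tfrac{\sqrt{4d}}{r})$ come from. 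This is purely bookkeeping on signs and does not require any new idea; I would present it as a short case split on the sign of $b - a$ and on whether $d \gtrless 1$, and then simply quote Theorem \ref{thm:main}.
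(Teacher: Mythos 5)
Your overall strategy coincides with the paper's: the paper gives no written proof of this theorem at all, merely computing $D$, $\det(A)$, $L$ and $\beta_{\pm}$ for the Schnakenberg kinetics and asserting the conclusion ``based on Theorem \ref{thm:main}''. Your verification of the standing hypotheses ($\det(A)=r^{2}(a+b)^{2}>0$ automatically, and $\mathrm{Tr}(A)=r\bigl(\tfrac{b-a}{a+b}-(a+b)^{2}\bigr)<0$ exactly under the displayed assumption), and your identification $\beta_{\pm}=\tfrac{D}{2d}\pm\sqrt{L}$, so that the band $-\sqrt{L}<\lambda_{k}-\tfrac{D}{2D_{u}D_{v}}<\sqrt{L}$ of Theorem \ref{thm:main}(2) is literally $(\beta_{-},\beta_{+})$, are correct and are all that the paper implicitly relies on for part (2).

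There is, however, a genuine algebra failure in the one step you yourself flag as fiddly, and it does not come out the way you claim. From $L\le 0\iff D^{2}\le 4d\,r^{2}(a+b)^{2}\iff |D|\le 2\sqrt{d}\,r(a+b)$, the correct move is to divide $D=r\bigl(d\tfrac{b-a}{a+b}-(a+b)^{2}\bigr)$ by $r$, not by $r(a+b)$ (dividing by $r(a+b)$ would leave $\tfrac{d(b-a)}{(a+b)^{2}}-(a+b)$ on the left, not the expression in the theorem). This yields
\[
\Bigl|\,d\tfrac{b-a}{a+b}-(a+b)^{2}\Bigr|\le \sqrt{4d}\,(a+b),
\]
whereas your write-up (and the theorem statement) carries the bound $\tfrac{\sqrt{4d}}{r(a+b)}$; the two differ by a factor of $r(a+b)^{2}$. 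Likewise, multiplying the correct inequality through by $(a+b)$ gives $(a+b)^{3}<d(b-a)+\sqrt{4d}\,(a+b)^{2}$, which is not of the form $(a+b)^{3}<(d+\tfrac{\sqrt{4d}}{r})(b-a)$ displayed in the statement. So the chain of equivalences you assert cannot actually be reached by the manipulation you describe: you have reverse-engineered the displayed bound rather than derived it, and in doing so you inherit (and paper over) what appears to be an error in the theorem statement itself. To make the proof sound you must either carry the correct threshold $\sqrt{4d}\,(a+b)$ (equivalently $|d(b-a)-(a+b)^{3}|\le \sqrt{4d}\,(a+b)^{2}$) through both parts, correcting the displayed inequalities accordingly, or exhibit a normalization forcing $r(a+b)^{2}=1$; your proposal does neither. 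The remainder of the argument (the if-and-only-if in part (2) and the role of $\beta_{\pm}$) is fine.
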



\subsection{The critical parameter of the Schnakenberg system}
In fact, $d$ is an adjustable parameter for Schnakenberg system. That means that the critical parameter is determined by $d$ . In the following, we will give the critical parameter $d_{0}$.

Due to the method introduced in Section \ref{sec:cmr}, by directed calculation we get
\begin{align}\label{}
 & h(\lambda)=d\lambda^{2}-r(\frac{b-a}{b+a}d-
 (a+b)^{2})\lambda+r^{2}(a+b)^{2},\\
  &k_{c}=\frac{r(b-a)d-r(a+b)^{3}}{2d(a+b)}.
\end{align}
Let
\begin{equation}\label{}
D_{v}^{0}=\frac{(4b^{2}+4ab)(a+b)^{2}+
(a+b)^{2}\sqrt{(4b^{2}+4ab)^{2}+a^{2}-b^{2}}}{(b-a)^{2}},
\end{equation}
\begin{align}\label{}
&k_{c}^{0}=r\frac{(b-a)D_{v}^{0}-(b+a)^{3}}{2D_{v}^{0}(b+a)},\\
&d_{0}^{\lambda_{k}}=\frac{-r^{2}(a+b)^{3}-r(a+b)^{3}\lambda_{k}}
{(a+b)\lambda_{k}^{2}-r(b-a)},\\
&d_{0}^{\lambda_{k+1}}=\frac{-r^{2}(a+b)^{3}
-r(a+b)^{3}\lambda_{k+1}}{(a+b)\lambda_{k+1}^{2}-r(b-a)},\\
&\label{323}d_{0}=\min\{ d_{0}^{\lambda_{k}} , d_{0}^{\lambda_{k+1}}\},
\end{align}
where $\lambda_{k}$ and $\lambda_{k+1}$ are the eigenvalues of the operator $-\Delta$ such that
\begin{eqnarray}\label{eq:sch_ass}
\lambda_{k}<k_{c}^{0}<\lambda_{k+1},~ h(\lambda_{k})>0, ~h(\lambda_{k+1})>0,
\end{eqnarray}

Based on Theorem (\ref{thm:main_cont}) in section 2, then we get the following corollary.
\begin{corollary} Let $d_{0}$ be as in (\ref{323}), then $d_{0}$ is the critical parameter reflecting Turing's instability and phase transition, i.e, if $d>d_{0}$, then Turing's instability appears and dynamic transition occurs.
\end{corollary}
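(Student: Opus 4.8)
The plan is to obtain the Corollary as a direct specialization of Theorem~\ref{thm:main_cont} to the Schnakenberg system \eqref{eq:sch}, in which the diffusion coefficients are $D_{u}=1$ and $D_{v}=d$, the parameter to be varied is $d$, and the Jacobian at the homogeneous steady state \eqref{eq:sch_sss} has entries $a_{11}=r\frac{b-a}{a+b}$, $a_{12}=r(a+b)^{2}$, $a_{21}=-r\frac{2b}{a+b}$, $a_{22}=-r(a+b)^{2}$. First I would check the standing hypotheses \eqref{neq:tr_det} of Theorem~\ref{thm:main}: a one-line computation gives $det(A)=a_{11}a_{22}-a_{12}a_{21}=-r^{2}(b-a)(a+b)+2r^{2}b(a+b)=r^{2}(a+b)^{2}>0$, while $Tr(A)=a_{11}+a_{22}=r\big(\frac{b-a}{a+b}-(a+b)^{2}\big)$, which is negative exactly under the blanket assumption $\frac{b-a}{a+b}-(a+b)^{2}<0$ already imposed in Theorem~\ref{thm:sch_1}. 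Hence $(u_{0},v_{0})$ is a stable equilibrium of the kinetics and the framework of Section~\ref{sec:turing} applies.

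The next step is the algebraic bookkeeping, which is routine once the entries of $A$ are inserted. Substituting them into \eqref{eq:turing_det} yields $h(\lambda)=d\lambda^{2}-r\big(\frac{b-a}{b+a}d-(a+b)^{2}\big)\lambda+r^{2}(a+b)^{2}$ with vertex $k_{c}=\frac{r(b-a)d-r(a+b)^{3}}{2d(a+b)}$; solving $\min_{\lambda}h(\lambda)=0$ for $d$ as prescribed in Remark~\ref{rmk:calculate} and keeping the larger positive root produces precisely the stated $D_{v}^{0}$ and the corresponding $k_{c}^{0}$. Likewise, solving $h(\lambda_{k})=0$ and $h(\lambda_{k+1})=0$ for $d$ gives the two candidate critical values $d_{0}^{\lambda_{k}}$ and $d_{0}^{\lambda_{k+1}}$, and $d_{0}=\min\{d_{0}^{\lambda_{k}},d_{0}^{\lambda_{k+1}}\}$ reproduces \eqref{323}. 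All of this is just substitution into the formulas of Theorem~\ref{thm:main_cont}.

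The substantive step, and the one I expect to be the main obstacle, is to verify the bracketing hypothesis \eqref{eq:sch_ass} of Theorem~\ref{thm:main_cont}: one must exhibit consecutive eigenvalues $\lambda_{k}<k_{c}^{0}<\lambda_{k+1}$ of $-\Delta$ on $\Omega$ (with the imposed boundary condition) such that $det(M_{\lambda_{k}})=h(\lambda_{k})>0$ and $det(M_{\lambda_{k+1}})=h(\lambda_{k+1})>0$ at $d=D_{v}^{0}$. Since the spectrum of $-\Delta$ is discrete and unbounded, $k_{c}^{0}$ generically lies strictly between two consecutive eigenvalues, and because at $d=D_{v}^{0}$ the parabola $h$ is tangent to the $\lambda$-axis from above one automatically has $h\ge 0$ away from the vertex, so nonnegativity at $\lambda_{k},\lambda_{k+1}$ is built in; the care needed is to exclude the degenerate coincidences $\lambda_{k}=k_{c}^{0}$ or $\lambda_{k+1}=k_{c}^{0}$ (precisely the boundary between the single- and double-eigenvalue transitions of Section~\ref{sec:gemetric}) and to confirm $(a+b)\lambda_{k}^{2}-r(b-a)\neq 0$, so that the formula for $d_{0}^{\lambda_{k}}$ is well defined and, together with $a_{22}<0$, yields $d_{0}>0$.

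Granting \eqref{eq:sch_ass}, Theorem~\ref{thm:main_cont} applies verbatim with $D_{v}^{*}=d_{0}$ and identifies $d_{0}$ as the transition value. For $d>d_{0}$ the parabola $h$ has dropped low enough that some eigenvalue $\lambda$ of $-\Delta$ satisfies $h(\lambda)=det(M_{\lambda})<0$, so by Theorem~\ref{thm:main} the system \eqref{eq:sch} is Turing unstable; moreover $Tr(M_{\lambda})=Tr(A)-\lambda(1+d)\le Tr(A)<0$ for every $\lambda\ge 0$, so the crossing eigenvalue is real, and the sign pattern of the crossing is the one recorded in the PES Remark following Theorem~\ref{thm:main_cont}. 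Invoking the classification theorem of Section~\ref{sec:3} then shows that a genuine dynamic transition occurs at $d=d_{0}$, which is the assertion of the Corollary.
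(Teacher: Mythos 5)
Your proposal is correct and follows essentially the same route as the paper, which states the corollary as a direct specialization of Theorem~\ref{thm:main_cont} to the Schnakenberg system after substituting $D_u=1$, $D_v=d$ and the Jacobian entries to obtain $h(\lambda)$, $k_c^0$, and $d_0=\min\{d_0^{\lambda_k},d_0^{\lambda_{k+1}}\}$. The only difference is that you explicitly verify the hypotheses \eqref{neq:tr_det} and discuss the bracketing condition \eqref{eq:sch_ass}, which the paper simply imposes as a standing assumption; this is a welcome tightening rather than a different argument.
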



\subsection{Phase Transition of the Schnakenberg system  }
\subsubsection{Single real eigenvalue transition of the Schnakenberg system}
~~~~Based on the dynamic bifurcation theory in \cite{Ni1998}, without loss of generality, assume that $\lambda_{i_{1}}=\lambda_{1}$, then the center manifold reduction equation for system (\ref{eq:sch}) is given by
    \begin{equation}\label{322}
    \frac{dx}{dt}=\beta_{\lambda_{1}}^{(2)}x+\frac{1}{\langle \xi e_{\lambda_{1}},\xi ^{*}e_{\lambda_{1}}\rangle}_{H}
    \langle G(x\xi e_{\lambda_{1}}+\Phi(x)),\xi ^{*}e_{\lambda_{1}}\rangle_{H}
    \end{equation}
where
    \begin{equation}\label{}
    M_{\lambda_{1}}\xi=0, M_{\lambda_{1}}^{*}\xi^{*}=0,
    \end{equation}
\begin{equation}\label{}
M_{\lambda_{1}}=\left(
\begin{array}{cc}
  r\frac{b-a}{a+b}-\lambda_{1}^{2}& r(a+b)^{2}\\
  -r\frac{2b}{a+b}& -r(a+b)^{2}-\lambda_{1}^{2}
\end{array}
\right),M_{\lambda_{1}}^{*}=M_{\lambda_{1}}^{T}.
\end{equation}
\begin{equation}\label{}
  \xi =(\xi_{1} ,\xi_{2} )=(a+b,\frac{(a+b)\lambda_{1}^{2}-(b-a)r}{r(a+b)^{2}}),
\end{equation}
\begin{equation}\label{}
  \xi^{*} =(\xi_{1}^{*} ,\xi_{2}^{*} )=(\frac{2br(a+b)}{(b-a)r-(a+b)\lambda_{1}^{2}},a+b).
\end{equation}
Note
\begin{equation*}
  h=\bigg(\frac{2br(a+b)^{2}}{(b-a)r-(a+b)\lambda_{1}^{2}}
  +\frac{(a+b)\lambda_{1}^{2}-(b-a)r}{r(a+b)}\bigg)\int_{\Omega} e_{\lambda_{1}}^{2}dx,
\end{equation*}
thus we get the second order term and the third order term as follows.
\begin{align}\label{}
&G_{2}=r(-Hu^{2}+Muv,Hu^{2}-Muv),\\
&G_{3}=r(u^{2}v,-u^{2}v),\\
&H=\frac{b}{(a+b)^{2}},\\
&M=2(a+b).
\end{align}

We can also get
\bea
  &\frac{1}{h}\langle G_{2}(x\xi_{1}e_{\lambda_{1}},x\xi_{2}e_{\lambda_{1}}),\xi^{*}e_{\lambda_{1}}\rangle_{H} \nonumber\\&=\frac{r}{h}\bigg( (\xi_{2}^{*}-\xi_{1}^{*})H \xi_{1}^{2}\int_{\Omega} e_{\lambda_{1}}^{3}dx+(\xi_{2}^{*}-\xi_{1}^{*})\xi_{1}
  \xi_{2}M\int_{\Omega} e_{\lambda_{1}}^{3}dx\bigg)x^{2}
   \nonumber\\&=Px^{2}.
\eea


Hence, the center manifold reduction equation (\ref{322}) can be written as
\begin{equation}\label{}
\frac{dx}{dt}=\beta_{\lambda_{1}}^{(2)}x+Px^{2}.
\end{equation}

Suppose
\begin{equation}\label{333}
\int_{\Omega} e_{\lambda_{1}}^{3}dx\neq0.
\end{equation}

We have the following Theorem.
\begin{thm}\label{thm:sch:single}
Let $\lambda_{1}^{2}\neq r$ and $ \int e_{\lambda_{1}}^{3}dx\neq 0$, then the system (\ref{eq:sch}) has a transition at $d=d_{0}$, which is mixed transition. In particular, the system bifurcates on each side of $d=d_{0}$ to a unique branch $w^{d}$ of steady state solutions, such that the following assertions hold true:\\
(1) When $d<d_{0}$, the bifurcated solution $w^{d}$ is a saddle, and the stable manifold $\Gamma_{d}^{1}$ of $w^{d}$ separates the space H into two disjoint open sets $ U^{d}$ and$ V^{d}$, such that $ v = 0 \in V^{d}$ is an attractor, and the orbits of (3.24 ) in  $ u^{d}$are far from
$v = 0$.\\
(2) When $d>d_{0}$, the stable manifold $\Gamma_{d}^{0}$ of $v = 0 $ separates the neighborhood O
of $u = 0$ into two disjoint open sets $O_{1}^{d}$ and $O_{2}^{d}$, such that the transition is jump in $O_{1}^{0}$, and is continuous in $O_{2}^{d}$. The bifurcated solution $w^{d}\in O_{2}^{d}$is an attractor such that for any $\phi\in O_{2}^{d}$,we have
\begin{equation}\label{}
\lim_{t\rightarrow\infty}\|w(t,\phi)- w^{d} \|_{H}=0
\end{equation}
where $w(t,\phi)$ is the solution of (\ref{eq:sch}) with $w(0,\phi)=\phi.$\\
(3) The bifurcated solution $w^{d}$ can be expressed as
\begin{equation}\label{}
w^{d}=-\frac{\beta_{\lambda_{1}}^{(2)}}{P}\xi e_{\lambda_{1}}(x)
\end{equation}
$\xi$ and $\beta_{\lambda_{1}}^{(2)}$ are shown above.
\end{thm}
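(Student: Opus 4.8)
The plan is to pass to the one-dimensional center manifold of \eqref{eq:sch_abstract} at the critical diffusion rate $d=d_{0}$ and then read off the transition type, the stability of the bifurcated branch, and its leading-order expression from the resulting scalar equation. First I would confirm that the Principle of Exchange of Stability (Definition \ref{def:PES}) holds at $d_{0}$: under the standing hypothesis \eqref{eq:sch_ass} together with $\frac{b-a}{a+b}-(a+b)^{2}<0$ one has $Tr(M_{\lambda_{1}})<0$ for $d$ near $d_{0}$, while $det(M_{\lambda_{1}})=h(\lambda_{1})$ is positive for $d<d_{0}$, vanishes at $d_{0}$, and is negative for $d>d_{0}$ (this is exactly Theorem \ref{thm:main_cont} and Remark \ref{rmk:calculate} specialized to the Schnakenberg coefficients). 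Consequently the single eigenvalue $\beta_{\lambda_{1}}^{(2)}$ crosses zero transversally as $d$ increases through $d_{0}$, while all other $\beta_{k}^{(j)}(d_{0})<0$; this is the single-real-eigenvalue case, so Theorem \ref{thm:cmr_infinite} applies with $E_{1}=\text{span}\{\xi e_{\lambda_{1}}\}$ one-dimensional.

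Next I would compute the reduced equation \eqref{eq:cmr_case1}. Since the lowest-order nonlinearity $G_{2}$ is quadratic and, by \eqref{eq:cmr_fun}, the center manifold function satisfies $\Phi(x)=O(\|x\|^{2})$, the term $\Phi(x)$ contributes only at cubic order to the reduced equation; hence at quadratic order only $\langle G_{2}(x\xi e_{\lambda_{1}}),\xi^{*}e_{\lambda_{1}}\rangle_{H}/\langle \xi e_{\lambda_{1}},\xi^{*}e_{\lambda_{1}}\rangle_{H}$ enters. Substituting the explicit $G_{2}=r(-Hu^{2}+Muv,\,Hu^{2}-Muv)$, the eigenvectors $\xi=(\xi_{1},\xi_{2})$ and $\xi^{*}=(\xi_{1}^{*},\xi_{2}^{*})$ of $M_{\lambda_{1}}$ and $M_{\lambda_{1}}^{*}$, and $h=\langle \xi e_{\lambda_{1}},\xi^{*}e_{\lambda_{1}}\rangle_{H}=(\xi_{1}\xi_{1}^{*}+\xi_{2}\xi_{2}^{*})\int_{\Omega}e_{\lambda_{1}}^{2}\,dx$, one obtains the scalar equation $\dot{x}=\beta_{\lambda_{1}}^{(2)}x+Px^{2}+o(x^{2})$ with $P$ proportional to $(\xi_{2}^{*}-\xi_{1}^{*})\int_{\Omega}e_{\lambda_{1}}^{3}\,dx$, as displayed above. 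The two hypotheses of the theorem are precisely what guarantee $P\neq0$: the condition $\int_{\Omega}e_{\lambda_{1}}^{3}\,dx\neq0$ keeps the integral factor nonzero, and $\lambda_{1}^{2}\neq r$ keeps $\xi_{1}^{*}\neq\xi_{2}^{*}$ and $h\neq0$.

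With $P\neq0$ the reduced flow is a transcritical bifurcation. Besides $x=0$, the equilibrium set contains, for $d$ near $d_{0}$, a unique branch $x^{d}=-\beta_{\lambda_{1}}^{(2)}/P+o(\beta_{\lambda_{1}}^{(2)})$ obtained from $\beta_{\lambda_{1}}^{(2)}+Px+o(x)=0$ by the implicit function theorem, defined on both sides of $d_{0}$, and linearizing at $x^{d}$ gives the center-manifold eigenvalue $-\beta_{\lambda_{1}}^{(2)}+o(\beta_{\lambda_{1}}^{(2)})$. Because all equilibria of \eqref{eq:sch_abstract} near $0$ lie on the local center manifold, they are in bijection with those of the reduced equation, so $w^{d}=x^{d}\xi e_{\lambda_{1}}+\Phi(x^{d})=-\frac{\beta_{\lambda_{1}}^{(2)}}{P}\xi e_{\lambda_{1}}(x)+O((\beta_{\lambda_{1}}^{(2)})^{2})$, which is assertion (3). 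For $d<d_{0}$ we have $\beta_{\lambda_{1}}^{(2)}<0$, so on the center manifold $x=0$ is locally stable and $x^{d}$ a repeller; lifting through the transversally exponentially contracting center manifold (Theorem \ref{thm:cmr_infinite}(3)), $w=0$ is a local attractor and $w^{d}$ is a saddle whose codimension-one stable manifold $\Gamma_{d}^{1}$ splits a neighborhood into the basin $V^{d}\ni0$ and the escaping set $U^{d}$, giving assertion (1). For $d>d_{0}$ we have $\beta_{\lambda_{1}}^{(2)}>0$, so $x=0$ becomes unstable while $x^{d}$ is stable; the stable manifold $\Gamma_{d}^{0}$ of $w=0$ (tangent to the stable modes) separates the neighborhood $O$ into $O_{1}^{d}$ and $O_{2}^{d}$, trajectories from $O_{2}^{d}$ converge to the attractor $w^{d}$ (continuous transition) while those from $O_{1}^{d}$ leave (jump), which is assertion (2); the coexistence of these two behaviors for $d$ just above $d_{0}$ is by definition the mixed transition (classification of transition types, Theorem 2.1.3 of \cite{Ma2015}).

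The main obstacle I anticipate is the bookkeeping in the reduced-equation computation: pairing $G_{2}$ against $\xi^{*}e_{\lambda_{1}}$ component by component, carrying the factors $\int_{\Omega}e_{\lambda_{1}}^{2}\,dx$ and $\int_{\Omega}e_{\lambda_{1}}^{3}\,dx$ correctly through the quotient, and confirming $h\neq0$ so that $P$ is well defined. A secondary point to handle with care is that the \emph{sign} of $P$ must be shown irrelevant to the qualitative picture — the involution $x\mapsto-x$ conjugates the two cases — so that the jump/continuous dichotomy in assertion (2) is governed solely by which side of $\Gamma_{d}^{0}$ the initial datum lies on, not by the particular kinetic parameters.
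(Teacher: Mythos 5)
Your proposal is correct and follows essentially the same route as the paper: reduce to the scalar center-manifold equation $\frac{dx}{dt}=\beta_{\lambda_{1}}^{(2)}x+Px^{2}$, invoke the PES condition (\ref{340}), and read off the transcritical (mixed) transition. The paper's own proof stops after stating the reduced equation and the PES and asserting the conclusions, so your write-up simply supplies the details (why $P\neq0$ under the two hypotheses, the linearization $-\beta_{\lambda_{1}}^{(2)}$ at $x^{d}=-\beta_{\lambda_{1}}^{(2)}/P$, and the lift back through the attracting center manifold) that the paper leaves implicit.
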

\begin{proof}
The reduction system (\ref{322}) equivalents to
\begin{equation}\label{}
\frac{dx}{dt}=\beta_{\lambda_{1}}^{(2)}x+Px^{2}.
\end{equation}
Due to the PES condition as follows:
\begin{align}\label{340}
&\beta_{\lambda_{1}}^{(2)} \left\{
   \begin{array}{ll}
   <0, ~if~ d<d_{0};
   \\=0, ~if~ d=d_{0};
   \\>0, ~if~ d>d_{0};
\end{array}
\right.\\&
\beta_{k}^{(j)}(d_{0})<0, ~for ~all (k,j)\neq(\lambda_{1}2).
\end{align}

Therefore, all the above conclusions hold true.

The transition behavior of \ref{340} is shown in Fig. \ref{fig:dynamic_1}.
\begin{figure}[!hbtp]
\centering
\begin{tikzpicture}[>=stealth]
  \coordinate (o) at (0,0);
  \draw node (a) at (-2,0) {};
  \draw node (b) at (-1,0) {};
  \draw node (c) at (0,0) {};
  \draw node (d) at (1,0) {};
  \draw[thick, ->] (b) -- (a); \draw[thick, ->] (-1,1) -- (b); \draw[thick, ->] (b) -- (c); \draw[thick, ->] (-1,-1) -- (b);
  \draw[thick, ->] (d) -- (c); \draw[thick, ->] (0,1) -- (c); \draw[thick, ->] (0,-1) -- (c);
  \draw[thick, ->] (-1.2,1) to [bend left=45] (-1.8,0.2); \draw[thick, ->] (-0.8,1) to [bend right=45] (-.2,0.2);
  \draw[thick, ->] (-1.2,-1) to [bend right=45] (-1.8,-0.2); \draw[thick, ->] (-0.8,-1) to [bend left=45] (-.2,-0.2);
  \node[above] at (.1,0) {$o$}; \node[right] at (-1,.2) {$v^{d}$};
  \node[below] at (1,-.3) {$U_{2}^{d}$}; \node[below] at (-2,-.5) {$U_{1}^{d}$};
  \node[below] at (-1,-1.3) {$d<d_0$};
  \end{tikzpicture}\hspace{30pt}
  \begin{tikzpicture}[>=stealth]
  \coordinate (o) at (0,0);
  \draw node (a) at (-2,0) {};
  \draw node (b) at (-1,0) {};
  \draw node (c) at (0,0) {};
  \draw node (d) at (1,0) {};
  \draw[thick, ->] (a) -- (b); \draw[thick, ->] (-1,1) -- (b); \draw[thick, ->] (c) -- (b); \draw[thick, ->] (-1,-1) -- (b);
  \draw[thick, ->] (c) -- (d); \draw[thick, ->] (0,1) -- (c);\draw[thick, ->] (0,-1) -- (c);
  \draw[thick, ->] (-.2,1) to [bend left=45] (-.8,0.2); \draw[thick, ->] (.2,1) to [bend right=45] (.8,0.2);
  \draw[thick, ->] (-.2,-1) to [bend right=45] (-.8,-0.2); \draw[thick, ->] (.2,-1) to [bend left=45] (.8,-0.2);
  \node[above] at (-.1,0) {$o$}; \node[left] at (-1,.2) {$v^{d}$};
  \node[below] at (1,-.3) {$O_{1}^{d}$}; \node[below] at (-2,-.5) {$O_{2}^{d}$};
  \node[below] at (-1,-1.3) {$d>d_0$};
    \end{tikzpicture}
  \caption{Transition diagram for Theorem \ref{thm:sch:single}.}\label{fig:dynamic_1}
\end{figure}
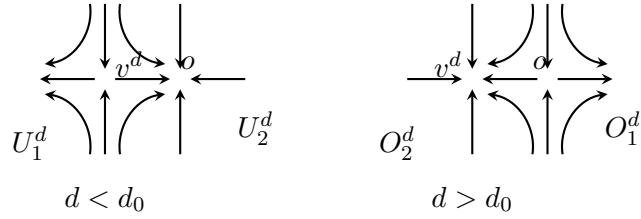
\end{proof}

If (\ref{333}) is not true, i.e,
\begin{equation}\label{339}
\int_{\Omega} e_{\lambda_{1}}^{3}dx=0.
\end{equation}

We introduce the following parameter
\begin{equation}\label{s343}
Q=\frac{1}{h}(\xi_{1}^{*}-\xi_{2}^{*})Mr\xi_{1}\int_{\Omega}\psi_{2}e_{\lambda_{1}}^{2}{\rm d}x-2(\xi_{1}^{*}-\xi_{2}^{*})rH\xi_{1}\int_{\Omega}\psi_{1}e_{\lambda_{1}}^{2}{\rm d}x
\end{equation}
Where$ \psi=(\psi_{1},\psi_{2})$ satisfied the following equation
\begin{align}\label{341}
  &\Delta\psi_{1}+r\frac{b-a}{b+a}\psi_{1}+r(a+b)^{2}\psi_{2}=-rH\xi_{1}^{2}e_{\lambda_{1}}^{2} +rM\xi_{1}\xi_{2}e_{\lambda_{1}}^{2}, \\
  \label{342}&\Delta\psi_{1}-r\frac{2b}{b+a}\psi_{1}-r(a+b)^{2}\psi_{2}=rH\xi_{1}^{2}e_{\lambda_{1}}^{2} -rM\xi_{1}\xi_{2}e_{\lambda_{1}}^{2}.
\end{align}

By the Fredholm Alternative Theorem, under the condition (\ref{339}), the equations (\ref{341})-(\ref{342})
have a unique solutions.
Moreover, we have the following conclusion.

\begin{thm} \label{thm:solution}
Let $\lambda_{1}^{2}\neq r$, $ \int e_{\lambda_{1}}^{3}dx= 0$, and $Q$ be the number given by (\ref{s343}), then the system (\ref{eq:sch}) has a transition at $d=d_{0}$, and the transition of (\ref{eq:sch}) at $d=d_{0}$ is continuous if $Q < 0$, jump if $Q > 0$.
The following assertions hold true:\\
(1)If $Q > 0$,(\ref{eq:sch}) has no bifurcation when $d>d_{0}$, and has exact two bifurcated solutions $w_{+}^{d}$and $w_{-}^{d}$ which are saddles when $d<d_{0}$. Moreover, the stable manifolds $\Gamma_{+}^{d}$ and $\Gamma_{-}^{d}$ of the two bifurcated solutions divide H into three disjoint open sets $U_{+}^{d}$, $U_{0}^{d}$, $U_{-}^{d}$such that $v =0\in U_{0}^{d}$is an attractor, and the orbits in $U_{\pm}^{d}$ are far from $v = 0$.
\\
(2)If $Q < 0$,(\ref{eq:sch}) has no bifurcation when $d< d_{0}$, and has exact two bifurcated solutions $w_{+}^{d}$ and $w_{-}^{d}$ when $d>d_{0}$ , which are attractors. In addition, there is a neighborhood $O \subseteq H $of $w = 0$, such that the stable manifold $\gamma$ of $w = 0 $ divides $O$ into two disjoint open sets $U_{+}^{d}$ and $U_{-}^{d}$ such that $w_{+}^{d} \in U_{+}^{d}$, $w_{-}^{d} \in U_{-}^{d}$, and$w_{\pm}^{d}$ attracts $U_{\pm}^{d}$;
\\
(3)The bifurcated solutions $w_{\pm}^{d}$ can be expressed as
\begin{equation}\label{}
w_{\pm}^{d}=\pm(-\frac{\beta_{\lambda_{1}}^{(2)}}{Q})^{\frac{1}{2}}\xi e_{\lambda_{1}}(x)
\end{equation}
$\xi$ and $\beta_{\lambda_{1}}^{(2)}$ is shown above.
\end{thm}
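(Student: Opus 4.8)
The plan is to reduce \eqref{eq:sch} near $d=d_0$ to the one-dimensional flow on the center manifold of the single critical mode $e_{\lambda_1}$, recognize the reduced scalar equation as a pitchfork, and transport the picture back to $H$ via the center manifold theorem \ref{thm:cmr_infinite} together with the jump/continuous transition theorems of \cite{Ma2015}. Since PES holds at $d=d_0$ (the relations \eqref{340}), the classification theorem already guarantees that a transition occurs there, so the whole task is to decide its type and describe the bifurcated branches. The new feature compared with Theorem \ref{thm:sch:single} is that the hypothesis $\int_\Omega e_{\lambda_1}^3\,dx=0$ makes the quadratic coefficient $P$ in \eqref{322} vanish, so the leading nonlinearity is cubic and the center manifold function must be computed to second order.

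First I would determine $\Phi$. By the center manifold formula of Theorem~3.8 of \cite{Ma2005} (equation \eqref{eq:cmr_fun}), $\Phi(x)=(-L^{(2)})^{-1}P_2 G_2(x\xi e_{\lambda_1})+o(x^2)$, which is exactly $\Phi(x)=x^2\psi+o(x^2)$ with $\psi=(\psi_1,\psi_2)$ solving the linear elliptic system \eqref{341}-\eqref{342}. Here the degeneracy hypothesis is precisely the Fredholm solvability condition: the right-hand sides of \eqref{341}-\eqref{342} are multiples of $e_{\lambda_1}^2$, and they are orthogonal to the kernel spanned by $\xi^* e_{\lambda_1}$ exactly when $\int_\Omega e_{\lambda_1}^3\,dx=0$; this gives a unique $\psi$.

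Next I would substitute $w=x\xi e_{\lambda_1}+x^2\psi+o(x^2)$ into \eqref{322} and collect the $x^3$ terms. Two sources contribute: the genuinely cubic part $G_3=r(u^2v,-u^2v)$ evaluated at $x\xi e_{\lambda_1}$, and the cross terms produced when $G_2$ is expanded about the center manifold, pairing $x\xi e_{\lambda_1}$ against the quadratic part $x^2\psi$; after pairing with $\xi^* e_{\lambda_1}$ and dividing by $h$ these assemble into the coefficient $Q$ of \eqref{s343}. Thus \eqref{322} becomes the pitchfork normal form $\dot x=\beta_{\lambda_1}^{(2)}(d)\,x+Qx^3+o(x^3)$, with $\beta_{\lambda_1}^{(2)}(d)$ obeying \eqref{340}, i.e. vanishing and changing sign at $d=d_0$.

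Finally I would analyze this scalar equation. Besides $x=0$ it has the equilibria $x_\pm=\pm(-\beta_{\lambda_1}^{(2)}/Q)^{1/2}$, which exist precisely on the side of $d_0$ where $-\beta_{\lambda_1}^{(2)}/Q>0$, and there the linearization equals $-2\beta_{\lambda_1}^{(2)}$. Hence if $Q<0$ the branches bifurcate for $d>d_0$ and are attracting (supercritical, continuous transition), whereas if $Q>0$ they bifurcate for $d<d_0$ as unstable equilibria -- thus saddles of the full flow -- and for $d>d_0$ nothing but the unstable $x=0$ survives (subcritical, jump transition). Lifting through Theorem~\ref{thm:cmr_infinite} and the attractor bifurcation theorems of \cite{Ma2015} yields the stable-manifold decompositions of $H$ (resp. of a neighborhood $O$ of $w=0$) in (1) and (2), and the leading-order expansion $w_\pm^d=x_\pm\xi e_{\lambda_1}+\Phi(x_\pm)=\pm(-\beta_{\lambda_1}^{(2)}/Q)^{1/2}\xi e_{\lambda_1}(x)+o(\cdot)$ gives (3). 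The main obstacle will be the third step: solving \eqref{341}-\eqref{342} explicitly enough, correctly collecting every $x^3$ contribution into $Q$, and verifying that the $o(x^3)$ remainder and the higher-order center manifold corrections cannot change the sign structure -- which also quietly requires $Q\neq0$ so that the pitchfork is nondegenerate.
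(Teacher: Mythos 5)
Your proposal follows essentially the same route as the paper's own proof: compute the second-order center manifold correction $\psi=(\psi_1,\psi_2)$ from \eqref{341}--\eqref{342} (solvable by the Fredholm alternative precisely because $\int_\Omega e_{\lambda_1}^3\,dx=0$ puts the right-hand side in $E_2$), reduce \eqref{322} to the scalar pitchfork $\dot x=\beta_{\lambda_1}^{(2)}x+Qx^3$, and read off the subcritical/jump versus supercritical/continuous dichotomy from the sign of $Q$ before lifting back through the center manifold theorem. Your additional observation that the genuinely cubic term $G_3=r(u^2v,-u^2v)$ contributes its own $x^3$ term (proportional to $\int_\Omega e_{\lambda_1}^4\,dx$, hence generically nonzero) is correct and is in fact a contribution that the paper's displayed formula \eqref{s343} for $Q$ appears to omit, so on this point your account is the more careful one.
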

\begin{proof}
To prove this result, we need to calculate the center manifold function $\Phi(x)$. By the procedures in Section 3 of \cite{Ma2005} and Appendix A of \cite{Ma2015},
$\Phi(x)$ satisfies:
\begin{equation}\label{345}
L_{\lambda}\Phi=-P_{2}G(x\xi e_{\lambda_{1}})
\end{equation}
where $ P_{2}: H\rightarrow E_{2} $ is the canonical projection, $L_{\lambda}$ is as in (\ref{305}), $e_{\lambda_{1}}$ and $e_{\lambda_{1}}^{*}$ are given by (\ref{339}), and
\begin{equation}\label{}
E_{2}=\{v\in H:(v,e_{\lambda_{1}}^{*})=0\}
\end{equation}
we can see that
\begin{eqnarray}
  G(x\xi e_{\lambda_{1}})=( -rH\xi_{1}^{2}e_{\lambda_{1}}^{2}x^{2}+rM\xi_{1}\xi_{2}e_{\lambda_{1}}^{2}x^{2}, \\
 rH\xi_{1}^{2}e_{\lambda_{1}}^{2}x^{2}-rM\xi_{1}\xi_{2}e_{\lambda_{1}}^{2}x^{2}).
\end{eqnarray}
Let
\begin{equation}\label{349}
\Phi=\psi x^{2}+0(2),\psi =(\psi_{1},\psi_{2})\in H.
\end{equation}
By (\ref{339}), $(e_{\lambda_{1}}^{2},-e_{\lambda_{1}}^{2})\in E_{2}$. Hence, it follows from (\ref{345}) and (\ref{349}) that\\
\begin{equation}\label{}
L_{\lambda}\psi=(-r^{2}H\xi_{1}^{2}x^{2}+rM\xi_{1}\xi_{2}x^{2})(e_{\lambda_{1}}^{2},-e_{\lambda_{1}}^{2})
\end{equation}
By direct calculation, we obtain the following
\begin{align}\label{}
   &<G(x\xi e_{\lambda_{1}}+\psi x_{2}),\xi^{*}e_{\lambda_{1}}>_{H}\\
  &=((Mr\xi_{1}\psi_{2}e_{\lambda_{1}}-2rH\xi_{1}\psi_{1})x^{3}+0(3)\\
 &-(Mr\xi_{1}\psi_{2}e_{\lambda_{1}}-2rH\xi_{1}\psi_{1})x^{3}+0(3))=Q x^{3},
\end{align}
where
\begin{equation}\label{}
Q=\frac{1}{h}(\xi_{1}^{*}-\xi_{2}^{*})Mr\xi_{1}\int_{\Omega}\psi_{2}e_{\lambda_{1}}^{2}{\rm d}x-2(\xi_{1}^{*}-\xi_{2}^{*})rH\xi_{1}\int_{\Omega}\psi_{1}e_{\lambda_{1}}^{2}{\rm d}x
\end{equation}

In the end, we get the center manifold reduction system as follows:
\begin{equation}\label{}
\frac{dx}{dt}=\beta_{\lambda_{1}}^{(2)}x+Qx^{3}.
\end{equation}
Whose steady state solutions are
\begin{equation}\label{}
x_{\pm}=\pm(-\frac{\beta_{\lambda_{1}}^{(2)}}{Q})^{\frac{1}{2}}
\end{equation}
Therefore, following procedures found in Section 3 of \cite{Ma2005} and Appendix A of \cite{Ma2015}, all the above conclusions hold true.

Fig. \ref{fig:dynamic_2} and \ref{fig:dynamic_3} are corresponding transition diagrams for the cases when $Q>0$ and $Q<0$ respectively.

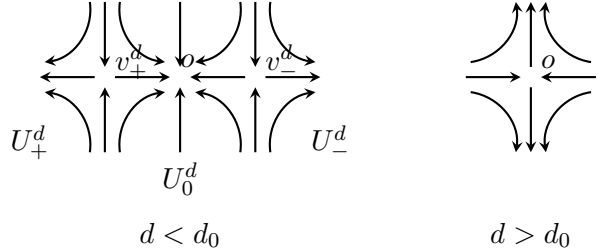
\begin{figure}[!hbtp] \label{fig:dynamic_2}
  \centering
  \begin{tikzpicture}[>=stealth]
  \coordinate (o) at (0,0);
  \draw node (a) at (-2,0) {};
  \draw node (b) at (-1,0) {};
  \draw node (c) at (0,0) {};
  \draw node (d) at (1,0) {};
  \draw node (e) at (2,0) {};
  \draw[thick, ->] (b) -- (a); \draw[thick, ->] (-1,1) -- (b); \draw[thick, ->] (b) -- (c); \draw[thick, ->] (-1,-1) -- (b);
  \draw[thick, ->] (d) -- (e); \draw[thick, ->] (1,1) -- (d); \draw[thick, ->] (d) -- (c); \draw[thick, ->] (1,-1) -- (d);
  \draw[thick, ->] (0,-1) -- (c); \draw[thick, ->] (0,1) -- (c); 
  \draw[thick, ->] (-1.2,1) to [bend left=45] (-1.8,0.2); \draw[thick, ->] (-0.8,1) to [bend right=45] (-.2,0.2);
  \draw[thick, ->] (-1.2,-1) to [bend right=45] (-1.8,-0.2); \draw[thick, ->] (-0.8,-1) to [bend left=45] (-.2,-0.2);
  \draw[thick, ->] (1.2,1) to [bend right=45] (1.8,0.2); \draw[thick, ->] (0.8,1) to [bend left=45] (.2,0.2);
  \draw[thick, ->] (1.2,-1) to [bend left=45] (1.8,-0.2); \draw[thick, ->] (0.8,-1) to [bend right=45] (.2,-0.2);
  \node[above] at (.1,0) {$o$}; \node[right] at (-1,.2) {$v_{+}^{d}$}; \node[right] at (1,.2) {$v_{-}^{d}$};
  \node[below] at (0,-1) {$U_{0}^{d}$}; \node[below] at (-2,-.5) {$U_{+}^{d}$}; \node[below] at (2,-.5) {$U_{-}^{d}$};
  \node[below] at (0,-1.8) {$d<d_0$};
    \end{tikzpicture}\hspace{30pt}
  \begin{tikzpicture}[>=stealth]
  \coordinate (o) at (0,0);
  \draw node (b) at (-1,0) {};
  \draw node (c) at (0,0) {};
  \draw node (d) at (1,0) {};
  \draw[thick, ->] (c) -- (0,1); \draw[thick, ->] (c) -- (0,-1);
  \draw[thick, ->] (b) -- (c); \draw[thick, ->] (d) -- (c); 
  \draw[thick, ->] (-.8,.2) to [bend right=45] (-.2,1); \draw[thick, ->] (0.8,.2) to [bend left=45] (.2,1);
  \draw[thick, ->] (-.8,-.2) to [bend left=45] (-.2,-1); \draw[thick, ->] (0.8,-.2) to [bend right=45] (.2,-1);
  \node[right] at (0,.2) {$o$};
  \node[below] at (0,-1.8) {$d>d_0$};
  \end{tikzpicture}
  \caption{Transition diagram when $Q>0$.}\label{fig:dynamic_2}
  \end{figure}


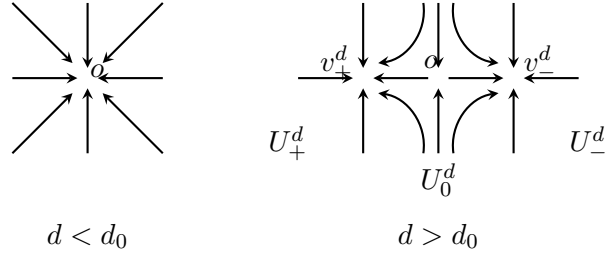
\begin{figure}[!hbtp]
  \centering
  \begin{tikzpicture}[>=stealth]
  \coordinate (o) at (0,0);
  \draw node (b) at (-1,0) {};
  \draw node (c) at (0,0) {};
  \draw node (d) at (1,0) {};
  \draw[thick, ->] (0,1) -- (c); \draw[thick, ->] (0,-1) -- (c);
  \draw[thick, ->] (1,0) -- (c); \draw[thick, ->] (-1,0) -- (c);
  \draw[thick, ->] (1,1) -- (.2,.2); \draw[thick, ->] (-1,-1) -- (-.2,-.2);
  \draw[thick, ->] (1,-1) -- (.2,-.2); \draw[thick, ->] (-1,1) -- (-.2,.2);
  \node[right] at (-.1,.1) {$o$};
  \node[below] at (0,-1.8) {$d<d_0$};
  \end{tikzpicture}\hspace{30pt}
  \begin{tikzpicture}[>=stealth]
  \coordinate (o) at (0,0);
  \draw node (a) at (-2,0) {};
  \draw node (b) at (-1,0) {};
  \draw node (c) at (0,0) {};
  \draw node (d) at (1,0) {};
  \draw node (e) at (2,0) {};
  \draw[thick, ->] (a) -- (b); \draw[thick, ->] (-1,1) -- (b); \draw[thick, ->] (c) -- (b); \draw[thick, ->] (-1,-1) -- (b);
  \draw[thick, ->] (e) -- (d); \draw[thick, ->] (1,1) -- (d); \draw[thick, ->] (c) -- (d); \draw[thick, ->] (1,-1) -- (d);
  \draw[thick, ->] (0,-1) -- (c); \draw[thick, ->] (0,1) -- (c); 
  \draw[thick, ->] (-.2,1) to [bend left=45] (-.8,0.2); \draw[thick, ->] (.2,1) to [bend right=45] (.8,0.2);
  \draw[thick, ->] (-.2,-1) to [bend right=45] (-.8,-0.2); \draw[thick, ->] (.2,-1) to [bend left=45] (.8,-0.2);
  \node[above] at (-.1,0) {$o$}; \node[left] at (-1,.2) {$v_{+}^{d}$}; \node[right] at (1,.2) {$v_{-}^{d}$};
  \node[below] at (0,-1) {$U_{0}^{d}$}; \node[below] at (-2,-.5) {$U_{+}^{d}$}; \node[below] at (2,-.5) {$U_{-}^{d}$};
  \node[below] at (0,-1.8) {$d>d_0$};
    \end{tikzpicture}
  \caption{Transition diagram when $Q<0$.}\label{fig:dynamic_3}
\end{figure}

\end{proof}
~~~If  $\lambda_{1}^{2}= r$, for the case that $P=Q=0$, we have the following conclusion.
\begin{thm}
The attractor $ v=0$ is stable if $d<d_{0}$, and unstable if $d<d_{0}$ for system (\ref{eq:sch}), there is no solution bifurcated from the critical parameter $d=d_{0}$.
\end{thm}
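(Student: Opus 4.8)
The plan is to run the one–dimensional center manifold reduction \eqref{322} already set up for the single–eigenvalue case, specialized to the degenerate value $\lambda_{1}^{2}=r$. First I would record that this condition forces both the quadratic coefficient $P$ in \eqref{322} and the cubic coefficient $Q$ of \eqref{s343} to vanish: this is a direct substitution of $\lambda_{1}^{2}=r$ into the explicit formulas for $\xi$, $\xi^{*}$, $h$, $H$, $M$ and the center manifold function $\psi$ solving \eqref{341}--\eqref{342}, the point being that this value collapses the relevant bracketed combinations. Granting $P=Q=0$, the reduced equation \eqref{322} becomes, to leading order, the purely linear scalar equation
\begin{equation*}
\frac{dx}{dt}=\beta_{\lambda_{1}}^{(2)}x+o(|x|^{3}).
\end{equation*}

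Next I would read off the dynamics from the sign of $\beta_{\lambda_{1}}^{(2)}$, which is governed by the PES relations \eqref{340}: for $d<d_{0}$ all eigenvalues of $L_{\lambda}$ have negative real part, so by the principle of linearized stability for the sectorial operator governing \eqref{eq:sch} the homogeneous state $w=0$ is locally asymptotically stable; for $d>d_{0}$ one has $\beta_{\lambda_{1}}^{(2)}>0$, so $w=0$ is unstable. This part is essentially a restatement of Theorem \ref{thm:main_cont} and of the corollary identifying $d_{0}$ as the critical transition parameter. The genuinely new assertion is the absence of a bifurcated branch at $d=d_{0}$: since the reduced equation carries no quadratic or cubic nonlinearity, there is no small nonzero equilibrium of the type $x\sim(-\beta_{\lambda_{1}}^{(2)}/P)$ or $x\sim(-\beta_{\lambda_{1}}^{(2)}/Q)^{1/2}$ that appeared in Theorems \ref{thm:sch:single} and \ref{thm:solution}, and because the center manifold contains every equilibrium of \eqref{eq:sch} in a neighborhood of $(u_{0},v_{0})$, no solution bifurcates from $d=d_{0}$.

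The main obstacle is closing this last step rigorously: $P=Q=0$ only kills the quadratic and cubic terms, while the Schnakenberg nonlinearity $G=G_{2}+G_{3}$, once composed with $\Phi(x)=\psi x^{2}+O(x^{3})$ and projected onto $\xi^{*}e_{\lambda_{1}}$, could in principle still produce nonzero terms of order $x^{4},\dots,x^{6}$, and such a coefficient $c_{k}\neq0$ would create a branch $x\sim(-\beta_{\lambda_{1}}^{(2)}/c_{k})^{1/(k-1)}$. To rule this out I would exploit the anti–symmetric structure $g_{2}=-g_{1}$ of the kinetics: the nonlinearity lies in the fixed direction $(1,-1)^{T}$, so for $\bu=(u_{1},v_{1})$ on the center manifold one has $\langle G(\bu),\xi^{*}e_{\lambda_{1}}\rangle_{H}=(\xi_{1}^{*}-\xi_{2}^{*})\int_{\Omega}g_{1}(u_{1},v_{1})\,e_{\lambda_{1}}\,dx$, and I would show order by order in $x$ that when $\lambda_{1}^{2}=r$ the $e_{\lambda_{1}}$–component of $g_{1}$ restricted to the manifold vanishes — equivalently, that the center manifold function $\psi$ of \eqref{341}--\eqref{342}, whose $(u_{1}+v_{1})$–combination satisfies a degenerate linear equation, cancels every resonant term exactly. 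If a full induction proves cumbersome, a weaker but still conclusive route is to verify by the same substitution that the first surviving nonlinear coefficient is zero for each relevant order $k$, so that \eqref{322} reduces exactly to $\dot x=\beta_{\lambda_{1}}^{(2)}x$, whose only equilibrium near the origin is $x=0$.
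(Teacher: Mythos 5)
Your reading of the stability assertions matches the paper's: the PES relations \eqref{340} give $\beta_{\lambda_{1}}^{(2)}<0$ for $d<d_{0}$ and $\beta_{\lambda_{1}}^{(2)}>0$ for $d>d_{0}$, and the paper simply reads these conclusions off the reduced equation. The divergence is in the ``no bifurcated solution'' claim, and there your proposed completion would not go through. You correctly factor the projected nonlinearity as
\begin{equation*}
\langle G(w),\xi^{*}e_{\lambda_{1}}\rangle_{H}
=(\xi_{1}^{*}-\xi_{2}^{*})\int_{\Omega}g_{1}(u_{1},v_{1})\,e_{\lambda_{1}}\,dx ,
\end{equation*}
but you then set out to prove that the \emph{integral} factor vanishes order by order on the center manifold. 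It does not: already at order $x^{2}$ the integral equals $s_{1}\int_{\Omega}e_{\lambda_{1}}^{3}\,dx$ with $s_{1}=r\xi_{1}\bigl(M\xi_{2}-H\xi_{1}\bigr)$, and substituting $\lambda_{1}^{2}=r$ gives $M\xi_{2}-H\xi_{1}=(4a-b)/(a+b)$, which is nonzero in general; so whenever $\int_{\Omega}e_{\lambda_{1}}^{3}\,dx\neq 0$ the integral factor survives. An order-by-order cancellation of resonant terms through the function $\psi$ of \eqref{341}--\eqref{342} is therefore not available, and the induction you anticipate being ``cumbersome'' would in fact fail at its first step.

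The paper's proof is instead the one-line observation that the hypothesis $\lambda_{1}^{2}=r$ is precisely the condition $\xi_{1}^{*}=\xi_{2}^{*}$: it is the \emph{scalar prefactor} in your own displayed identity that vanishes, not the integral. Since $g_{2}=-g_{1}$ puts the full nonlinearity (not merely $G_{2}$ and $G_{3}$) in the fixed direction $(1,-1)^{T}$, the factor $(\xi_{1}^{*}-\xi_{2}^{*})=0$ annihilates the projection at every order and independently of the center manifold function $\Phi$; the reduction \eqref{322} is then exactly $\dot{x}=\beta_{\lambda_{1}}^{(2)}x$, whose only equilibrium near the origin is $x=0$, and all three assertions follow at once. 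So the missing idea is not a higher-order induction but the identification of $\lambda_{1}^{2}=r$ with $\xi_{1}^{*}=\xi_{2}^{*}$. (When writing this up you should check the sign of that identification against the explicit formula for $\xi^{*}$ — as literally printed it yields $\xi_{1}^{*}=\xi_{2}^{*}$ at $\lambda_{1}^{2}=-r$ — but whichever sign is correct, it is this prefactor, and not the integral, that carries the whole argument.)
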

\begin{proof}
The condition $\lambda_{1}^{2}= r$ is equavalent to $\xi_{1}^{*}=\xi_{2}^{*} $, so $P=Q=0$ . The conclusion can be simply obtained from the following reduction system
\begin{equation}\label{}
\frac{dx}{dt}=\beta_{\lambda_{1}}^{(2)}x
\end{equation}
The proof is complete.
\end{proof}


\subsubsection{The double real eigenvalues transition of the Schnakenberg system.}

For the PES (\ref{340}), there exists real double eigenvalues transition for $d>d_{0}$. let $i_{2}=i_{1}+1$ and $i=i_{1}$, then the dynamic transition behavior of the system can be dictated by the center manifold reduction equations for this case as follows.
\begin{eqnarray}\label{}
\left\{
   \begin{array}{ll}
   \frac{dx}{dt}=\beta_{\lambda_{i_{1}}}^{2}x+\frac{1}{h_{1}}
    \langle G_{2}(x\xi e_{\lambda_{i_{1}}}+y\eta e_{\lambda_{i_{2}}}),\xi ^{*}e_{\lambda_{i_{1}}}\rangle_{H},
   \\ \frac{dy}{dt}=\beta_{\lambda_{i_{2}}}^{2}y+\frac{1}{h_{2}}
    \langle G_{2}(x\xi e_{\lambda_{i_{1}}}+y\eta e_{\lambda_{i_{2}}}),\eta ^{*}e_{\lambda_{i_{2}}}\rangle_{H},
\end{array}
\right.
\end{eqnarray}
where
\begin{equation}\label{}
  \xi =(\xi_{1} ,\xi_{2} )=(a+b,\frac{(a+b)\lambda_{i_{1}}^{2}-(b-a)r}{r(a+b)^{2}})^T,
\end{equation}
\begin{equation}\label{}
  \eta =(\eta_{1},\eta_{2} )=(a+b,\frac{(a+b)\lambda_{i_{2}}^{2}-(b-a)r}{r(a+b)^{2}})^T,
\end{equation}
\begin{equation}\label{}
  \xi =(\xi_{1}^{*} ,\xi_{2}^{*} )=(\frac{2br(a+b)}{(b-a)r-(a+b)\lambda_{i_{1}}^{2}},a+b)^T,
\end{equation}
\begin{equation}\label{}
  \eta^{*} =(\eta_{1}^{*},\eta_{2}^{*} )=(\frac{2br(a+b)}{(b-a)r-(a+b)\lambda_{i_{2}}^{2}},a+b)^T,
\end{equation}
\begin{equation}\label{}
 h_{1}=\bigg(\frac{2br(a+b)^{2}}{(b-a)r-(a+b)\lambda_{i_{1}}}
  +\frac{(a+b)\lambda_{i_{1}}-(b-a)r}{r(a+b)}\bigg)\int_{\Omega} e_{\lambda_{i_{1}}}^{2}dx,
\end{equation}
\begin{equation}\label{}
 h_{2}=\bigg(\frac{2br(a+b)^{2}}{(b-a)r-(a+b)\lambda_{i_{2}}}
  +\frac{(a+b)\lambda_{i_{2}}-(b-a)r}{r(a+b)}\bigg)\int_{\Omega} e_{\lambda_{i_{2}}}^{2}dx.
\end{equation}
Let
 \begin{align}\label{}
  &s_{1}=-Hr\xi_{1}^{2}+Mr\xi_{1}\xi_{2},\\
   & s_{2}=-2Hr\xi_{1}\eta_{1}+Mr(\xi_{1}\eta_{2}
    +\xi_{2}\eta_{1}),\\
    &s_{3}=-2Hr\eta_{1}^{2}+Mr\eta_{1}\eta_{2},
 \end{align}
where
\begin{equation}\label{}
   H=\frac{b}{(a+b)^{2}},~~~~~M=2(a+b)
\end{equation}
then,
 \begin{align}\label{}
 & G_{2}(x\xi e_{\lambda_{i_{1}}}+y\eta e_{\lambda_{i_{2}}})=(M_{1},-M_{1}),\\
  &M_{1}=s_{1}e_{\lambda_{i_{1}}}^{2}x^{2}
  +s_{2}e_{\lambda_{i_{1}}}e_{\lambda_{i_{2}}}xy
  +s_{3}e_{\lambda_{i_{2}}}^{2}y^{2}.
 \end{align}

Denote
 \begin{align}\label{}
 &a_{20}=\frac{s_{1}(\xi_{1}^{*}-\xi_{2}^{*})}{h_{1}}\int_{\Omega} e_{\lambda_{i_{1}}}^{3}dx,\\
 &a_{11}=\frac{s_{2}(\xi_{1}^{*}-\xi_{2}^{*})}{h_{1}}\int_{\Omega} e_{\lambda_{i_{1}}}^{2}e_{\lambda_{i_{2}}}dx,\\
 &a_{02}=\frac{s_{3}(\xi_{1}^{*}-\xi_{2}^{*})}{h_{1}}\int_{\Omega} e_{\lambda_{i_{2}}}^{2}e_{\lambda_{i_{1}}}dx,\\
 &b_{20}=\frac{s_{1}(\eta_{1}^{*}-\eta_{2}^{*})}{h_{2}}\int_{\Omega} e_{\lambda_{i_{2}}}^{3}dx,\\
 &b_{11}=\frac{s_{2}(\eta_{1}^{*}-\eta_{2}^{*})}{h_{2}}\int_{\Omega} e_{\lambda_{i_{2}}}^{2}e_{\lambda_{i_{1}}}dx,\\
 &b_{02}=\frac{s_{3}(\eta_{1}^{*}-\eta_{2}^{*})}{h_{2}}\int_{\Omega} e_{\lambda_{i_{1}}}^{2}e_{\lambda_{i_{2}}}dx,
 \end{align}
then, (\ref{eq:sch}) can be written as
\begin{eqnarray}\label{376}
\left\{
   \begin{array}{ll}
   \frac{dx}{dt}=\beta_{\lambda_{i_{1}}}^{2}x
   +a_{20}x^{2}+a_{11}xy+a_{02}y^{2},
   \\ \frac{dy}{dt}=\beta_{\lambda_{i_{2}}}^{2}y
   +b_{20}x^{2}+b_{11}xy+b_{02}y^{2}.
\end{array}
\right.
\end{eqnarray}

Denote by
\begin{align}\label{}
  &A_{1}=\beta_{\lambda_{i_{1}}1}+2a_{20}x_{0}+a_{11}y_{0},
  \\
  &A_{2}=2y_{0}a_{02}+a_{11}y_{0},
  \\
  &A_{3}=2b_{02}x_{0}^{2}+b_{11}y_{0},
  \\
  &A_{4}=\beta_{\lambda_{i_{2}}1}+2b_{20}y_{0}+b_{11}x_{0}.
\end{align}

Hence, the transition for (\ref{eq:sch}) is equivalent to the transition for (\ref{376}). The transition theorem is stated as follows.
\begin{thm}\label{thm:sch_2}
The system (\ref{eq:sch}) has a stable steady state $(0,0)$ for $d<d_{0}$, but bifurcates a new steady state $(x_{0},y_{0})$  for $d>d_{0}$, if and only if the following conditions hold true
\begin{eqnarray}\label{382}
  A_{1}+A_{4}<0,
  A_{1}A_{4}-A_{2}A_{3}>0,
\end{eqnarray}
and there is a stable attractor $u^{d}$ of the system bifurcated from $d>d_{0}$  as follows
\begin{equation*}\label{}
  u^{d}=x_{0}\xi e_{\lambda_{i_{1}}}+y_{0}\eta e_{\lambda_{i_{2}}}
\end{equation*}
\end{thm}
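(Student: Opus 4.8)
The plan is to transfer the whole question onto the two-dimensional reduced system \eqref{376} and then carry out a textbook planar bifurcation and stability analysis. First I would invoke the infinite-dimensional center manifold theorem (Theorem \ref{thm:cmr_infinite}) together with the explicit double-mode reduction that produces \eqref{376}: in the double real eigenvalue case the PES amounts to both critical eigenvalues $\beta^{(2)}_{\lambda_{i_1}}(d)$ and $\beta^{(2)}_{\lambda_{i_2}}(d)$ crossing zero simultaneously at $d=d_0$, being negative for $d<d_0$ and positive for $d>d_0$, while every remaining eigenvalue of $L_\lambda$ stays strictly in the left half plane near $d_0$. Consequently the transition of \eqref{eq:sch} at the homogeneous steady state is completely determined by that of \eqref{376} on $\mathrm{span}\{e_{\lambda_{i_1}},e_{\lambda_{i_2}}\}$; in particular the trivial state is asymptotically stable for $d<d_0$ and loses stability at $d=d_0$, which is the first half of the assertion.

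Next I would exhibit the nontrivial equilibrium of \eqref{376}, i.e. a solution of the quadratic system obtained by setting its right-hand side to zero. Because the linear part degenerates at $d_0$, this is not a classical transcritical or pitchfork situation; the natural device is a blow-up rescaling $x=\varepsilon\tilde x$, $y=\varepsilon\tilde y$ with $\varepsilon$ comparable to $\max\{|\beta^{(2)}_{\lambda_{i_1}}(d)|,|\beta^{(2)}_{\lambda_{i_2}}(d)|\}$, which converts \eqref{376} into a regular perturbation of a fixed homogeneous quadratic system in $(\tilde x,\tilde y)$. An isolated nonzero root of that leading system then persists, via the implicit function theorem, as a branch $(x_0(d),y_0(d))\to(0,0)$ as $d\to d_0^{+}$. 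This is where a nondegeneracy hypothesis on the coefficients $a_{ij},b_{ij}$ enters (equivalently on the mode integrals $\int_\Omega e_{\lambda_{i_1}}^3\,dx$, $\int_\Omega e_{\lambda_{i_1}}^2 e_{\lambda_{i_2}}\,dx$, and so on), guaranteeing both the existence and the isolatedness of the relevant root.

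With $(x_0,y_0)$ in hand I would linearize \eqref{376} at it; the Jacobian is exactly the matrix whose entries are the numbers $A_1,A_2,A_3,A_4$ recorded just before the theorem. By the Routh--Hurwitz criterion in dimension two, $(x_0,y_0)$ is asymptotically stable---hence an attractor of \eqref{376}, and through the center manifold reduction an attractor of \eqref{eq:sch}---if and only if $A_1+A_4<0$ and $A_1A_4-A_2A_3>0$; if either inequality fails an eigenvalue crosses into the right half plane and the bifurcated state is not attracting, which supplies the converse. Lifting back through the center manifold graph $\Phi$, which satisfies $\Phi(0)=0$, $D\Phi(0)=0$ and contributes only terms of order $\|(x_0,y_0)\|^2$, the attractor of \eqref{eq:sch} bifurcated for $d>d_0$ is $u^{d}=x_0\xi e_{\lambda_{i_1}}+y_0\eta e_{\lambda_{i_2}}$ up to higher-order corrections, exactly as stated.

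The step I expect to be the real obstacle is the existence and uniqueness of the bifurcated branch $(x_0,y_0)$: since \eqref{376} carries no linear part at criticality one cannot quote the standard attractor-bifurcation theorems directly, and must instead carefully justify the rescaling and single out the correct root of the leading homogeneous quadratic system, which in general is what forces the extra nondegeneracy assumptions. Everything downstream---the Jacobian computation, the $2\times 2$ Routh--Hurwitz test, and the center manifold lift---is routine bookkeeping.
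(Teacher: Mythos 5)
Your proposal is correct and its core coincides with the paper's argument: the paper's entire proof consists of writing down the Jacobian of the reduced planar system \eqref{376} at $(x_0,y_0)$ (the matrix with entries $A_1,\dots,A_4$) and invoking the two-dimensional trace--determinant criterion \eqref{382}, exactly your Routh--Hurwitz step, followed by lifting the equilibrium back to $u^d=x_0\xi e_{\lambda_{i_1}}+y_0\eta e_{\lambda_{i_2}}$. Where you go beyond the paper is precisely the point you flag as the real obstacle: the existence and isolation of the nontrivial root $(x_0,y_0)$ of the quadratic system. The paper simply posits this equilibrium and never addresses why the purely quadratic right-hand side at criticality admits a branch bifurcating from the origin; your blow-up rescaling $x=\varepsilon\tilde x$, $y=\varepsilon\tilde y$ with $\varepsilon$ of the order of the critical eigenvalues, plus an implicit-function-theorem continuation of an isolated root of the leading homogeneous quadratic system under a nondegeneracy condition on the coefficients $a_{ij},b_{ij}$, is exactly the missing justification. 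So your write-up is not a different route but a completed version of the paper's route; the only caution is that the nondegeneracy hypotheses you introduce for the existence step are genuinely additional assumptions not stated in the theorem, and you should make them explicit rather than absorb them silently.
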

\begin{proof}
The following matrix is the Jacobian of the system at $(x_{0},y_{0})$
\begin{equation}
\left(
\begin{array}{cc}
    A_{1}& A_{2}\\
    A_{3}& A_{4}
    \end{array}
\right)
\end{equation}
The condition (\ref{382}) tell us that the $(x_{0},y_{0})$ is a stable attractor, which can be written as $u^{d}=x_{0}\xi e_{\lambda_{i_{1}}}+y_{0}\eta e_{\lambda_{i_{2}}} $.
\end{proof}


\subsection{An illustrative example}
Taking $\Omega$ to be the rectangle $[0,10]\times[0,5]$, if we consider Dirichlet boundary condition, then we have
  \begin{eqnarray}
   e_{m,n}(x)=cos(\frac{m\pi x}{10})cos(\frac{n\pi y}{5}),
    \\ \lambda_{m,n}(x)=(\frac{m\pi}{10})^{2} + (\frac{n\pi}{5})^{2},
 \end{eqnarray}
and $m,n=0,1,2,3,4\ldots$.

By directly calculating, we can get
\begin{align}\label{eq:sch_ex_lambda}
\begin{aligned}
  &\lambda_{0,1}=0.3948, \lambda_{1,0}=0.0987, \\
  &\lambda_{2,0}=0.3948, \lambda_{1,1}=0.4935 ,\\
  &\lambda_{1,2}=1.6778, \lambda_{0,2}=1.5791,
  \end{aligned}
\end{align}
\begin{figure}[!hbtp]
  \centering
  \includegraphics[width=.8\textwidth]{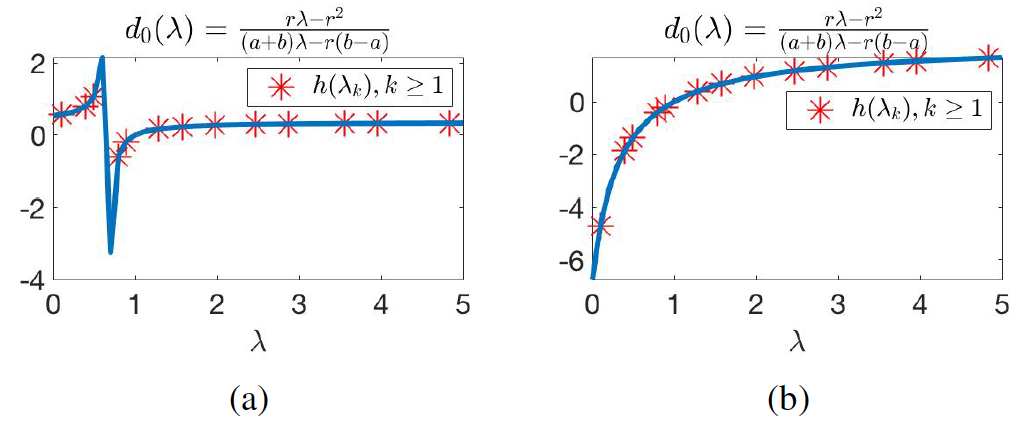}
  \caption{Plot of the critical parameter $d_0^{\lambda}$ as a function of $\lambda$ when (a) $a=0.1,b=0.5,r=1$, (b) $a=1,b=0.5,r=1$. The red stars correspond to the eigenvalues of $-\Delta$ in $\Omega$ as calculated in \eqref{eq:sch_ex_lambda}}\label{fig:sch_d0}
\end{figure}

For system (\ref{eq:sch}), we consider two sets of parameters differing only in $a$, specifically
\bea
&\text{~(a)~} &a=0.1, b=0.5,r=1, \label{eq:example_coeff_a} \\
&\text{~(b)~} &a=1, b=0.5,r=1. \label{eq:example_coeff_b}
\eea
then for case (a), we get  (calculated using \textit{MATLAB})
\begin{eqnarray}
D_{v}^{0}=5.1648, k_{c}^{0}=0.2985
\end{eqnarray}
since $\lambda_{i}$ in Theorem \ref{thm:main_cont} is $\lambda_{1,0}=0.0987$ and $\lambda_{0,1}=0.3948 $, for case (a),  $\lambda_{1,0}< k_{c}^{0} < \lambda_{0,1}$ is satisfied and the critical parameter $d_{0} = $ is given by
\bea
d_{0}=\min(d_{0}^{\lambda_{1,0}},d_{0}^{\lambda_{0,1}})= 0.6157.
\eea
Therefore, Turing's instability and the transition of the system (\ref{eq:sch}) with coefficients  (\ref{eq:example_coeff_a}) occurs when diffusion rate of $v$ is greater than $d_{0}=0.6157$. The numerical simulation of this case is illustrated in Fig. \ref{fig:sch_unstable} below.

 \begin{figure}[!ht]
 	\centering
	\includegraphics[width = .8\textwidth]{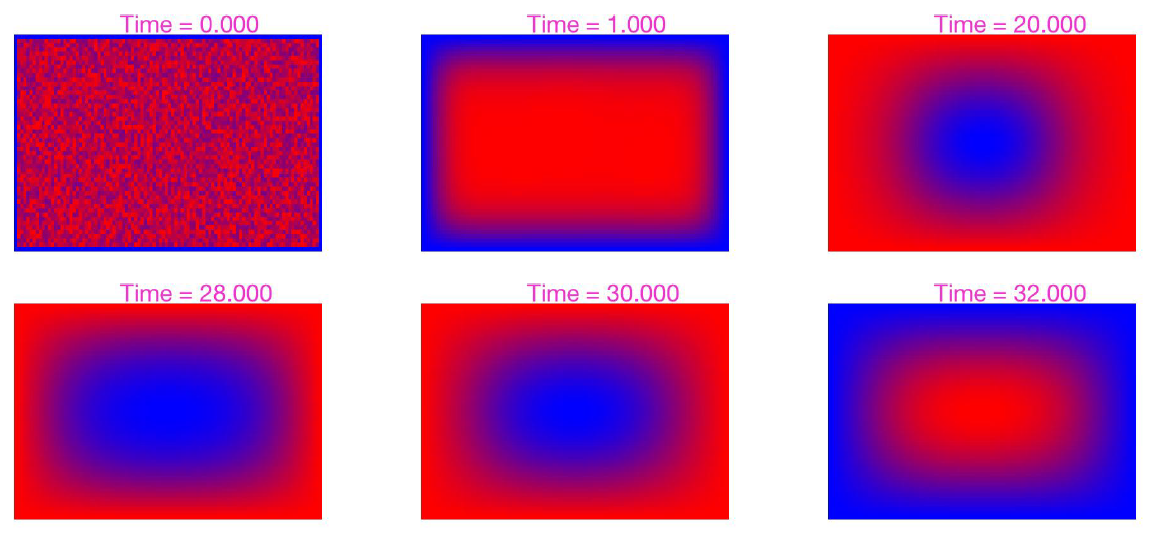}
 	\caption{Numerical simulation of the Schnakenberg system for case (a) when $d=1$ under Dirichlet boundary condition $(u,v) = (u_0,v_0)$ on $\partial \Omega$. The images shows temporal snapshots of spatial distributions of chemical $V$. In this case, the homogeneous steady state losses its stability and generates dynamics patterns (periodic solutions).}\label{fig:sch_unstable}
 \end{figure}

For case (b), however, we get
\begin{eqnarray}
D_{v}^{0}=55.1025, k_{c}^{0}=-0.1871
\end{eqnarray}
since $k_{c}^{0} < 0 < \inf_{k\ge0}{(\lambda_k)}$, the assumption \eqref{eq:sch_ass} does not apply. However, because $D=-2.5833, L = -0.5816<0$, the system must be stable. The numerical simulation of this case is illustrated in Fig. \ref{fig:sch_stable} below.
 \begin{figure}[!ht]
 	\centering
	\includegraphics[width = .8\textwidth]{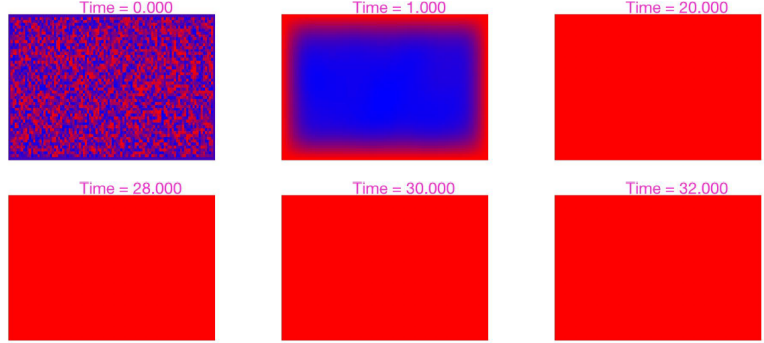}
 	\caption{Numerical simulation of the Schnakenberg system for case (b) when $d=1$ under Dirichlet boundary condition $(u,v) = (u_0,v_0)$ on $\partial \Omega$ and was purturbed. The images shows temporal snapshots of spatial distributions of chemical $V$. In this case, no Turing's instability occurs.}
	\label{fig:sch_stable}
 \end{figure}

%
%


\section{Discussion}
In this paper, we have studied the stability and dynamic transition property of Turing's systems, using the new dynamical transition theory developed in \cite{Ma2005,Ma2015}. Turing's systems are famous for the so-called \textit{diffusion-driven instabilities}, resulting emerging patterns as a result of bifurcations from the uniform steady state. Our analysis showed that only two bifurcation parameters are enough to dictate all the dynamics of the Turing's system near a steady state, as shown in Section \ref{sec:turing}. Besides, with the help of center manifold reduction procedure, we are able to \textit{locally} derive all the dynamical behavior of a infinite dimensional system near a steady state. 

Turing's instability is a widely studied topic in nonlinear science, lots of works had been done with respect to this. Most related works, however, mainly address on analyzing asymptotic behavior or numerical simulations, and mostly focus on a specific Turing's system, e.g. the Brusselator model, the Gray-Scott system, the Schnakenberg system etc., as introduced in Section \ref{sec:intro}. Our work, on the other hand, started from the general model \eqref{eq:model} and its analytical properties. From these analytical properties, by calculating critical parameters and bifurcated solutions, we found that Turing's instability can be recognized as a \textit{critical phenomena}, and consequently can be studied in a similar fashion as other physical phenomena like superconductivity, crystallization etc \cite{MaBook2015MPTP,Ma2015}. Dynamic transition theory is not only a fundamental method of studying critical phenomena, but also a powerful tool of studying dynamics of a PDE system.

The method introduced in this paper is systematic, and can be applied to various different kinds of two-component reaction diffusion systems (Turing's systems). In the future, we propose to build up a general method to study more complicated phase transition behavior of a reaction diffusion system, for example, second order phase transition and its biological and chemical implications. In general, chaotic behavior will take place for a physical system underwent second and higher order phase transitions under modern classification \cite{Kosterlitz1973,Sole1996,Ojovan2013,Ma2015}, and fruitful critical phenomena structures are found in numerous experiments for each order of transition. In the case of finite dimension, this kind of problems has been well studied from mathematical point of view \cite{WigginsBook2003,GuckenheimerBook2013}. Since most physical, chemical and biological phenomena are modeled by PDEs, accompanying with infinite dimension operators, constructing a systematic theory of studying second and higher order phase transition on infinite dimensional dynamical systems is of great importance for linking experimental results and mathematical theory, and that will be our next goal.

\bibliographystyle{plain}
\bibliography{Turing}

\end{document}